\DeclareMathAlphabet{\pazocal}{OMS}{zplm}{m}{n}
\numberwithin{equation}{section}
\numberwithin{equation}{subsection}
\theoremstyle{plain}
\newtheorem{theorem}[equation]{Theorem}
\newtheorem{lemma}[equation]{Lemma}
\newtheorem{proposition}[equation]{Proposition}
\newtheorem{corollary}[equation]{Corollary}
\newtheorem{question}[equation]{Question}
\theoremstyle{definition}
\newtheorem{example}[equation]{Example}
\newtheorem{remark}[equation]{Remark}
\newtheorem{definition}[equation]{Definition}
\newcommand{\et}{\EuScript{T}}
\newcommand{\bC}{{\mathbb C}}\newcommand{\bc}{{\mathbb C}}
\newcommand{\bZ}{{\mathbb Z}}
\newcommand{\calC}{{\mathcal C}}
\newcommand{\cV}{{\mathcal V}}
\newcommand{\cS}{{\mathcal S}}\newcommand{\calS}{{\mathcal S}}
\newcommand{\calO}{{\mathcal O}}
\newcommand{\cF}{{\mathcal F}}
\newcommand{\cH}{{\mathcal H}}
\newcommand{\calT}{\mathcal{T}}
\newcommand{\calL}{{\mathcal L}}
\newcommand{\frR}{\mathfrak{R}}
\newcommand{\frc}{\mathfrak{c}}
\newcommand{\bPE}{{\bf PE}}
\newcommand{\Lb}{\pazocal{L}}
\newcommand{\calQ}{{\mathcal Q}}
\newcommand{\frm}{\mathfrak{m}}
\newcommand{\C}{{\calc}}
\newcommand{\rank}{{\rm rank}\, }
\newcommand{\frakv}{\mathfrak{v}}
\newcommand{\bt}{{\bf t}}
\newcommand{\bH}{{\mathbb H}}
\newcommand{\calF}{{\mathcal F}}
\newcommand{\setP}{\mathbb{P}}
\newcommand{\hh}{\mathfrak{h}}
\newcommand{\pp}{\mathfrak{p}}
\newcommand{\calv}{\mathcal{V}}
\newcommand{\Z}{\mathbb{Z}}
\newcommand{\R}{\mathbb{R}}
\newcommand{\bS}{{\mathbb S}}
 \newcommand{\frX}{\mathfrak{X}}
\newcommand{\calo}{{\mathcal O}}
\newcommand{\calt}{{\mathcal T}}
\def\C{\mathbb C}
\def\R{\mathbb R}
\def\bH{\mathbb H}
\def\Z{\mathbb Z}
\newcommand{\Cc}{A^{-}}
\DeclareMathOperator{\HFL}{HFL}
\DeclareMathOperator{\gr}{gr}
\newcommand{\BF}{\mathbb{F}}
\author{Andr\'as N\'emethi}
\thanks{The author is partially supported by  ``\'Elvonal (Frontier)'' Grant KKP 144148}
\address{Alfr\'ed R\'enyi Institute of Math.,
Re\'altanoda utca 13-15, H-1053, Budapest, Hungary \newline
 \hspace*{3mm} ELTE - Univ. of Budapest, Dept. of Geo.,
 P\'azm\'any P\'eter s\'et\'any 1/A, 1117, Budapest, Hungary \newline \hspace*{3mm}
  BBU - Babe\c{s}-Bolyai Univ., Str, M. Kog\u{a}lniceanu 1, 400084 Cluj-Napoca, Romania
 \newline \hspace*{3mm}
BCAM - Basque Center for Applied Math.,
Mazarredo, 14 E48009 Bilbao, Basque Country, Spain}
\email{nemethi.andras@renyi.hu }
\title{Filtered lattice homology of curve singularities} 
\begin{document}

\keywords{}
\subjclass[2010]{Primary. 32S05, 32S10, 32S25, 57K18;
Secondary. 14Bxx, 57K10, 57K14}

\begin{abstract}
Let $(C,o)$ be a complex analytic isolated curve singularity of arbitrary large embedded dimension.
Its lattice cohomology  $\bH^*=\oplus_{q\geq 0}\bH^q$ was introduced in \cite{AgostonNemethi}, each
$\bH^q$ is a graded $\Z[U]$--module. Here we study its homological version $\bH_*(C,o)=\oplus_{q\geq 0}\bH_q$.
The construction uses the multivariable Hilbert function
associated with the valuations provided by the normalization of the curve.
A key intermediate product is  a tower of spaces
$\{S_n\}_{n\in \Z}$ such that $\bH_q=\oplus_n H_q(S_n,\Z)$.

In this article for every $n$ we consider a natural filtration of the space $S_n$, which provides
a homological spectral sequence converging to the homogeneous summand $H_q(S_n,\Z)$ of the lattice homology.
All the entries of all the pages of the spectral sequences are new invariants of $(C,o)$.
We show how the collection of the  first pages is equivalent with
the motivic Poincar\'e series of $(C,o)$.

We provide several concrete computations of the corresponding multivariable Poincar\'e series associated with the entries of the spectral sequences.

In the case of plane curve singularities, the first page can also be identified with the Heegaard Floer Link homology of the
link of the singularity. In this way, the new invariants provide   for an arbitrary (non necessarily plane) singularity a
homological theory which
is the analogue of the  Heegaard Floer Link theory for links of plane curve singularities.
\end{abstract}

\maketitle

\linespread{1.2}


\pagestyle{myheadings} \markboth{{\normalsize   A. N\'emethi}} {{\normalsize Filtered lattice homology of curve singularities}}

\section{Introduction}

\subsection{}
Let $(C,o)$ be a complex  analytic isolated singular germ of dimension one with $r$ irreducible components. In \cite{AgostonNemethi}
the authors associated with such a curve singularity the {\it analytic lattice cohomology} $\bH^*(C,o)$.
In this note we will consider the homological version, the analytic lattice homology $\bH_*(C,o)$,
with a very similar construction. The definition is based on the construction of a tower of spaces
$\emptyset=S_{m_w-1}\subset S_{m_w}\subset S_{m_w+1}\subset \cdots \subset S_n \subset \cdots$. Then the lattice homology has the form
$\bH_*(C,o)=\oplus_{q\geq 0}\bH_q(C,o)$, where each  $\bH_q(C,o):=\oplus_n H_q(S_n,\Z)$ is a $\Z$--graded $\Z$--module.
Its homogeneous elements $(\bH_q)_{-2n}$ consists of the elements of the summand $H_q(S_n,\Z)$. Moreover, $\bH_q(C,o)$ admits a homogeneous $U$--action
of degree $-2$, $H_q(S_{n-1},\Z)\to H_q(S_n,\Z)$, the homological morphism induced by the inclusion $S_{n-1}\hookrightarrow S_n$.
In this way, each $\bH_q(C,o)$ becomes a graded $\Z[U]$--module. The Euler characteristic of $\bH_*(C,o)$ is the {\it delta invariant} $\delta(C,o)$ of
$(C,o)$.

The construction uses the cubical decomposition of $\frX=(\R_{\geq 0})^r$ determined by  the lattice points $(\Z_{\geq 0})^r\subset
(\R_{\geq 0})^r$ and the canonical bases $\{E_i\}_{i=1}^r$ of $\Z^r$. Moreover, one also requires a weight function $w:(\Z_{\geq 0})^r\to\Z$. In the present case
 it is given by $w(l)=2\hh(l)-|l|$, where $l\mapsto \hh(l)$ is the Hilbert function of $(C,o)$ associated with
valuations given by the normalization map, and $|l|=|(l_1,\ldots , l_r)|=\sum_il_i$ (cf. section 3).

In this note, for any $n$  we consider an increasing filtration $\{S_n \cap \frX_{-d}\}_{d\geq 0}$ of the  space $S_n$.
It is canonically associated with the normalization map of $(C,o)$, therefore, any output of the filtration is a
well--defined  invariant of the singularity $(C,o)$.
Note that in the definition of the lattice homology it is enough to know the homotopy type of the tower of spaces $\{S_n\}_n$.
However, in the  filtration we use a more subtle information:   the embedding of the finite cubical complex $S_n$
into $(\R_{\geq 0})^r$. The filtration $\{S_n\cap \frX_{-d}\}_d$ is induced by the filtration
$\{\frX_{-d}\}_d$ of $(\R_{\geq 0})^r$, where $\frX_{-d}$ is the union of those cubes whose left-lower vertex  $l$ satisfies
$|l|\geq d$ (see section \ref{s:levfiltr}).
 By this construction we wish to emphasize once more the importance of the tower $\{S_n\}_n$
and to point out the structural  subtleties and riches of these (embedded) cubical complexes.

The filtration induces a homological spectral sequence $(E^k_{-d,q})_n\Rightarrow (E^\infty_{-d,q})_n$
 for every $n$, which stabilizes after finitely many steps.
 Its terms are the following:
  \begin{equation*}\begin{split}
  (E^1_{-d,q})_n=& H_{-d+q}(S_n\cap \frX_{-d}, S_n\cap \frX_{-d-1},\Z),\\
   (E^\infty_{-d,q})_n=& \frac{(F_{-d}\, \bH_{-d+q}(\frX))_{-2n}}
   { (F_{-d-1}\, \bH_{-d+q}(\frX))_{-2n}}=({\rm Gr}^F_{-d}\, \bH_{-d+q}(\frX)\,)_{-2n}.
   \end{split}\end{equation*}

In particular, all the information coded by the  pages of these spectral sequences are   invariants of $(C,o)$.
In this way, for any $k\geq 1$ we associate with $(C,o)$
the  triple graded $\Z$--module $(E^k_{-d,q})_n$.
We refer to the filtration given by $n$ as the `weight filtration',
given by $d$ as the `level filtration', and given by $b=-d+q$ as the `homological filtration'.

For an additional $\Z[Y_1,\ldots, Y_r]$--module
structure see paragraph \ref{bek:Y} and section \ref{s:Uoperators}.

Moreover, as further new invariants,
 we can consider those minimal values $k$ for which  $(E^k_{*,*})_n=(E^\infty_{*,*})_n$, see \ref{ss:ss},  (this is the analogue
of the $\tau$--invariant of Ozsv\'ath and Szab\'o \cite{OSztau}, or of the $s$--invariant of Rasmussen \cite{ras_s} in the context of
Heegaard Floer Link and Khovanov theories).

We show via examples that this minimal  $k$ for which $(E^k_{*,*})_n=(E^\infty_{*,*})_n$ for every $n$
can be large. In particular, all the pages of the spectral sequences might contain deep information.

The spectral sequence converges to the lattice homology $\bH_*$:
the $\infty$--pages (collected for all $S_n$)
  provide  ${\rm Gr}^F_*\bH_*$, the `graded lattice homology'.
Already the lattice homology has an interesting rich structure, but its triple graded version  ${\rm Gr}^F_*(\bH_*)_*$
contains considerable  additional information.

\subsection{}
From the spectral sequences we can also extract the  multivariable Poincar\'e series coding the corresponding
ranks of the entries:
 $$PE_k(T,Q,h):=\sum_{d,q,n} \ \rank (E_{-d,q}^k)_{n}\cdot T^dQ^nh^{-d+q}\in\Z[[T,Q]][Q^{-1},h].$$
 Regarding the $PE_\infty$ series we have the following structure theorem (see Proposition  \ref{prop:infty}).
\begin{theorem}\label{prop:infty_intro} \

\noindent (a) $PE_\infty(1, Q,h)=\sum_{n\geq m_w}\, \big(\, \sum_b \, {\rm rank}\, H_b(S_n,\Z)\,h^b\, \big)\cdot Q^n$ is the Poincar\'e series of \ $\bH_*(C,o)$.

\noindent (b) $PE_\infty(1, Q,-1)=\sum_{n\geq m_w}\, \chi_{top}(S_n)\cdot Q^n$ \ (where $\chi_{top}$ denoted the topological Euler characteristic)

\noindent (c) Let $R$ be any  rectangle  $\{x\, :\, 0\leq x\leq c'\}$ with $c'\geq c$, where $c$ is the conductor of $(C,o)$. Then
$$PE_\infty(1, Q,-1)=\frac{1}{1-Q}\cdot \sum_{\square_q\subset R}\, (-1)^q \, Q^{w(\square_q)}.$$
(This shows that this  Euler characteristic type invariant can be computed in two different ways: either homologically
--- \`a la Betti --- or by cell-decomposition --- \`a la Euler.)\\
(d) $$\lim_{Q\to 1}\Big( PE_\infty (1,Q, -1)-\frac{1}{1-Q}\Big)=eu(\bH_*(C,o))=\delta(C,o).$$
(e) For two series $P$ and $P'$ we write
 $P\geq P'$ if  $P-P'$ has all of its coefficients nonnegative. Then
 $$PE_\infty(1, Q,h)\geq PE_\infty(1, Q,h=0)\geq \frac{1}{1-Q}$$  and \
 $PE_\infty(1, Q,h=0)-1/(1-Q)$ is finitely supported.
\end{theorem}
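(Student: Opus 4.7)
The plan is to address the five parts in sequence, with parts (a), (b), (d), (e) following by direct unpacking once the main technical content — the cell-cancellation in (c) — is in hand.

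\textbf{Parts (a) and (b).} By convergence of the spectral sequence, for every fixed weight $n$ and homological degree $b = -d + q$ one has
\[
\sum_d \rank \bigl({\rm Gr}^F_{-d}\, \bH_{b}\bigr)_{-2n} = \rank (\bH_b)_{-2n} = \rank H_b(S_n, \Z).
\]
Setting $T = 1$ in $PE_\infty(T,Q,h)$ collapses the $d$-index and reproduces the Poincar\'e series of the total lattice homology, giving (a); the range $n \geq m_w$ is automatic because $S_n = \emptyset$ below $m_w$. Substituting $h = -1$ then gives (b) via $\sum_b (-1)^b \rank H_b(S_n, \Z) = \chi_{top}(S_n)$.

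\textbf{Part (c).} Since $S_n$ is a finite cubical subcomplex of $\frX$ with $\square \subset S_n \iff w(\square) \leq n$, the CW Euler-characteristic formula gives $\chi_{top}(S_n) = \sum_{\square \subset S_n} (-1)^{\dim \square}$; interchanging the two summations,
\[
\sum_n \chi_{top}(S_n)\, Q^n = \frac{1}{1-Q}\sum_{\square\subset \frX} (-1)^{\dim \square} Q^{w(\square)}.
\]
I would then show the sum over all cubes of $\frX$ equals the sum over cubes contained in $R = \{x : 0 \leq x \leq c'\}$, for any $c' \geq c$. Writing each cube as $(l, I)$ with top vertex $l + E_I := l + \sum_{i \in I} E_i$, the cube lies in $R$ iff $l + E_I \leq c'$. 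For $(l, I)$ \emph{not} in $R$, let $i = i(l, I)$ denote the smallest coordinate with $l_i + [i \in I] > c'_i$, and define
\[
\sigma(l, I) = \begin{cases}(l + E_i, I \setminus \{i\}) & \text{if } i \in I,\\(l - E_i, I \cup \{i\}) & \text{if } i \notin I.\end{cases}
\]
Routine verifications show $\sigma^2 = \mathrm{id}$, $\sigma$ preserves the complement of $R$, and $\dim \sigma(\square) = \dim \square \pm 1$; moreover the two paired cubes share the top vertex $l + E_I$.

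\textbf{The main obstacle} is the weight-preservation $w(\sigma(\square)) = w(\square)$. In case $i \notin I$, the enlarged partner $\sigma(l, I)$ differs from $(l, I)$ by the extra vertices $\{l + E_K - E_i : K \subset I\}$; since $l_i > c'_i \geq c_i$, each such vertex has its $i$-th coordinate equal to $l_i - 1 \geq c_i$, and the conductor property of the multivariable Hilbert function of $(C, o)$ (namely, $\hh$ increases by one in direction $i$ whenever the $i$-th coordinate is at or past $c_i$) gives $w(l + E_K - E_i) \leq w(l + E_K)$. Hence the maximum over the cube's vertices is unchanged by $\sigma$; the case $i \in I$ is symmetric, this time using the strict increase of $w$ in direction $i$ to force the maximum to be attained at some vertex with $i \in K$. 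Each $\sigma$-orbit thus contributes $(-1)^{\dim \square}(1 - 1) = 0$, so the alternating sum over cubes outside $R$ vanishes and (c) follows. For parts (d) and (e), stabilization of the tower (for $n \gg 0$, $S_n$ is nonempty and contractible, hence $\chi_{top}(S_n) = 1$ and $\rank H_0(S_n, \Z) = 1$) implies that $PE_\infty(1, Q, -1) - 1/(1-Q) = \sum_n (\chi_{top}(S_n) - 1) Q^n$ is a polynomial; evaluating at $Q = 1$ yields $\sum_n (\chi_{top}(S_n) - 1) = eu(\bH_*(C, o)) = \delta(C, o)$ by the identification recalled in the introduction, giving (d). For (e), nonemptiness of $S_n$ for $n \geq m_w$ gives $\rank H_0(S_n) \geq 1$ so $PE_\infty(1, Q, 0) \geq 1/(1-Q)$; stabilization makes the difference finitely supported; and $PE_\infty(1, Q, h) \geq PE_\infty(1, Q, 0)$ is immediate from nonnegativity of the ranks in positive homological degree.
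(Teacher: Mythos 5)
Parts (a), (b), (e) follow the same straightforward unpacking as the paper. Part (c) is proved by a genuinely different route. The paper sets $Eu(Q) := \sum_{\square_q\subset R}(-1)^qQ^{w(\square_q)}$, observes that the coefficient of $Q^n$ in $Eu(Q)/(1-Q)$ is $\chi_{top}(S_n\cap R)$, and then invokes the homotopy equivalence $S_n\cap R\hookrightarrow S_n$ from Theorem~\ref{cor:EUcurves}(a). You instead compute $\sum_n\chi_{top}(S_n)Q^n$ directly as $\frac{1}{1-Q}\sum_{\square\subset\frX}(-1)^{\dim\square}Q^{w(\square)}$ and cancel the contribution of cubes not contained in $R$ via a sign-reversing, weight-preserving involution $\sigma$. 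The weight-preservation hinges on the conductor property: for $l$ with $l_i\geq c_i$ one has $\hh(l+E_i)=\hh(l)+1$ (by Lemma~\ref{eq:semi} and $c+\Z^r_{\geq 0}\subset\calS$), so $w_0$ strictly increases in direction $i$ there, and the maximum of $w_0$ over a cube's vertices is attained on the common face. This argument is more elementary and self-contained: it proves, at the level of Euler characteristics, exactly the piece of Theorem~\ref{cor:EUcurves}(a) that is actually needed, rather than citing the stronger homotopy-equivalence statement. The involution and its weight-preservation check out.

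Part (d) has a bookkeeping slip when $m_w<0$, which is the typical situation (e.g., $m_w=-1$ for $x^3+y^4$). For $m_w\leq n<0$ the coefficient of $Q^n$ in $PE_\infty(1,Q,-1)-\frac{1}{1-Q}$ is $\chi_{top}(S_n)$, not $\chi_{top}(S_n)-1$, because $\frac{1}{1-Q}=\sum_{n\geq 0}Q^n$ contributes nothing in negative degree. So your intermediate identity $PE_\infty(1,Q,-1)-\frac{1}{1-Q}=\sum_n(\chi_{top}(S_n)-1)Q^n$, and the ensuing claim $\sum_n(\chi_{top}(S_n)-1)=eu(\bH_*(C,o))$, are each off by the $-m_w$ summand appearing in the definition of $eu$ in \ref{bek:eu}; for the cusp $x^3+y^4$ your formula would give $2$ rather than $\delta=3$. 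The fix is to keep the $m_w\leq n<0$ range explicit, which restores the $-m_w$ and yields the correct limit; alternatively, one can argue as the paper does, combining part (c) with $\sum_{\square_q\subset R}(-1)^q=1$ and the identity~(\ref{eq:eu}).
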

\subsection{} The series $PE_1(T,Q,h)$ can be
 enhanced by a richer multivariable  series. Indeed, we consider for any lattice point $l\in (\Z_{\geq 0})^r$
the shifted first quadrant $\frX_{-l}:= \{x\in (\R_{\geq 0})^s\,:\, x\geq l\}$ and
the set of cubes in it (i.e. those cubes with left-lower vertex $\geq l$).
Then we have the natural direct sum decomposition
\begin{equation*}
 (E^1_{-d,q})_{n}= \bigoplus_{l\in\Z^r_{\geq 0},\, |l|=d}\
 (E^1_{-l,q})_{n}, \ \mbox{where} \ (E^1_{-l,q})_{n}:=
 H_{-|l|+q}(S_n\cap \frX_{-l}, S_n\cap \frX_{-l}\cap \frX_{-|l|-1},\Z)\end{equation*}
 and the corresponding Poincar\'e series
 $${\bPE}_1({\bf T}, Q,h)={\bPE}_1(T_1, \ldots , T_r, Q,h):=\sum_{l\in\Z^r_{\geq 0},\, n,q}\ \rank\big((E^1_{-l,q})_{n}\big)\cdot
 T_1^{l_1}\cdots T_r^{l_r}\, Q^n\, h^{-|l|+q}. $$
 It satisfies   ${\bPE}_1(T_1=T,\ldots, T_r=T, Q,h)=PE_1(T,Q,h)$.

Rather surprisingly, the series $\bPE_1({\bf T}, Q,h)$ is related with another analytic invariant, defined in a very different
way and context in \cite{cdg3}, the {\it motivic Poincar\'e series}
 $P^m({\bf t}, q)$ of the singularity. In Theorem 5.1.3 we show that
$\bPE_1({\bf T}, Q,h)$ and  $P^m({\bf t}, q)$ determine each other by a very explicit procedure
(see Theorem \ref{th:PP} for a more precise statement and connections with other invariants).
\begin{theorem}\label{th:PP_intro}
(a)
$$\bPE_1({\bf T}, Q,h)|_{T_i\to t_i\sqrt{q},\ Q\to \sqrt{q},\ h\to -\sqrt{q}}= P^m({\bf t}, q).$$

(b) Write  $P^m(\bt,q)$ as $\sum_l\pp^m_l(q)\cdot\bt^l$.  Then each $\pp^m_l(q)$
can be written in a unique way in  the form
$$\pp^m_l(q)=\sum_{k\in\Z_{\geq 0} }\pp^m _{l,k}q^{k+\hh(l)}, \ \ (\pp^m_{l,k}\in\Z).$$

(c) Write $P^m(\bt,q)=\sum_l\ \sum_{k\in\Z_{\geq 0} }\pp^m _{l,k}q^{k+\hh(l)}\cdot \bt^l$. Then
$$\bPE_1({\bf T}, Q,h)= \sum_l\ \sum_{k\in\Z_{\geq 0} }\pp^m _{l,k}\ {\bf T}^l Q^{w(l)}\cdot (-Qh)^k.$$

(d)\ $\bPE_1$ is a rational function of type
$$\overline{\bPE_1}({\bf T}, Q, h)/\prod_i(1-T_iQ), \ \ \mbox{where} \ \ \overline{\bPE_1}({\bf T}, Q, h)\in\Z[{\bf T}, Q, Q^{-1}, h].$$


(e) In the Gorenstein case, after substitution $h=-Q$ one has a  symmetry of type
 $$\overline{\bPE_1}({\bf T}, Q, h=-Q)|_{T_i\to T_i^{-1}}=\prod_i T_i^{-c_i}\cdot \overline{\bPE_1}({\bf T}, Q, h=-Q).$$
\end{theorem}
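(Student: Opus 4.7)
The plan is to prove (c) first --- from which (a) follows by direct substitution and (b) is a structural corollary --- and then to transport the rationality and Gorenstein symmetry of $P^m(\bt,q)$ from \cite{cdg3} along the dictionary of (a) to obtain (d) and (e). The first step is a local cubical model for $(E^1_{-l,q'})_n$: the quotient chain complex $C_*(S_n\cap \frX_{-l})/C_*(S_n\cap \frX_{-l}\cap \frX_{-|l|-1})$ is supported on cubes $\square_I(l)$ whose lower-left vertex is exactly $l$ and which satisfy $w(\square_I(l))\le n$, with the induced differential retaining only the `lower' faces (the upper ones being quotiented out). Setting $\mathcal{I}(l,n):=\{I\subseteq\{1,\dots,r\}\,:\,w(\square_I(l))\le n\}$ (down-closed by monotonicity of $w$), the group $(E^1_{-l,q'})_n$ is the $(q'-|l|)$-th homology of the resulting finite cubical `corner complex' $K^{(l)}_n\subseteq[0,1]^r$.

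Using $w(\square_I(l))=\max_{J\subseteq I}\,(2\hh(l+E_J)-|l|-|J|)$ and tracking how each cube enters the filtration at $n=w(\square_I(l))$, one carries out an inclusion-exclusion on $\mathcal{I}(l,n)$ to compute the triple-graded Poincar\'e polynomial of $K^{(l)}_n$. The crucial structural output is a `diagonal concentration' property --- that $(E^1_{-l,q'})_n\neq 0$ forces $n-w(l)=q'-|l|$ --- which collapses the generating function onto a single-variable polynomial in $-Qh$:
\[
\sum_{n,q'}\mathrm{rank}\,(E^1_{-l,q'})_n\cdot Q^n h^{q'-|l|}\ =\ Q^{w(l)}\sum_{k\ge 0}a_{l,k}(-Qh)^k
\]
for explicit nonnegative integers $a_{l,k}$. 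Comparing with the Hilbert-function description of $\pp^m_l(q)$ from \cite{cdg3} --- itself an alternating sum in $\hh$ over the vertices $l+E_J$ of the cube at $l$ --- yields $a_{l,k}=(-1)^k\pp^m_{l,k}$, which is exactly the formula in (c).

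Part (a) then follows by applying the substitution $T_i\to t_i\sqrt{q},\,Q\to\sqrt{q},\,h\to -\sqrt{q}$ to (c): using $w(l)+|l|=2\hh(l)$, each term ${\bf T}^l Q^{w(l)+k}(-h)^k$ maps to $\bt^l q^{\hh(l)+k}$, recovering $P^m(\bt,q)=\sum_l\pp^m_l(q)\bt^l$. Part (b) is then forced: the vertex $\square_\emptyset(l)$ enters $S_n$ at $n=w(l)$, contributing $\pp^m_{l,0}=\mathrm{rank}\,(E^1_{-l,|l|})_{w(l)}\ge 1$, so $q^{\hh(l)}$ is the minimal $q$-power in $\pp^m_l(q)$ and the expansion $\pp^m_l(q)=\sum_k\pp^m_{l,k}q^{\hh(l)+k}$ with $\pp^m_{l,k}\in\Z$ is unique. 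Part (d) combines (c) with the rationality theorem of \cite{cdg3} (which writes $P^m=\tilde{P}^m/\prod_i(1-t_iq)$ with $\tilde{P}^m$ polynomial): under the dictionary from (a), the denominator $\prod_i(1-t_iq)$ becomes $\prod_i(1-T_iQ)$, producing $\overline{\bPE_1}\in\Z[{\bf T},Q,Q^{-1},h]$ after the necessary $q$-power shifts. Part (e) is the analogous transport of the Gorenstein functional equation for $P^m(\bt,q)$ (the motivic semigroup duality of \cite{cdg3}, featuring a conductor shift $\prod t_i^{-c_i}$): specializing $h=-Q$ lands on the consistency slice $q=Q^2,\,t_i=T_i/Q$ of the dictionary, on which the Gorenstein involution pulls back to $T_i\mapsto T_i^{-1}$ up to the stated factor $\prod T_i^{-c_i}$.

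The main obstacle is the diagonal-concentration step of Step 2: proving that $(E^1_{-l,q'})_n$ vanishes outside the line $n-w(l)=q'-|l|$ is what makes the collapse onto a one-variable polynomial in $-Qh$ possible in the first place. Establishing it cleanly requires combining the local homotopy type of $K^{(l)}_n$ (a cubical set filtered by the weight $I\mapsto w(\square_I(l))$) with the parity/monotonicity specific to the curve-singularity Hilbert function --- in particular, the fact that $w$ changes by an odd integer along each coordinate edge --- which rigidifies the weight distribution across the cube at $l$ and forces the observed concentration; this is the point at which the curve hypothesis is essential.
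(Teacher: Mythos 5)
Your plan is close in spirit to the paper's proof, and parts (a), (c), (d), (e) would indeed follow once the \emph{diagonal concentration} is established. But the concentration step --- the one you yourself flag as the main obstacle --- is exactly where the proposal has a genuine gap, and the mechanism you invoke for it is insufficient.

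You propose to derive the vanishing ``$(E^1_{-l,q'})_n\neq 0 \Rightarrow n-w(l)=q'-|l|$'' from the parity/monotonicity of the curve Hilbert function, i.e.\ from $w(l+E_i)-w(l)\in\{\pm1\}$. This is a property only of the edge increments of $w$, and it does \emph{not} by itself control the homology of the corner complex $K^{(l)}_n$. The actual input needed is the full matroidal/submodular structure of the function $I\mapsto\hh(l+E_I)$: since $\calF(l+E_I)=\bigcap_{i\in I}\calF(l+E_i)$, the map $I\mapsto \hh(l+E_I)-\hh(l)$ is the rank function of a central hyperplane arrangement inside $\calF(l)/\calF(l+E_{\cV})$. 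The paper's route is to identify the relative cubical complex $C_*(S_n\cap\frX_{-l},\,S_n\cap\frX_{-l}\cap\frX_{-|l|-1})$ with a graded piece of the local lattice complex $({\rm gr}_l\calL^-,{\rm gr}_l\partial_U)$ of \cite{GorNem2015}, using precisely the identity $w((l,I))-w(l)=\hh((l,I))-\hh(l)$, and then to invoke Theorem~\ref{zlat}(3), whose proof rests on the structure of the Orlik--Solomon algebra of the arrangement $\setP\cH'(l)$ (and, via Theorem~\ref{zlat}(2), on the identification of the local lattice homology with $H_*(\setP\cH'(l),\Z)$). The $\pm1$ edge-step property is a shadow of this structure but is strictly weaker: it is shared by many weight assignments on the cube that do not come from arrangements, so a purely parity-based argument cannot force the concentration. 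Until this step is filled in, the ``explicit nonnegative integers $a_{l,k}$'' of your Step 2 are not available --- inclusion-exclusion on $\cI(l,n)$ only yields Euler characteristics, and converting those into ranks is exactly what the concentration is needed for.

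With that step replaced by the $({\rm gr}_l\calL^-)$-identification and the citation of \cite[Thms.\ 4.2.1, 5.3.1]{GorNem2015}, the remainder of your proposal matches the paper: the substitution in (a), the uniqueness in (b) (minimality of the power $q^{\hh(l)}$, here coming from $\pp^m_{l,0}={\rm rank}\,H_0\ne 0$ for $l\in\calS$), the re-indexing in (c), and the transport of rationality and of the Gorenstein functional equation (Prop.~\ref{prop:motProp}(b),(d), with $2\delta=|c|$) through the monomial dictionary $T_i\mapsto t_i\sqrt q,\ Q\mapsto\sqrt q,\ h\mapsto-\sqrt q$ to obtain (d) and (e).
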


The identification of $\bPE_1({\bf T}, Q,h)$ with $P^m({\bf t}, q)$ is
based on the following (non)vanishing result:
$$\mbox{if $H_{b}(S_n\cap \frX_{-l}, S_n\cap \frX_{-l}\cap\frX_{-|l|-1})\not=0$ then necessarily
$n=w(l)+b$.}$$
This (non)vanishing follows from another identification of the modules
$H_{b}(S_n\cap \frX_{-l}, S_n\cap \frX_{-l}\cap\frX_{-|l|-1})$: in \cite{GorNem2015}
they appear under the name `local lattice cohomology', and they were studied via
 certain hyperplane arrangements and their Orlik--Solomon algebras (see here subsection 3.3).
In fact, the present work can be considered as a completion of \cite{GorNem2015}
by the  construction in the algebraic case of the object
whose `localization' is the (co)homology studied in \cite{GorNem2015}.

It is worth to mention that
 the article \cite{GorNem2015} introduced in the topological context the analogue
 of the Hilbert function of curve singularities, called the $H$--function,
 which generated a rather intense activity in topology, see e.g.
 \cite{GH,GLM,GLLM,BLZ} and the references therein.
We definitely expect that all the results of the present note can be extended to the purely topological context, if we replace the Hilbert function of algebraic singularities with the $H$--function of links in $S^3$ (or, in $L$--spaces), hence the (filtered)
tower of spaces
$\{S_n\}_n$ considered in this note with the corresponding tower associated with the topological $H$--function and the weight function $w(l)=2H(l)-|l|$.

\subsection{}\label{bek:Y} {\bf The $\Z[Y_1,\ldots, Y_r]$--module structure.} \
Along the spectral sequence the $U$--action (induced by the inclusions $S_{n-1}\hookrightarrow S_n$)
is trivial. However, we introduce new homological operators $Y_1,\ldots, Y_r$
induced by the lattice shifts $x\mapsto x+E_i$. Each $Y_i$
 is a spectral sequence morphism connecting  the spectral sequences of $S_n$ and  $S_{n+1}$:
  $$Y_i: (E^k_{-d, q})_{n} \to (E^k_{-d-1, q+1})_{n+1}.$$
 This new action usually is non--trivial;
 in fact, even on the $\infty$--pages (i.e. on  ${\rm Gr}^F_*\bH_*$)
the  information carried by them
  is more subtle than the information coded by $U$--action acting on  $\bH_*$.

\subsection{}
In the body of the article we discuss several key examples and  several families of germs. E.g.,
we analyse with several details the case of irreducible germs and decomposable singularities (as two extremal cases).
We also stress the additional properties valid for  plane curve singularities.
\subsection{}
The reader familiar with the {\bf Heegaard Floer Link theory} will realize strong similarities with the structure of  that theory and the
results of the present note. However, we wish to emphasise that the two setups are different. In the HFL theory one
constructs topological invariants associated with the embedded topological type of a link  $L$ of the three sphere  $S^3$,
while in our case we construct analytic invariants using the rigid analytic structure of a {\it not-necessarily plane curve singularity}.

{\bf However,  in the case of plane curve singularities the two theories meet!}
Indeed, for such germs, it turns out that the Hilbert function (which is used to define the
analytic lattice cohomology) can be recovered from the embedded topological type of the link.
In particular, in these cases, all the invariants what we construct (lattice homology, the spaces $S_n$,
their filtrations, the spectral sequences) are invariants of the embedded topological type.
E.g., Theorem \ref{th:PP_intro} gets an additional  new colour: $P^m(\bt,q)=P(\bt)$ can be identified with the multivariable Alexander
polynomial of the link, cf. Theorem \ref{Poincare vs Alexander}.

More surprisingly, the first page of the collection of spectral sequences
can be identified with the Heegaard Floer Link homologies HFL$_*^-(L)=\sum_{l\in (\Z^r_{\geq 0})}
{\rm HFL}_*^-(L,l)$ of the corresponding link $L\subset S^3$.
Nevertheless, by the present theory we create an extra `partition' of  HFL$_*^-(L)$ given by $n$.
The connecting bridge is the following.
\begin{equation}\label{eq:lb2_intro}
\HFL^-_{-n-|l|}(L,l)\simeq H_b(S_n\cap\frX_{-l}, S_n\cap\frX_{-l}\cap \frX_{-|l|-1}),
\end{equation}
where $b=n-w(l)$. For any fixed $d$, summation over $\{l\,:\, |l|=d\}$ gives for any $n,\, b$ and $d$:
\begin{equation}\label{eq:lb3_intro}
\bigoplus_{|l|=d,\ w(l)=n-b}
\HFL^-_{-n-|l|}(L,l)\simeq \bigoplus_{|l|=d}\,
H_b(S_n\cap\frX_{-l}, S_n\cap\frX_{-l}\cap \frX_{-|l|-1})= (E^1_{-d, b+d})_n.
\end{equation}
In particular, for any fixed $n$, the spectral sequence associated with $S_n$
uses and capture only the specially chosen summand $\oplus_{l}{\rm HFL}^-_{-n-|l|}(L,l)$ of
$\oplus _l{\rm HFL^-}_*(L,l)$. This is a partition of $\oplus _l{\rm HFL^-}_*(L,l)$ indexed by $n$.
Each $\oplus_{l}{\rm HFL}^-_{-n-|l|}(L,l)$, interpreted as an $E^1$--term, converges to
$({\rm Gr}^F_*\bH_*)_{-2n}$. (This is not the spectral sequence of the  Heegaard Floer Link theory,
which converges to $HF^-(S^3)$, though the {\it entries}   --- but not
the differentials --- of the first pages can be identified).

In this  identification we use again  \cite{GorNem2015}.

In this way, our theory in fact provides for any analytic  (not necessarily plane) curve singularity
the analogue of HFL$^-_*$ (where this later one
 is constructed  only for plane germs, or for links in $S^3$). In the case of  higher embedded dimension,
the information coded in the embedded topological type needed  for the definition of HFL$^-_*$ is replaced by
information read from the analytic rigidity of the analytic germ.
(However, nota that the two constructions are very-very different.)


\subsection{} {\bf Enhances grading supported by the graded root.} \
All the constructions have an additional subtlety. Recall that the core of all the constructions is the tower of spaces $\{S_n\}_{n\geq m_w}$. Now, each $S_n$ can be considered as the disjoint union of its connected components $\sqcup_v S_n^v$. They are indexed by the vertices $\cV(\mathfrak{R})$ of the graded root
$\mathfrak{R}(C,o)$, see \ref{ss:grroot}. On the other hand, for each $n$, the filtration
$\{S_n\cap \frX_{-d}\}_d$ also decomposes into a disjoint union
$\sqcup_{v\in\cV(\mathfrak{R}),\, w_0(v)=n}\{S_n^v\cap \frX_{-d}\}_d$. In particular, all the spectral sequence (and all its outputs) split according to this decomposition indexed by $\cV(\mathfrak{R})$. That is, all the invariants which originally were graded by $\{n\in\Z\,:\, n\geq m_w\}$ will have a refined grading given by the
vertices of the graded root.

In particular, the graded root $\mathfrak{R}$ supports 
the following invariants partitioned as decorations of the  vertices $\cV(\mathfrak{R})$:
$\bH_*(C,o)$, $PE_k(T,Q,h)$, ${\bf PE}_1({\bf T},Q,h)$, the actions $Y_i$ (guided by the edges of
  $\mathfrak{R}$), and $\oplus_l{\rm HFL}^-_*(L, l)$.
  For details see subsections \ref{ss:grroot}, \ref{ss:decroot}, or  equations (\ref{eq:yi2}) and   (\ref{eq:v}).

For another case in the literature when some important invariants (series)
 appear as decorations of a graded root  see \cite{AJK}.

\subsection{} {\bf The structure of the article is the following.}
Section 2 reviews the general definition of the lattice homology. Section 3 provides the definition of the lattice homology associated with
isolated curve singularities. In section 4 we also provide several invariants of the singular germ (Hilbert function, semigroup,
motivic Poincar\'e series, local lattice homology of \cite{GorNem2015}) and we discuss several relationships connecting them.
The case of plane curves is also highlighted. In section 5 we introduce and discuss the (level) filtration and the
corresponding  spectral sequences associated with any fixed space  $S_n$. Section 6 contains an improvement of this level filtration:
the page $E^1_{*,*}$ carries a lattice filtration which makes the invariants more subtle, and the
corresponding multivariable  Poincar\'e series. In section 7 we introduce the operators $Y_1,\ldots, Y_r$.
Section 8 treats plane curve singularities and we
compare our theory with the Heegaard Floer Link theory.

In section 9 we suggest some other possibilities as level filtrations, which work equally well, and should be developed in the future.

\section{The definition of the lattice homology}\label{ss:latweight}

 \subsection{The lattice homology associated with  a system of weights} \cite{Nlattice}

\bekezdes
 We consider a free $\Z$--module, with a fixed basis
$\{E_i\}_{i\in\calv}$, denoted by $\Z^s$. It is also convenient to fix
a total ordering of the index set $\calv$, which in the
sequel will be denoted by $\{1,\ldots,s\}$.

The lattice homology construction associates
 a graded $\Z[U]$--module with the
pair $(\Z^s, \{E_i\}_i)$ and a set of weights.
The construction follows closely the construction of the lattice cohomology developed in \cite{Nlattice} (for more see also \cite{NOSz,NGr,Nkonyv}).
In particular, we will not prove all the statements, the corresponding modifications are rather natural.

\bekezdes\label{9zu1} {\bf $\Z[U]$--modules.}  We will modify the usual grading of the polynomial
ring $\Z[U]$ in such a way that the new degree of $U$ is $-2$.
Besides $\calt^-_0:=\Z[U]$, considered as a graded $\Z[U]$--module,  we will consider the modules
$\calt_0(n):=\Z[U]/(U^n)$ too with the induced grading.
 Hence, $\calt_0(n)$, as a $\Z$--module, is freely
generated by $1,U^1,\ldots,U^{n-1}$, and has finite
$\Z$--rank $n$.

More generally, for any graded $\Z[U]$--module $P$ with
$d$--homogeneous elements $P_d$, and  for any  $k\in\Z$,   we
denote by $P[k]$ the same module graded in such a way
that $P[k]_{d+k}=P_{d}$. Then set $\calt^-_k:=\calt^-_0[k]$ and
$\calt_k(n):=\calt_0(n)[k]$. Hence, for $m\in \Z$,
$\calt_{-2m}^-=\Z\langle U^{m}, U^{m+1},\ldots\rangle$ as a $\Z$-module.

\bekezdes\label{9complex} {\bf The chain complex.}
$\Z^s\otimes \R$ has a natural cellular decomposition into cubes. The
set of zero-dimensional cubes is provided  by the lattice points
$\Z^s$. Any $l\in \Z^s$ and subset $I\subset \calv$ of
cardinality $q$  defines a $q$-dimensional cube  $(l, I)$, which has its
vertices in the lattice points $(l+\sum_{i\in I'}E_i)_{I'}$, where
$I'$ runs over all subsets of $I$. On each such cube we fix an
orientation. This can be determined, e.g.,  by the order
$(E_{i_1},\ldots, E_{i_q})$, where $i_1<\cdots < i_q$, of the
involved base elements $\{E_i\}_{i\in I}$. The set of oriented
$q$-dimensional cubes defined in this way is denoted by $\calQ_q$
($0\leq q\leq s$).

Let $\calC_q$ be the free $\Z$-module generated by oriented cubes
$\square_q\in\calQ_q$. Clearly, for each $\square_q\in \calQ_q$,
the oriented boundary $\partial \square_q$ (of `classical'  cubical homology) has the form
$\sum_k\varepsilon_k \, \square_{q-1}^k$ for some
$\varepsilon_k\in \{-1,+1\}$.
These $(q-1)$-cubes $\square_{q-1}^k$ are the {\em faces} of $\square_q$.

Clearly,  $H_*(\calC_*, \partial_*)=H_*(\R^s,\Z)$.
In order to define a `non-trivial'  homology theory of these cubes,
 we consider a set of compatible {\em weight
functions} $\{w_q\}_q$.

\begin{definition}\label{9weight}  A set of functions
$w_q:\calQ_q\to \Z$  ($0\leq q\leq s$) is called a {\em set of
compatible weight functions}  if the following hold:

(a) For any integer $k\in\Z$, the set $w_0^{-1}(\,(-\infty,k]\,)$
is finite;

(b) for any $\square_q\in \calQ_q$ and for any of its faces
$\square_{q-1}\in \calQ_{q-1}$ one has $w_q(\square_q)\geq
w_{q-1}(\square_{q-1})$.
\end{definition}

\bekezdes\label{bek:grading}
In the presence of any  fixed set of compatible weight functions
$\{w_q\}_q$  we set  ${\Lb}_q:=\calC_q\otimes_{\Z}\Z[U]$.
Note that $\Lb_q$ is a $\Z[U]$-module by
$U*(U^m\square_q):= U^{m+1}\square$.
Moreover, $\Lb_q$ has a $\Z$--grading: by definition the degree
of $U^m\square$ is ${\rm deg}_{\Lb}(U^m\square):=-2m-2w(\square)$.
 In fact, the grading is $2\Z$--valued; we prefer  this convention in order to keep
the compatibility  with (Link)  Heegaard  Floer theory.

We define $\partial_{w,q}:\Lb_{q}\to \Lb_{q-1}$ as follows.  First write
$\partial\square_{q}=\sum_k\varepsilon_k \square ^k_{q-1}$, then
set
$$\partial_{w,q}(U^m\square_{q}):=U^m\sum_k\,\varepsilon_k\,
U^{w(\square_{q})-w(\square^k_{q-1})}\, \square^k_{q-1}.$$
Then one shows that
$\partial_w\circ\partial_w=0$, i.e.
$(\Lb_*,\partial_{w,*})$ is a chain complex.

\bekezdes Next we define an augmentation of $(\Lb_*,\delta_{w,*})$. Set $m_w:=\min_{l\in \Z^s}w_0(l)$ and define
the $\Z[U]$-linear map
$\epsilon_w:\Lb_0\to  \calt^-_{-2m_w}$ by $\epsilon_w(U^ml)=U^{m+w_0(l)}$,
 for any $l\in \Z^s$ and  $m\geq 0$. Then
 $\epsilon_w$ is surjective,
$\epsilon_w\circ \partial_w=0$, and
 $\epsilon_w$ and $\partial_w$
are  homogeneous morphisms of $\Z[U]$--modules  of degree
zero.

\begin{definition}\label{9def12} The homology of the chain complex
$(\Lb_*,\partial_{w,*})$ is called the {\em lattice homology} of the
pair $(\R^s,w)$, and it is denoted by $\bH_*(\R^s,w)$. The
homology of the augmented chain complex
$$\ldots\stackrel{\partial_w}{\longrightarrow} \Lb_1
\stackrel{\partial_w}{\longrightarrow}
\Lb_0\stackrel{\epsilon_w}{\longrightarrow}\calt^-_{-2m_w}\longrightarrow 0
$$ is called the {\em
reduced lattice homology} of the pair $(\R^s,w)$, and it is
denoted by $\bH_{*,red}(\R^s,w)$.
\end{definition}

If the pair $(\R^s,w)$ is clear
from the context, we omit it from the notation.

For any $q\geq 0$ fixed,
 the $\Z$--grading of $\Lb_q$ induces a $\Z$--grading
on $\bH_q$ and $\bH_{q,red}$; the homogeneous part of degree $d$
is denoted by $(\bH_{q})_{d}$, or $(\bH_{q,red})_{d}$.
Moreover,
both $\bH_q$ and $\bH_{red,q}$ admit an induced graded
$\Z[U]$--module structure and $\bH_q=\bH_{q,red}$ for $q>0$.

\begin{lemma}\label{9lemma3}  One has a graded
$\Z[U]$--module isomorphism
$\bH_0=\calt^-_{-2m_w}\oplus\bH_{0,red}$.\end{lemma}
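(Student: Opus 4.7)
The plan is to exhibit the claimed decomposition as a split short exact sequence of graded $\Z[U]$-modules. First I would observe that because $\epsilon_w$ is surjective, vanishes on $\im(\partial_{w,1})$, and is homogeneous of degree zero, it descends to a homogeneous surjective $\Z[U]$-module map $\bar\epsilon_w\colon \bH_0 \to \calt^-_{-2m_w}$ whose kernel is precisely
\[
\ker(\epsilon_w)/\im(\partial_{w,1}) \;=\; \bH_{0,red},
\]
by the definition of reduced lattice homology. This yields the short exact sequence
\[
0 \longrightarrow \bH_{0,red} \longrightarrow \bH_0 \xrightarrow{\ \bar\epsilon_w\ } \calt^-_{-2m_w} \longrightarrow 0
\]
in the category of graded $\Z[U]$-modules.

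Next I would construct a splitting. By condition (a) of Definition \ref{9weight}, the set $w_0^{-1}(m_w)$ is nonempty and finite, so we may choose any $l_0 \in \Z^s$ with $w_0(l_0) = m_w$. Viewed as an element of $\Lb_0$, one has $\deg_{\Lb}(l_0) = -2w_0(l_0) = -2m_w$, and by the defining formula $\epsilon_w(l_0) = U^{m_w}$, which is exactly the distinguished $\Z[U]$-module generator of $\calt^-_{-2m_w}$ sitting in degree $-2m_w$. Since $\calt^-_{-2m_w}$ is a free cyclic graded $\Z[U]$-module on this generator, there is a unique graded $\Z[U]$-module homomorphism
\[
\sigma\colon \calt^-_{-2m_w} \longrightarrow \bH_0, \qquad \sigma(U^{m_w}) := [l_0],
\]
and it is automatically homogeneous of degree zero. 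The identity $\bar\epsilon_w \circ \sigma = \mathrm{id}$ holds on the generator, hence on all of $\calt^-_{-2m_w}$ by $\Z[U]$-linearity. Therefore the sequence splits, giving the asserted graded $\Z[U]$-module isomorphism $\bH_0 \cong \calt^-_{-2m_w} \oplus \bH_{0,red}$.

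There is no serious obstacle here; the only mild point is the bookkeeping of gradings, namely verifying that the chosen lift $[l_0]$ lies in the same homogeneous component ($-2m_w$) as the target generator so that $\sigma$ is degree-preserving, which is immediate from $\deg_{\Lb}(l_0) = -2w_0(l_0) = -2m_w$. Freeness of $\calt^-_{-2m_w}$ as a cyclic $\Z[U]$-module is what makes the splitting available without any further hypothesis; the choice of $l_0$ is not canonical, but any such choice produces a valid splitting.
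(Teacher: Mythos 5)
Your proof is correct and is exactly the direct argument the paper has in mind: the paper states the lemma without proof, but immediately before it records precisely the ingredients you use (surjectivity of $\epsilon_w$, $\epsilon_w\circ\partial_w=0$, and homogeneity of degree zero), and the choice of a minimum-weight lattice point as a splitting generator mirrors the base-point remark the paper makes in \ref{9rem}. The degree bookkeeping ($\deg_{\Lb}(l_0)=-2m_w$ matching the degree of $U^{m_w}$ in $\calt^-_{-2m_w}$), the identification $\ker(\bar\epsilon_w)=\bH_{0,red}$, and the use of freeness of the cyclic $\Z[U]$-module to build the section are all sound.
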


\bekezdes\label{9rem} Next, we present another realization of the
modules $\bH_*$.
For each $n\in \Z$ we
define $S_n=S_n(w)\subset \R^s$ as the union of all
the cubes $\square_q$ (of any dimension) with $w(\square_q)\leq
n$. Clearly, $S_n=\emptyset$, whenever $n<m_w$. For any  $q\geq 0$, set
$$\bS_q(\R^s,w):=\oplus_{n\geq m_w}\, H_q(S_n,\Z).$$
Then $\bS_q$ is $\Z$ (in fact, $2\Z$)--graded: the
$(-2n)$--homogeneous elements $(\bS_q)_{-2n}$ consist of  $H_q(S_n,\Z)$.
Also, $\bS_q$ is a $\Z[U]$--module; the $U$--action is the homological morphism
$H_q(S_{n},\Z)\to H_q(S_{n+1},\Z)$ induced by the inclusion $S_n\hookrightarrow S_{n+1}$.
Moreover, for
$q=0$, a fixed base-point $l_w\in S_{m_w}$ provides an augmentation
(splitting)
 $H_0(S_n,\Z)=
\Z\oplus \widetilde{H}_0(S_n,\Z)$, hence a splitting of the graded
$\Z[U]$-module
$$\bS_0=\calt^-_{-2m_w}\oplus \bS_{0,red}=(\oplus_{n\geq m_w}\Z)\oplus (
\oplus_{n\geq m_w}\widetilde{H}_0(S_n,\Z)).$$

\begin{theorem}\label{9STR1} \ (For the cohomological version see
\cite{Nlattice},\cite[Theorem 11.1.12]{Nkonyv})
There exists a graded $\Z[U]$--module isomorphism, compatible with the
augmentations:
$\bH_*(\R^s,w)=\bS_*(\R^s,w)$.
%
%
\end{theorem}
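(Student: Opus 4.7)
The plan is to prove the isomorphism by restricting attention, for each fixed $n\in\Z$, to the $(-2n)$--homogeneous part of the chain complex $(\Lb_*,\partial_{w,*})$ and identifying it with the cubical chain complex of $S_n$. Everything is then just the direct sum over $n$.

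First I would observe that, by construction, a generator $U^m\square_q\in\Lb_q$ has degree $-2m-2w(\square_q)$, so it is $(-2n)$--homogeneous exactly when $m=n-w(\square_q)$, which forces $w(\square_q)\le n$, i.e.\ $\square_q\subset S_n$. Conversely, every cube $\square_q\subset S_n$ gives rise to a unique generator $U^{n-w(\square_q)}\square_q$ in degree $-2n$. Thus the map
$$\Phi_n:\ \bigl(\Lb_q\bigr)_{-2n}\ \xrightarrow{\ \sim\ }\ C_q^{cub}(S_n,\Z),\qquad U^{n-w(\square_q)}\square_q\mapsto \square_q,$$
is an isomorphism of $\Z$--modules (for each $q\ge 0$, with the convention $C_q^{cub}(S_n)=0$ if $n<m_w$).

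Next I would check that $\Phi:=\oplus_q\Phi_n$ intertwines $\partial_w$ (restricted to the degree $-2n$ part) with the ordinary cubical boundary $\partial^{cub}$. Writing $\partial\square_q=\sum_k\varepsilon_k\square^{k}_{q-1}$, the definition gives
$$\partial_w\bigl(U^{n-w(\square_q)}\square_q\bigr)=\sum_k\varepsilon_k\,U^{n-w(\square^k_{q-1})}\square^k_{q-1}.$$
Compatibility (Definition \ref{9weight}(b)) guarantees $w(\square^k_{q-1})\le w(\square_q)\le n$, so each face $\square^k_{q-1}$ lies in $S_n$ and each exponent is nonnegative; moreover each term is again $(-2n)$--homogeneous (so $\partial_w$ indeed preserves degree). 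Applying $\Phi_n$ collapses this to $\sum_k\varepsilon_k\,\square^k_{q-1}=\partial^{cub}\square_q$. Hence, for every fixed $n$, $\Phi$ is a chain isomorphism $((\Lb_*)_{-2n},\partial_w)\cong (C_*^{cub}(S_n),\partial^{cub})$, so on homology
$$(\bH_q)_{-2n}\ \cong\ H_q(S_n,\Z)=(\bS_q)_{-2n}.$$

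It remains to verify $\Z[U]$--linearity and augmentation compatibility. The $U$--action sends $U^{n-w(\square_q)}\square_q\in(\Lb_q)_{-2n}$ to $U^{n+1-w(\square_q)}\square_q\in(\Lb_q)_{-2(n+1)}$; under $\Phi$ this is precisely the operation of viewing the cubical chain $\square_q\subset S_n$ inside $S_{n+1}$ via $S_n\hookrightarrow S_{n+1}$, matching the $U$--action on $\bS_*$. For the augmentation, on a $0$--cube $l\in S_n$ one has $\epsilon_w(U^{n-w_0(l)}l)=U^n$, independent of $l$, which under $\Phi$ is the standard augmentation $C_0^{cub}(S_n)\to\Z$ giving the splitting $H_0(S_n)=\Z\oplus\widetilde H_0(S_n)$. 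Summing over $n\ge m_w$ and invoking Lemma \ref{9lemma3} and the analogous splitting of $\bS_0$ yields the theorem.

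The main obstacle I expect is not conceptual but bookkeeping: one must be careful that the weighted differential really does preserve the homogeneous parts (which forces the degree conventions $\deg U=-2$ and $\deg\square=-2w(\square)$ to be exactly the ones chosen), and that the orientation/sign conventions $\varepsilon_k$ in $\partial_w$ coincide with those of the standard cubical boundary so that $\Phi$ is a chain map rather than merely a map of graded modules. Beyond this, the argument is a cube--by--cube translation and the homological version differs from the cohomological one of \cite{Nlattice} only by reversing arrows.
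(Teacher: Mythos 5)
Your proof is correct and follows essentially the same route as the paper's sketch: decompose $(\Lb_*,\partial_{w,*})$ into its $(-2n)$--homogeneous summands, identify each with the cubical chain complex of $S_n$ via $U^{n-w(\square)}\square\mapsto\square$, and observe that multiplication by $U$ corresponds to the inclusion $S_n\hookrightarrow S_{n+1}$. You supply a little more bookkeeping on the chain-map and augmentation checks than the paper's terse sketch, but the argument is the same.
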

\noindent
Let us sketch the proof of  this isomorphism, since several versions of it will be used later.
 Let $(\Lb_*)_{-2n}$ denote the $\Z$-module of homogeneous elements of degree $(-2n)$ of $\Lb_*$, cf. \ref{bek:grading}.
 Since  $\partial_{w,*}$ preserves the homogeneity, the
chain complex $(\Lb_*,\partial_{w,*})$ has the following  direct sum decomposition $\oplus_n   ((\Lb_*)_{-2n},\partial_{w,*})$.
 We claim that $((\Lb_*)_{-2n},\partial_{w,*})$
can be identified with the chain complex $(\calC_*(S_n),\partial_*(S_n))$ of the space $S_n$.
Indeed, if $U^m\square_q\in (\Lb_q)_{-2n}$ is a homogeneous element, then
$-2m-2w_q(\square_q)=-2n$. Since $m\geq 0$,
necessarily $w(\square_q)\leq n$, hence $\square_q\in S_n$.
Then the correspondence $U^m\square_q\mapsto \square_q$, $(\Lb_*)_{-2n}\to \calC_*(S_n)$, is a morphism of chain complexes
which  realizes the wished  $\Z$--module isomorphism.

Choose  $\square\in S_n$. It corresponds to $U^m\square$  in $(\Lb_*)_{-2n}$, where $m =n-w(\square)$.
Let $u\square $ be $\square$  considered in $S_{n+1}$. Then, as a cube of $S_{n+1}$ it corresponds to
$U^{m+1}\square \in (\Lb_*)_{-2n-2}$ (since $m+1=n+1-w(\square)$). Hence the morphism
$H_q(S_{n},\Z)\to H_q(S_{n+1},\Z)$ corresponds to multiplication by $U$ in $\Lb_*$.

\subsection{Restrictions and the Euler characteristic}\label{ss:REu}

\bekezdes\label{9SSP} {\bf Restrictions.}  Assume that $\frR\subset \R^s$ is a subspace
of $\R^s$ consisting of a union of some cubes (from $\calQ_*$). Let
$\calC_q(\frR)$ be the free $\Z$-module generated by $q$-cubes of
$\frR$ and  $\Lb_q(\frR)=\calC_*(\frR)\otimes_{\Z}\Z[U]$.  Then
$(\Lb_*(\frR),\partial_{w,*})$ is a  chain complex, whose homology will be
denoted by $\bH_*(\frR,w)$. It has an augmentation and a  natural graded $\Z[U]$--module structure.

In some cases it can happen that the weight functions are defined only for cubes belonging to $\frR$.

Some of the possibilities (besides $\frR=\R^s$) used in  the present note are  the following:

(1) $\frR=(\R_{\geq 0})^s$ is the first quadrant and weight functions are defined for its  cubes;

(2) $\frR=R(0,c)$ is the rectangle $\{x\in\R^s \,:\, 0\leq x\leq c\}$,  where $c\geq 0$ is a lattice point;

(3) $\frR=\{x\in\R^s \,:\,  x\geq l\}$ for some fixed $l\in (\Z_{\geq0})^s$.

\bekezdes \label{bek:eu}{\bf The Euler characteristic of $\bH_*$  \cite{JEMS}.}
Though
$\bH_{*,red}(\R^s,w)$ has a finite $\Z$-rank in any fixed
homogeneous degree, in general, it is not
finitely generated over $\Z$, in fact, not even over $\Z[U]$.

Let $\frR$ be  as in \ref{9SSP} and assume that each $\bH_{q,red}(\frR,w)$ has finite $\Z$--rank.
(This happens automatically when $\frR$ is a finite rectangle.)
We define the Euler  characteristic of $\bH_*(\frR,w)$ as
$$eu(\bH_*(\frR,w)):=-\min\{w(l)\,:\, l\in \frR\cap \Z^s\} +
\sum_q(-1)^q\rank_\Z(\bH_{q,red}(\frR,w)).$$
If $\frR=R(0, c)$ (for a lattice point $c\geq 0$), then by  \cite{JEMS},
\begin{equation}\label{eq:eu}
\sum_{\square_q\subset \frR} (-1)^{q+1}w(\square_q)=eu(\bH_*(\frR,w)).\end{equation}

\section{The analytic lattice homology of curves}\label{ss:curveslattice}

\subsection{The definition and first properties of the lattice homology}

\bekezdes \label{bek:ANcurves} {\bf Some classical invariants of curves.}
Let $(C,o)$ be  an isolated curve singularity with local algebra $\calO=\calO_{C,o}$. Let
$\cup_{i=1}^r(C_i,o)$ be the irreducible decomposition of $(C,o)$, denote the local algebra of
$(C_i,o)$ by $\calO_i$. We denote the integral closure of $\calO_i$ by $\overline{\calO_i}= \C\{t_i\} $,
and we consider $\calO_i$ as a subring of $\overline{\calO_i}$. Similarly, we denote
 the integral closure of $\calO$ by $\overline{\calO}= \oplus_i \C\{t_i\}$.
 Let $\delta=\delta(C,o)$ be the delta invariant $\dim _\C\, \overline{\calO}/\calO$ of $(C,o)$.

We denote by $\frakv_i: \overline{\calO_i}\to \overline{\Z_{\geq 0}}=\Z_{\geq 0}\cup\{\infty\}$ the discrete valuation of
$\overline{\calO_i}$, where $\frakv_i(0)=\infty$. This restricted to $\calO_i$ reads as $\frakv_i(f)=
{\rm ord}_{t_i}(f)$  for $f\in \calO_i$.

  Let $\calS_i=\frakv_i(\calO_i)\cap \Z_{\geq 0}\subset \Z_{\geq 0}$ and
$\calS=(\frakv_1,\ldots, \frakv_r)(\calO)\cap (\Z_{\geq 0})^r$, or
$$\calS=\{\frakv(f):=(\frakv_1(f),\ldots, \frakv_r(f))\,:\, f \ \mbox{is a nonzero  divisor}\}\subset (\Z_{\geq 0})^r.$$
It is called the {\it semigroup of $(C,o)$}.
Let ${\mathfrak c}=(\calO:\overline{\calO})$  be the conductor ideal of $\overline{\calO}$, it is the largest ideal
of $\calO$, which is an ideal of $\overline{\calO}$ too. It has the form
$(t_1^{c_1}, \ldots, t_r^{c_r})\overline{\calO} $. The element $c=(c_1,\ldots , c_r)$ is called the
conductor of $\calS$. From definitions $c+\Z^r_{\geq 0}\subset \calS$ and $c$ is the smallest lattice point
with this property (whenever $(C,o)$ is not smooth). If $r=1$ then $\delta(C,o)=|\{\Z_{\geq0}\setminus \calS\}|$
(otherwise $|\{\Z_{\geq0}\setminus \calS\}|=\infty$).

Assume that $(C,o)$ is the union of two (not necessarily irreducible)
germs $(C',o)$ and $(C'',o)$ without common components,
and fix some embedding
$(C,o)\subset (\C^n,0)$. One defines the {\it Hironaka's intersection multiplicity }
of $(C',o)$ and $(C'',o)$ by $(C',C'')_{Hir}:=\dim ( \calO_{\C^n,o}/ I'+I'')$, where
$I'$ and $I''$ are the ideals which define $(C',o)$ and $(C'',o)$. Then one has the following formula
\cite{Hironaka,BG80,Stevens85}:
\begin{equation}\label{eq:Hir}
\delta(C,o)=\delta(C',o)+\delta(C'',o)+ (C',C'')_{Hir}.\end{equation}

From this it follows inductively that $\delta(C,o)\geq r-1$. In fact, $\delta(C,o)= r-1$
if and only if $(C,o)$ is analytically equivalent with the union of the coordinate axes of
$\C^r,0)$, called {\it ordinary $r$-tuple} \cite{BG80}.

 For {\bf plane curve germs}  $(C',C'')_{Hir}$ agrees with the usual
intersection multiplicity at $(\C^2,0)$.
In this case, the conductor entry is
$c_i=2\delta(C_i,o)+ (C_i,\cup_{j\not=i} C_j)$.
For a formula of $c_i$ in this general case see \cite{D'Anna}.
(For some additional inequalities see also the end of \ref{bek:AnnFiltr}.)

\bekezdes \label{bek:AnnFiltr} {\bf The valuative filtrations.}
We will focus on two vector spaces; the first is the infinite dimensional  local algebra $\calO$, the second one is
the finite dimensional $\overline{\calO}/\calO$.

Consider the lattice $\Z^r$ with its natural basis $\{E_i\}_i$ and partial ordering. If $l=(l_1, \ldots, l_r)\in \Z^r$
we set $|l|:=\sum_il_i$.
Then
$\overline{\calO}$ has a natural filtration indexed by $l\in \Z^r_{\geq 0}$ given by
$\overline{\calF}(l):=\{g\,:\, \frakv(g)\geq l\}$. This induces an ideal filtration of
$\calO$ by $\calF(l):=\overline{\calF}(l)\cap \calO\subset \calO$, and also a filtration
$$\calF^\circ (l)=\frac{\overline{\calF}(l)+\calO}{\calO}=\frac{\overline{\calF}(l)}{\overline{\calF}(l)\cap\calO}\subset
\overline {\calO}/\calO.$$
The first filtration of $\calO$ is `classical', it
 was considered e.g. in \cite{cdg,cdg2,Moyano,NPo,AgostonNemethi}, see also \cite{GLS,GLS2b}.
 The second filtration provides a `multivariable sum-decomposition' of $\delta$.

Set $\hh(l)=\dim \calO/\calF(l)=\dim (\overline{\calF}(l)+\calO)/\overline{\calF}(l)$.
Then  $\hh$ is increasing and $\hh(0)=0$. Moreover, we also set
$\overline{\hh} (l):= {\rm codim}( \calF^\circ (l)\subset \overline{\calO}/\calO)=\dim(\overline{\calO}/(\overline{\calF}(l)+\calO))$.
$\overline{\hh}$ is also increasing, $\overline{\hh}(0)=0$,
and $\overline{\hh}(l)=\delta$ for any $l\geq c$ (since $\overline {\calF}(l)\subset
{\mathfrak c}\subset \calO$ for such $l$).  Finally set $\hh^\circ (l):=\delta-\overline {\hh}(l)$.
One has  \begin{equation}\label{eq:DUAL1}
\hh(l)+\overline{\hh}(l)=\dim\, \overline{\calO}/\overline{\calF}(l)=|l|.
\end{equation}
Assume that $(C,o)$ is not smooth.
Since $\dim(\overline{\calO}/\frc)=|c|$ and $\dim(\calO/\frc)=\hh(c)$ we have $\delta =|c|-\hh(c)$. Since
$\hh(c)\geq  1$, $\delta \leq |c|-1$, with equality if and only if $\frc$ is the maximal ideal of $\calO$.
On the other hand, $\delta\geq |c|/2$ too, with equality if and only if $(C,o)$ is Gorenstein (cf. \cite[page 72]{Serre},
see also \ref{bek:GORdualoty}).

\bekezdes \label{bek:ANweightsCurves} {\bf The weights and the analytic lattice homology } (For the cohomology version see \cite{AgostonNemethi}.)
We consider the  lattice $\Z^r$ with its fixed basis $\{E_i\}_{i=1}^r$, and the functions $h$ and $h^\circ$ defined in \ref{bek:AnnFiltr}.
We also set $\cV:=\{1, \ldots, r\}$.
For the construction of the analytic lattice (co)homology of $(C,o)$ we consider only the first quadrant, namely
the lattice points $(\Z_{\geq 0})^r$ in $\frX:=(\R_{\geq 0})^r$ and the cubes from $\frX$.
The weight function on the lattice points $(\Z_{\geq 0})^r$ is defined by
$$w_0(l)=\hh(l)+\hh^\circ (l)-\hh^\circ(0)=\hh(l)-\overline{\hh}(l)= 2\cdot \hh(l)-|l|, $$
and $w_q(\square):=\max\{w_0(l)\,:\, \mbox{$l$ is a vertex of $\square$}\}$.
Then the cubical decomposition of $\frX$ and $l\mapsto w_0(l)=2\hh(l)-|l|$
define a lattice homology $\bH_*(\frX,w)$.
It is denoted by $\bH_*(C,o)$.

From the very construction we get
\begin{equation}\label{eq:VANISH}
\bH_q(C,o)=0 \ \mbox{for any $q\geq r$.}
\end{equation}
\begin{theorem}\label{cor:EUcurves} \cite{AgostonNemethi}
(a) For  any $c'\geq c$ the inclusion $S_n\cap R(0,c')\hookrightarrow S_n$ is a homotopy equivalence.
In particular, $S_n$ is contractible for $n\gg 0$.

(b)  One has a graded $\Z[U]$--module isomorphism  $\bH_*(C,o)=\bH_*(R(0,c'),w)$ for any $c'\geq c$
induced by the natural inclusion map. Therefore, $\bH_*(C,o)$ is determined by the weighted  cubes
of the rectangle $R(0,c)$ and $\bH_{*,red}(C,o)$ has finite $\Z$--rank.

(c) $eu(\bH_*(C,o))=\delta(C,o)$, that is,
$\bH_*(C,o)$ is a `categorification' of \,$\delta(C,o)$.
\end{theorem}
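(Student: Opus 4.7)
The plan rests on a monotonicity lemma for $w_0(l)=2\hh(l)-|l|$: if $l_i\geq c_i$, then $\hh(l+E_i)-\hh(l)=1$, hence $w_0(l+E_i)=w_0(l)+1$. I would prove this as follows. The difference $\dim\calF(l)/\calF(l+E_i)$ always lies in $\{0,1\}$ since the quotient embeds into the one-dimensional $\overline{\calF}(l)/\overline{\calF}(l+E_i)$, and it equals $1$ iff there exists $f\in\calO$ with $\frakv(f)\geq l$ and $\frakv_i(f)=l_i$. Such $f$ is available: the lattice point $l_iE_i+\sum_{j\neq i}\max(l_j,c_j)E_j$ dominates $c$ coordinatewise, hence lies in $\calS$ by $c+\Z_{\geq 0}^r\subset\calS$, so any function realizing this valuation vector supplies the required $f$.

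For part (a), fix $c'\geq c$ and $i\in\{1,\ldots,r\}$. For any cube $\square\subset\frX$ whose base vertex $l$ satisfies $l_i\geq c'_i+1$, every vertex of $\square-E_i$ has $i$-th coordinate $\geq c'_i\geq c_i$, so the lemma guarantees that each vertex weight drops by exactly $1$ under translation by $-E_i$, whence $w(\square-E_i)=w(\square)-1\leq n$ whenever $\square\subset S_n$. This produces a weight-nonincreasing cubical deformation retraction collapsing $S_n\cap\{x_i\geq c'_i+1\}$ one step at a time onto $S_n\cap\{x_i=c'_i\}$; cycling through $i=1,\ldots,r$ retracts $S_n$ onto $S_n\cap R(0,c')$. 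Contractibility for $n\gg 0$ follows because once $n\geq\max_{l\in R(0,c')}w_0(l)$ the intersection $S_n\cap R(0,c')$ equals the contractible rectangle $R(0,c')$, and the retraction transports this to $S_n$.

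Part (b) is an immediate consequence: the level-wise homotopy equivalences $S_n\cap R(0,c')\hookrightarrow S_n$ are compatible with the inclusions $S_{n-1}\hookrightarrow S_n$ defining the $U$-action and with the weighted cubical structure, so they induce a graded $\Z[U]$-module isomorphism $\bH_*(R(0,c'),w)\cong\bH_*(C,o)$; since $R(0,c')$ has only finitely many cubes, $\bH_{*,red}$ has finite $\Z$-rank. For part (c) I would apply (\ref{eq:eu}) to $R=R(0,c)$:
\[
eu(\bH_*(C,o))=\sum_{\square_q\subset R(0,c)}(-1)^{q+1}w(\square_q),
\]
and evaluate the right-hand side by induction on $r$. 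In the base case $r=1$ one has $w([l,l+1])=w_0(l)+(\hh(l+1)-\hh(l))$, so the alternating sum telescopes to $\hh(c)-w_0(c)=|c|-\hh(c)=\delta$, the last equality coming from $\hh(c)=\dim\calO/\frc=|c|-\delta$. For $r\geq 2$, the monotonicity lemma pinpoints the vertex at which $w(\square)$ is attained whenever $\square$ crosses the conductor boundary in a given coordinate direction, which permits a fiberwise telescoping collapsing the $r$-dimensional sum to an $(r-1)$-dimensional one, reducing to the inductive hypothesis.

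\textbf{Main obstacle.} The combinatorial identity in (c) is the main hurdle, because $w(\square)=\max_v w_0(v)$ has no single canonical maximizing vertex; the monotonicity lemma is precisely what forces the max to sit at a prescribed vertex once $\square$ crosses the conductor boundary, which is what makes the telescoping collapse the $r$-variable alternating sum down to the scalar $\delta=|c|-\hh(c)$.
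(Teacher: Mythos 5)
Your monotonicity lemma is correct and is exactly the right tool for part (a), which in turn gives (b). The retraction you describe can be packaged cleanly as the coordinatewise truncation $\Phi(x)=(\min(x_1,c'_1),\ldots,\min(x_r,c'_r))$ with its straight-line homotopy; your lemma guarantees that each step $v\mapsto v-E_i$ (applied to a vertex with $v_i\geq c'_i+1\geq c_i+1$) strictly lowers $w_0$, so $S_n$ is never left and the retraction is legitimate. Part (b) then follows as you say.

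The gap is in part (c) for $r\geq 2$. Your $r=1$ telescoping is correct, but it secretly uses the identity $w([l,l+1])=w_0(l)+\bigl(\hh(l+1)-\hh(l)\bigr)$, which is the $1$-dimensional instance of the relation $w\bigl((l,I)\bigr)=w_0(l)+\hh(l+E_I)-\hh(l)$ (equation \eqref{eq:relweights} in this paper). You never establish the $r\geq 2$ version, and the monotonicity lemma alone does \emph{not} supply it: the lemma only controls $\hh(l+E_i)-\hh(l)$ when $l_i\geq c_i$, whereas the alternating sum over $R(0,c)$ is dominated by cubes strictly inside the conductor rectangle. Moreover your stated premise --- that monotonicity ``pinpoints the vertex at which $w(\square)$ is attained'' --- is simply false for interior cubes: for $(l,I)$ with $\hh(l+E_I)=\hh(l)$ one has $w\bigl((l,I)\bigr)=w_0(l)$ while $w_0(l+E_I)=w_0(l)-|I|<w_0(l)$, so the maximizing vertex is $l$, not $l+E_I$, and in general it depends on $l$ in a way the lemma cannot see. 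Consequently the ``fiberwise telescoping collapsing the $r$-dimensional sum to an $(r-1)$-dimensional one'' is not justified; the intermediate sums that appear do not reduce to lattice data of a smaller curve. The clean route is to substitute $w\bigl((l,I)\bigr)=\hh(l)+\hh(l+E_I)-|l|$ directly into $\sum_{\square\subset R(0,c)}(-1)^{\dim\square+1}w(\square)$; the three resulting alternating sums over $(l,I)$ collapse by the usual $\sum_{I}(-1)^{|I|}=0$ cancellation to $-w_0(c)+\hh(c)=|c|-\hh(c)=\delta$ in one stroke, with no induction on $r$. Note that \eqref{eq:relweights} itself is not a consequence of your monotonicity lemma but of the Orlik--Solomon/linear-algebra fact that $\max_{I'\subset I}\bigl(2a(I')-|I'|\bigr)=a(I)$ for the increment function $a(I')=\hh(l+E_{I'})-\hh(l)$; that is the piece of input your proposal is missing.
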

The ranks can be coded in
 the following  Poincar\'e series
$$PH(Q,h):= \sum_{n,b}\, {\rm rank} \ H_b(S_n,\Z)\,Q^nh^b=\sum_{n,b}\, {\rm rank} \ (\bH_b)_{-2n}\,Q^nh^b.$$

\subsection{The homological graded root}\label{ss:grroot}

There is an important enhancement of $\bH_0(C,o)$, the homological graded root ${\mathfrak R}(C,o)$ associated with $(C,o)$.
It is the natural homological version of the {\it cohomological graded root} already present in the literature.
 For the general construction  and
 its relation with $\bH^0$ of the cohomological root see e.g. \cite{NGr,Nlattice,Nkonyv}.

The homological root  is a connected tree whose vertices are graded by $\Z$. The vertices graded by $n\in\Z$ correspond to the connected components  $\{S_n^v\}_v$ of $S_n$. If a component $S_n^v$ of $S_n$ and a component
$S_{n+1}^u$ of $S_{n+1}$ satisfy $S_n^v\subset S_{n+1}^u$ then we introduce an edge $[v,u]$ in
${\mathfrak{R}}(C,o)$ connecting $v$ and $u$. There is a natural way to read from ${\mathfrak R}(C,o)$ the lattice homology $\bH_0(C,o)$ together with the $U$--action (see e.g. Example \ref{ex:34}).
The vertices of ${\mathfrak {R}}(C,o)$ will be denoted by $\cV({\mathfrak R})$.

 The lattice homology can be regarded as a `decoration' of ${\mathfrak {R}}(C,o)$. Indeed,
 $\bH_*(C,o)$ has a direct sum decomposition $\oplus_n \bH_*(C,o)_{-2n}$ according to the
 grading of $\bH_*$ (weights of the cubes). The homogeneous part $(\bH_*)_{-2n}$ equals $H_*(S_n,\Z)$.
 The point is that for each $S_n$ we can consider its connected components $\{S_n^v\}_v$; they are indexed  by the set of vertices of $\mathfrak{R}$ with $w_0(v)=n$. In particular, we can decorate each vertex $v\in\cV(\mathfrak{R})$  with $H_*(S^v_{w_0(v)},\Z)$.
 This shows that
 $$\bH_*(C,o)=\oplus_{v\in \cV(\mathfrak{R})} \, H_*(S^v_{w_0(v)},\Z).$$
 This introduces an additional (enhanced) grading of $\bH_*$. Namely,  the $\Z$ grading of
 $\bH_*$ (supported on $\{m_w,m_w+1,\ldots\}$)
 is replaced by the index set $\cV( {\mathfrak{R}})$, i.e. $\bH_*=
 \oplus_{v\in \cV(\mathfrak{R})} \, (\bH_*)^v_{-2w_0(v)}$.

 The $U$--action of $\bH_*$ also extends to a (graded) $U$--action of
 $ \oplus_{v\in \cV(\mathfrak{R})} \, (\bH_*)^v_{-2w_0(v)}$. Indeed, an  inclusion
 $S_n^v\subset S_{n+1}^u$  induces $U:  (\bH_*)^v_{-2w_0(v)}\to  (\bH_*)^u_{-2w_0(u)}$.

 This `homological  decoration' can be simplified if we replace $(\bH_*)^v_{-2w_0(v)}$ by its Poincar\'e
 polynomial $PH^v_{-2w_0(v)}(h)=\sum_b \, {\rm rank}\, (\bH_b)^v_{-2w_0(v)}\, h^b$.
 Then for each $n\in\Z$ the sum $\sum_{v\,:\, w_0(v)=n}PH^v_{-2n}(h)$ is the coefficient of $Q^n$ in
 $PH(Q,h)$.

\subsection{Rosenlicht's forms}\label{rem:Rosenlicht}
We have the following reinterpretation of  $\hh^\circ$ in terms of forms.

Write  $n:\overline{(C,o)}\to (C,o)$ for the normalization.
Let $\Omega^1(*)$ be the germs of meromorphic differential forms on the normalization
$\overline{(C,o)}$  with a pole (of any order) at most in $\overline {o}=n^{-1}(o)$.
Let  $\Omega^1_{\overline{(C,o)}}$ be the germs of regular differential forms on $\overline{(C,o)}$.
 The {\it Rosenlicht's regular differential forms}
  are defined as
 $$\omega^R_{C,o}:=\{ \alpha\in \Omega^1(*)\,:\, \sum _{p\in \overline{o}} {\rm res}_p(f\alpha)=0 \ \
 \mbox{for all $f\in \calO$}\}.$$
 (In fact one shows that it is canonically isomorphic with the dualizing module of Grothendieck associated with $(C,o)$.)
 Then, by \cite{Serre,BG80},  one has a perfect duality between $\omega^R_{C,o}/ n_*\Omega^1_{\overline{(C,o)}}$ and
 $\overline{\calO}/\calO=n_*\calO_{\overline{(C,o)}}/ \calO_{C,o}$:
 \begin{equation}\label{eq:ROS}
 n_*\calO_{\overline{(C,o)}}/ \calO_{C,o}\ \times \ \omega^R_{C,o}/ n_*\Omega^1_{\overline{(C,o)}}\to \C,\ \ \
 [f]\times [\alpha]\mapsto \sum_{p\in\overline{o}} \, {\rm res}_p(f\alpha).\end{equation}
Moreover,  one can define a $\Z^r$--filtration in  $\omega^R_{C,o}/ n_*\Omega^1_{\overline{(C,o)}}$
 such that the duality is compatible with the  filtrations in
 $n_*\calO_{\overline{(C,o)}}/ \calO_{C,o}= \overline{\calO}/\calo$ and
 $\omega^R_{C,o}/ n_*\Omega^1_{\overline{(C,o)}}$.

\bekezdes \label{bek:GORdualoty} {\bf The Gorenstein case.}
By Serre \cite{Serre} or Bass  \cite{Bass} (see also \cite{Huneke}) $(C,o)$ is Gorenstein if and only if
$\dim (\overline{\calO}/\calO)=\dim(\calO/\mathfrak{c})$. On the other hand, Delgado in \cite{delaMata}
proved that the condition $\dim (\overline{\calO}/\calO)=\dim(\calO/\mathfrak{c})$ is equivalent with the symmetry of the semigroup of values
$\calS$. If  $r=1$ then the symmetry can be formulated easily:
$l\in\calS\ \Leftrightarrow \ c-1-l\not\in\calS$. If $r\geq 2$ then the definition is the following \cite{delaMata}.
For any $l \in \Z^r$ and $i\in\{1,\cdots, r\}$ set
$$\Delta_i(l)=\{s\in\calS\, :\,
s_i=l_i \ \mbox{and} \ s_j>l_j \ \mbox{for all} \ j\not= i\}, \ \ \mbox{and} \ \  \Delta(l):=\cup_i\Delta_i(l).$$
Then $\calS$ is called symmetric if $l\in\calS\ \Leftrightarrow \ \Delta(c-{\bf 1}-l)=\emptyset$. (Here ${\bf 1}=(1,\ldots, 1)$.)

If $(C,o)$ is Gorenstein then in \cite{cdk} (see also \cite{Moyano}) is proved that
$$\hh(l)-\hh(c-l)=|l|-\delta.$$
This combined with (\ref{eq:DUAL1}) gives
$\hh(c-l)=\hh^\circ(l)$. In particular, $\hh^\circ$ is recovered from $\hh$ as  its symmetrization
with respect to $c$, namely from $h^{sym}(l):=\hh(c-l)$ for any $l\in R(0, c)$.

In particular, the weight function and $\bH_*(C,o)$ admits a $\Z_2$--symmetry induced by $l\mapsto c-l$.

\subsection{The `hat'--version $\hat{\bH}$.}\label{ss:hat}
Once the tower of  spaces $\{S_n\}_n$ is  defined, it is natural to consider the relative homologies as well:
$$\hat{\bH}_b(C,o)=\hat{\bH}_b(\frX,w):= \oplus_{n}\, H_b(S_n,S_{n-1},\Z).$$
It is a graded $\Z$--module, with $(-2n)$--homogeneous summand $H_b(S_n,S_{n-1},\Z)$. The $U$--action
induced by the inclusion of pairs $(S_{n}, S_{n-1})\hookrightarrow (S_{n+1},S_{n})$  is trivial.
By Theorem \ref{cor:EUcurves},  $ H_b(S_n,S_{n-1},\Z)\not=0$ only for finitely many pairs $(n,b)$.
By a  homological argument (based on the contractibility of $S_n$ for $n\gg 0$, cf. Theorem \ref{cor:EUcurves}, see also
Remark \ref{rem:hat})
\begin{equation}\label{eq:hateu}
\sum_{n,b}\ (-1)^b \,{\rm rank}\, H_b(S_n, S_{n-1},\Z)=1.
\end{equation}
One also has the exact sequence
$\cdots \to \bH_b\stackrel{U}{\longrightarrow} \bH_b\longrightarrow \hat{\bH}_b\longrightarrow \bH_{b-1}
\stackrel{U}{\longrightarrow} \cdots$.

\subsection{Deformations and functors}\label{ss:deffunc}
It is natural to ask what are the functors of the homology theory $\bH_*$. We expect that they are
induced by flat deformation of singularities. In \cite{AgostonNemethi}  we proved the following partial result.

 \begin{theorem}\label{th:DEF}
    Consider  a flat deformation of isolated  curve singularities ${(C_t,o)}_{t\in(\bC,0)}$.

   Assume that  either (a) $(C_{t=0},o)$ irreducible, or (b)  ${(C_t,o)}_{t\in(\bC,0)}$ is a delta-constant and $r$-constant
    deformation of plane curve germs.

        Then the linear map connecting the corresponding lattices
    (a)  $\Z^r\to \Z$, $\ell\mapsto \phi(\ell)=|\ell|=\sum_i\ell_i$, respectively
     (b)  $\Z^r\to \Z^r$, $\phi(\ell)=\ell$,
 induces a degree zero graded $\Z[U]$-module morphism $ \bH_*(C_{t\not=0},o)\to \bH_*(C_{t=0},o)$, and similarly a graded (graph) map of degree zero at the level of graded roots ${\mathfrak R}(C_{t\not=0},o)\to{\mathfrak R}(C_{t=0},o)$.

 \vspace{1mm}

 In the general case, when we do not impose any restriction regarding the deformation, then still
 one constructs a natural map which induces a degree zero graded $\Z[U]$-module morphism $ \bH_0(C_{t\not=0},o)\to \bH_0(C_{t=0},o)$, and similarly a graded (graph) map of degree zero at the level of graded roots ${\mathfrak R}(C_{t\not=0},o)\to{\mathfrak R}(C_{t=0},o)$.
\end{theorem}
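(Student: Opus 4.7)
The overall plan is, for each case, to lift the linear lattice map $\phi$ to a continuous (in fact piecewise-linear) map $\Phi\colon \frX^{(t)}\to \frX^{(0)}$ of first quadrants, verify that $\Phi$ sends each level set $S_n^{(t)}$ into $S_n^{(0)}$, and then let the induced maps on relative singular homology assemble into a graded $\Z[U]$--module morphism. In case (a), $\Phi(x_1,\ldots,x_r)=x_1+\cdots+x_r$; in case (b), $\Phi=\mathrm{id}$. Since $\Phi$ is compatible with the inclusions $S_n\hookrightarrow S_{n+1}$ on both sides, one obtains automatically a degree-zero morphism $\bH_*(C_{t\neq 0},o)\to\bH_*(C_{t=0},o)$ commuting with $U$; at the level of $\bH_0$ the same maps restrict to a graded map of the graded roots since connected components are sent to connected components.

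The heart of the argument is the pointwise weight inequality
\[
w_0^{(t=0)}(\phi(\ell))\ \leq\ w_0^{(t)}(\ell)\qquad(\ell\in (\Z_{\geq 0})^r),
\]
equivalently, using $|\phi(\ell)|=|\ell|$, the statement $\hh^{(t=0)}(\phi(\ell))\leq \hh^{(t)}(\ell)$. Granting this, a cube $\square=(\ell,I)$ of weight $\leq n$ in $\frX^{(t)}$ has all its vertices mapped to lattice points of weight $\leq n$ in $\frX^{(0)}$; as $\Phi(\square)$ is a one-dimensional interval $[|\ell|,|\ell|+|I|]$ whose intermediate lattice points $|\ell|+j$ are all images of vertices of $\square$, the whole image sits in $S_n^{(0)}$, proving $\Phi(S_n^{(t)})\subseteq S_n^{(0)}$.

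In case (b), the hypotheses $\delta$--constant and $r$--constant force the family to be $\mu$--constant, hence topologically trivial (Lê--Ramanujam). Since for plane curve germs the multivariable Hilbert function is recoverable from the embedded topological type (as emphasised in Section~8 of the paper and the identification with the Alexander polynomial), one actually has $\hh^{(t=0)}=\hh^{(t)}$ pointwise, so the inequality is an equality and the induced morphism is even an isomorphism. In case (a), one establishes the inequality by constructing, for each $\ell$, a coherent $\calO_{\mathcal C}$--subsheaf $\widetilde{\calF}(\ell)\subseteq \calO_{\mathcal C}$ whose fiber at generic $t$ equals $\calF_t(\ell)$ and whose fiber at $t=0$ is contained in $\calF_0(|\ell|)$. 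The containment on the special fiber is the key geometric input: a section $F$ of $\calO_{\mathcal C}$ satisfying $\frakv_i^{(t)}(F|_{C_t})\geq \ell_i$ for all $i$ and generic $t$ restricts to a function on $C_0$ whose order at the unique preimage in the normalization is at least $\sum_i\ell_i=|\ell|$, because the $r$ preimages in the normalization of the total space collide as $t\to 0$ and the orders add in the limit (essentially an intersection-theoretic argument, which is transparent in the equinormalizable part of the total space). Upper semicontinuity of fiber dimensions of $\calO_{\mathcal C}/\widetilde{\calF}(\ell)$, combined with this containment, then yields $\hh^{(t=0)}(|\ell|)\leq \hh^{(t)}(\ell)$. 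The unrestricted last part of the theorem requires less: one only needs that $\Phi$ sends the $0$--skeleton consistency class of $S_n^{(t)}$ to that of $S_n^{(0)}$, which is a statement at the level of $\pi_0$ of the filtered pieces and can be obtained by a cruder semicontinuity argument, enough to induce a map on $\bH_0$ and on $\mathfrak{R}$.

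\emph{The main obstacle} is the construction of the comparison sheaf $\widetilde{\calF}(\ell)$ and the verification that orders of vanishing really do add in the limit in case (a), especially when the total space is not a priori equinormalizable. In full generality one may need to pass to a partial normalization or to a simultaneous resolution of the family, which is the reason case (a) is stated only for irreducible $(C_{t=0},o)$: then the special fiber has a single point to track in the normalization, making the additivity of orders straightforward from intersection theory, whereas allowing an arbitrary special fiber would require a delicate bookkeeping between the several branches.
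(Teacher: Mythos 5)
Your high-level structure is sound: lift $\phi$ to a piecewise-linear map $\Phi$, show $\Phi(S_n^{(t)}) \subseteq S_n^{(0)}$ via the pointwise weight inequality, reduce to $\hh^{(0)}(|\ell|) \leq \hh^{(t)}(\ell)$, and handle case (b) through L\^e--Ramanujam; the observation that $(\ell,I)$ maps onto $[|\ell|,|\ell|+|I|]$ with every intermediate lattice point being the image of a vertex is exactly the right kind of care. But the semicontinuity step at the heart of case (a) does not close as you have written it. Upper semicontinuity of the fiber dimension of $\calO_{\mathcal C}/\widetilde{\calF}(\ell)$ gives $\dim_\C\calO_0/\widetilde{\calF}(\ell)_0\geq \hh^{(t)}(\ell)$, while the containment $\widetilde{\calF}(\ell)_0\subseteq\calF_0(|\ell|)$ gives $\dim_\C\calO_0/\widetilde{\calF}(\ell)_0\geq\hh^{(0)}(|\ell|)$. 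These two facts bound both Hilbert-function values from above by the \emph{same} quantity; they do not imply $\hh^{(0)}(|\ell|)\leq\hh^{(t)}(\ell)$. What you actually need is \emph{equality} in the semicontinuity, i.e.\ flatness of $\calO_{\mathcal C}/\widetilde{\calF}(\ell)$ over $\C\{t\}$. This does hold, but only because the natural ideal $\widetilde{\calF}(\ell):=\{F\in\calO_{\mathcal C} : F|_{C_t}\in\calF_t(\ell)\ \text{for all}\ t\neq 0\}$ is automatically $t$-saturated (multiplication by $t$ is a unit on every nearby fiber), so the quotient is a finitely generated \emph{torsion-free} $\C\{t\}$-module, hence free; then $\dim_\C\calO_0/\widetilde{\calF}(\ell)_0=\hh^{(t)}(\ell)$ on the nose and the containment finishes. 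You must make this flatness explicit; ``upper semicontinuity'' is strictly weaker and insufficient.

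The containment itself is also justified too loosely, in precisely the direction you flag at the end: when the family is not $\delta$-constant the normalization of the total space does \emph{not} restrict to the normalizations of the fibers, so ``the orders add because the preimages in the normalization collide'' is not an argument you can make in case (a). A route that avoids equinormalizability entirely: for $F\in\widetilde{\calF}(\ell)$ with $F|_{C_0}\neq 0$, the quotient $\calO_{\mathcal C}/(F)$ is $\C\{t\}$-flat (this needs only that $F|_{C_0}$ be a nonzero divisor in $\calO_0$, which holds since $C_0$ is irreducible and $F|_{C_0}\neq 0$), and the identity $\dim_\C\calO_{C,o}/(f)=\sum_i {\rm ord}_{s_i}(f)$ for a nonzero divisor on a reduced curve germ then gives ${\rm ord}_s(F|_{C_0})=\sum_i{\rm ord}_{s_i}(F|_{C_t})\geq|\ell|$ directly. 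Finally, your treatment of the unrestricted last part of the statement is too thin to assess: you never say what the ``natural map'' between lattices of different ranks actually is, so the assertion that a cruder $\pi_0$-level semicontinuity suffices has no verifiable content as stated.
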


\section{More invariants read from $\hh$}\label{s:more}

\subsection{The Hilbert series}

\begin{definition}\label{def:Hil}
 The {\it Hilbert series of the multi-index filtration} is
\begin{equation}
H(t_1,\ldots,t_r)=\sum_{l\in \Z^r}\ \hh(l)\cdot t_1^{l_1}\cdots\, t_r^{l_r}=\sum_{l\in \Z^r} \hh(l)\cdot \bt^l \in
{\mathbb Z}[[t_1,t_1^{-1},\ldots, t_r,t_r^{-1}]].
\end{equation}
\end{definition}
\noindent  Here $l=\sum_{i=1}^r l_iE_i$.  Note that
\begin{equation}\label{eq:MAXh}
\hh(l)=\hh(\max\{l,0\}).
\end{equation}
 Hence $H$ determined completely by
$H(\bt)|_{l\geq 0}:=\sum_{l\geq 0} \hh(l)\cdot \bt^l$.

%
%
%
%

\subsection{The semigroup of $C$.} The semigroup $\calS$ and the Hilbert function $H$ determine each other:
\begin{lemma} (See e.g. \cite{GorNem2015})
\label{eq:semi}
The semigroup can be deduced from the Hilbert function as follows:
$$
\calS=\{l\in \Z_{\geq 0}^r\ |\ \hh(l+E_i)>\hh(l) \ \ \mbox{for every \ $i=1,\ldots, r$}\}.
$$
On the other hand,
 $\hh(l+E_i)-\hh(l)\in\{0,1\}$ for any $l\geq 0$ and $i\in \cV$,
Moreover,   $\hh(l+E_i)=\hh(l)+1$ if there
is an element $s\in \calS$  such that
$s_i=l_i$ and  $s_j\ge l_j$ for $j\neq i$. Otherwise $\hh(l+E_i)=\hh(l).$
\end{lemma}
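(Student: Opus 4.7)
The plan is to analyze the dimension of $\calF(l)/\calF(l+E_i)$, which by definition equals $\hh(l+E_i)-\hh(l)$, using the \emph{$i$-th leading coefficient map}. Since $\overline{\calO}=\oplus_j\C\{t_j\}$, each $f\in\calO\subset\overline{\calO}$ has an $i$-th component $f_{(i)}\in\C\{t_i\}$, and for $f\in\calF(l)$ I would set $\phi_i(f)$ to be the coefficient of $t_i^{l_i}$ in $f_{(i)}$. Since $\frakv_i(f)\geq l_i$, this is well-defined, and $\phi_i(f)=0$ iff $\frakv_i(f)\geq l_i+1$; combined with the already-valid inequalities $\frakv_j(f)\geq l_j$ for $j\neq i$, this is equivalent to $f\in\calF(l+E_i)$. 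Hence $\phi_i$ induces an injective $\C$-linear map $\calF(l)/\calF(l+E_i)\hookrightarrow\C$, yielding $\hh(l+E_i)-\hh(l)\in\{0,1\}$ and proving the second assertion.

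This injection is an isomorphism precisely when $\phi_i$ takes a nonzero value on $\calF(l)$, i.e.\ when there exists $f\in\calO$ with $\frakv_i(f)=l_i$ and $\frakv_j(f)\geq l_j$ for $j\neq i$. Setting $s:=\frakv(f)\in\calS$, this is exactly the existence of $s\in\calS$ with $s_i=l_i$ and $s_j\geq l_j$ for $j\neq i$; conversely any such $s$ produces such an $f$. This gives the ``moreover'' part.

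For the semigroup description, the direction $l\in\calS\Rightarrow\hh(l+E_i)>\hh(l)$ for all $i$ is immediate from the ``moreover'' part by taking $s=l$. For the converse, assume $\hh(l+E_i)>\hh(l)$ for every $i$; the ``moreover'' part yields $f_1,\ldots,f_r\in\calO$ with $\frakv_i(f_i)=l_i$ and $\frakv_j(f_i)\geq l_j$ for $j\neq i$. The main step is to upgrade this coordinate-by-coordinate data to a single element realizing $\frakv(f)=l$, which I would do by a generic linear combination $f=\sum_i\lambda_i f_i$. Automatically $\frakv_j(f)\geq l_j$ for every $j$, and for each $j$ the map $\lambda\mapsto \phi_j(f)=\sum_i\lambda_i\phi_j(f_i)$ is a nonzero linear form in $\lambda$, since its $\lambda_j$-coefficient $\phi_j(f_j)$ is the (nonzero) leading coefficient of $(f_j)_{(j)}$ at $t_j^{l_j}$. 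Choosing $\lambda\in\C^r$ outside the union of the $r$ hyperplanes $\{\phi_j(f)=0\}_{j=1}^r$ (possible since $\C$ is infinite) yields $\phi_j(f)\neq 0$, hence $\frakv_j(f)=l_j$, for every $j$, so $\frakv(f)=l$ and $l\in\calS$. The only genuine obstacle is this final generic-combination step, which is enabled precisely by having all $r$ witnessing elements $f_i$ simultaneously available.
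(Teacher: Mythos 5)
Your strategy — the $i$-th leading-coefficient map $\phi_i\colon \calF(l)\to\C$ whose kernel is $\calF(l+E_i)$, plus a generic linear combination to pass from one-coordinate witnesses to a full semigroup element — is the right one, and the paper itself only cites \cite{GorNem2015} rather than giving a proof, so there is no in-text argument to compare against. The $\{0,1\}$-jump claim, the ``if'' direction of the moreover part, and the semigroup characterization (including the generic $\lambda\in\C^r$ avoiding the union of hyperplanes $\{\phi_j=0\}$) are all correctly argued.

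There is, however, a gap in the ``otherwise'' direction of the moreover part. From $\hh(l+E_i)>\hh(l)$ you produce $f\in\calF(l)$ with $\phi_i(f)\neq 0$, i.e. $\frakv_i(f)=l_i$ and $\frakv_j(f)\geq l_j$ for $j\neq i$, and then set ``$s:=\frakv(f)\in\calS$.'' But $\calS$ consists of valuation vectors of \emph{nonzero divisors} only; the element $f$ you obtain may vanish identically on some branch $j\neq i$, in which case $\frakv_j(f)=\infty$ and $\frakv(f)\notin\Z_{\geq 0}^r$ (e.g.\ for the node $\calO=\{(f_1,f_2):f_1(0)=f_2(0)\}\subset\C\{t\}^2$, take $l=(0,5)$, $i=2$, $f=(0,t^5)$). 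The fix is exactly the device you already use at the end: pick a nonzero divisor $g\in\frc$ with $\frakv(g)\geq l+E_i$ (possible since $c+\Z^r_{\geq 0}\subset\calS$), and replace $f$ by $f+\lambda g$ for generic $\lambda\in\C$. Then $\phi_i(f+\lambda g)=\phi_i(f)\neq 0$ because $\phi_i(g)=0$, so $\frakv_i(f+\lambda g)=l_i$; for each $j\neq i$, at most one $\lambda$ can make $(f+\lambda g)_{(j)}=0$, so for $\lambda$ outside a finite set $f+\lambda g$ is a nonzero divisor with $\frakv_j(f+\lambda g)\geq l_j$, and $s:=\frakv(f+\lambda g)\in\calS$ has the required form. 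With this repair the proof is complete.
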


\subsection{The Poincar\'e series.} If $r=1$, then the Poincar\'e series
of the graded ring $\oplus_{l} \cF(l)/\cF(l+E_1)$ is   $P(t)=-H(t)(1-t^{-1})$. For general $r$, one
 defines the Poincar\'e series  by
\begin{equation}\label{eq:HPC}
P(t_1,\ldots,t_r):=-H(t_1,\ldots,t_r)\cdot \prod_i(1-t_i^{-1}).
\end{equation}
This means that the  coefficient $\pp_l$ of $
P(\bt)=\sum_{l}\pp_l\cdot t_1^{l_1}\ldots t_r^{l_r}$ satisfies
\begin{equation}
\label{htopi}
\pp_{l}=\sum_{I\subset \cV}(-1)^{|I|-1}\hh(l+E_{I}), \ \ \ \ (E_I=\sum_{i\in I}E_i).
\end{equation}

The space ${\mathbb Z}[[t_1,t_1^{-1},\ldots, t_r,t_r^{-1}]]$
is a module over the ring of Laurent power series, hence
the multiplication by $\prod_i(1-t_i^{-1})$ in \eqref{eq:HPC} is  well-defined. 
 One can check (using e.g. \eqref{eq:MAXh}) that the right hand side of \eqref{eq:HPC}
is a power series involving only nonnegative powers of $t_i$. In fact, cf. Lemma \ref{prop:motProp},
the support of $P(\bt)$ is included in  $\calS$.

If $r=1$, then Lemma \ref{eq:semi} implies that $P(t)=\sum_{s\in \calS}t^s=-\sum _{s\not\in \calS}t^s+ 1/(1-t)$, where
$P^+(t):=-\sum _{s\not\in \calS}t^s$ is a polynomial. Furthermore, by \cite{cdg}, $P(\bt)$ is a polynomial for $r>1$.

Multiplication by $\prod_i(1-t_i^{-1})$ of series with $\hh(0)=0$ is injective if $r=1$, however it is not if $r>1$. In particular, in such cases
 it can happen that
for two different series $H(\bt)$ we obtain the very same $P(\bt)$. For a concrete pair see e.g.
\cite{cdg3}. Nevertheless, even in such cases $r>1$ one can recover $H(\bt)$ as follows.

For any subset $J=\{i_1,\ldots,i_{|J|}\}\subset \cV:=\{1,\ldots, r\}$, $J\not=\emptyset$,
 consider the curve germ  $(C_{J},o)=\cup_{i\in J}(C_{i},o)$. As above, this germ defines
 the Hilbert series $H_{C_J}$ of $(C_J,o)$ in
 variables $\{t_i\}_{i\in J}$:
 $$H_{C_J}(t_{i_1},\ldots,t_{i_{|J|}})=\sum_{l}
  \hh_{J}(l)\cdot
 t_{i_1}^{l_{i_1}}\ldots t_{i_{|J|}}^{l_{i_{|J|}}}.$$
 By the very definition,
 \begin{equation}\label{eq:redHilb}
H_{C_J}(t_{i_1},\ldots,t_{i_{|J|}})=H(t_1,\ldots,t_r)|_{t_i=0\ i\not\in J}.\end{equation}
Analogously, we also consider  the  Poincar\'e series of $(C_J,o)$:
$$P_{C_J}(t_{i_1},\ldots,t_{i_{|J|}})=\sum_{l}
 \pp_{J,l}\cdot
t_{i_1}^{l_{i_1}}\ldots t_{i_{|J|}}^{l_{i_{|J|}}}$$
computed from $H_{C_J}$ by a similar identity as (\ref{eq:HPC}).
By definition, for $J=\emptyset$ we take $P_{\emptyset}\equiv 0.$

The next theorem inverts  (\ref{htopi}) in the sense that  we  recover $H$
from the collection $\{P_{C_J}\}_J$.

\begin{theorem}\label{reconst} (\cite[Theorem 3.4.3]{GorNem2015}, see also  \cite[Corollary 4.3]{julioproj})
With the above notations
\begin{equation}
\label{hilbert}
H(t_1,\ldots,t_r)|_{l\geq 0}
=\frac{1}{ \prod_{i=1}^{r}(1-t_i)}\sum_{J\subset \cV}(-1)^{|J|-1}
\Big(\prod_{i\in J}t_i\Big)\cdot P_{C_J}(t_{i_1},\ldots,t_{i_{|J|}}).
\end{equation}
In particular,
the restricted Hilbert series  $H(t)|_{l\geq 0}$
of a multi-component curve is a rational function with denominator $\prod_{i=1}^{r}(1-t_i)^2$.
\end{theorem}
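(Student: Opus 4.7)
My strategy is to reduce \eqref{hilbert} to a combinatorial inclusion-exclusion identity on coefficients of formal power series, using the defining relation \eqref{eq:HPC} applied to each subcurve $(C_J, o)$, together with \eqref{eq:redHilb} and the truncation rule \eqref{eq:MAXh}.

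First I apply \eqref{eq:HPC} to $(C_J, o)$ to write $P_{C_J}(\mathbf{t}_J) = -H_{C_J}(\mathbf{t}_J)\prod_{i\in J}(1-t_i^{-1})$. Since $\prod_{i\in J}t_i(1-t_i^{-1})=(-1)^{|J|}\prod_{i\in J}(1-t_i)$, one obtains $(-1)^{|J|-1}\bigl(\prod_{i\in J}t_i\bigr)P_{C_J}(\mathbf{t}_J)=H_{C_J}(\mathbf{t}_J)\prod_{i\in J}(1-t_i)$. Multiplying both sides of \eqref{hilbert} through by $\prod_{i=1}^{r}(1-t_i)$, the claim reduces to the cleaner identity
\begin{equation*}
H(\mathbf{t})|_{l\geq 0}\cdot \prod_{i=1}^{r}(1-t_i)=\sum_{\emptyset\neq J\subset \cV}H_{C_J}(\mathbf{t}_J)\prod_{i\in J}(1-t_i).
\end{equation*}

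Next I compare coefficients of $\mathbf{t}^l$ for $l\in\Z_{\geq 0}^{r}$, setting $K:=\mathrm{supp}(l)$. The LHS coefficient is $\sum_{I\subset K}(-1)^{|I|}h(l-E_I)$, the restriction $I\subset K$ being enforced because $H|_{l\geq 0}$ vanishes outside the positive quadrant. Unfolding \eqref{eq:redHilb} as $h_J(l_J)=h(l_J,0_{\cV\setminus J})$, the $J$-summand on the RHS contributes only when $J\supset K$ (since $H_{C_J}\prod_{i\in J}(1-t_i)$ involves only $\mathbf{t}_J$), and then equals $\sum_{I\subset J}(-1)^{|I|}h(l-E_I)$. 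Swapping the order of summation and counting $|\{J:K\cup I\subset J\subset \cV\}|=2^{r-|K\cup I|}$, the RHS coefficient becomes $\sum_{I\subset \cV}(-1)^{|I|}h(l-E_I)\cdot 2^{r-|K\cup I|}$.

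To finish, decompose $I=I_1\sqcup I_2$ with $I_1=I\cap K$ and $I_2=I\setminus K$; since $l_j=0$ for $j\notin K$, subtracting $E_{I_2}$ produces $-1$ entries on $I_2$, and \eqref{eq:MAXh} collapses these back to $0$, whence $h(l-E_I)=h(l-E_{I_1})$. Factoring the $I_1$-sum out and invoking $\sum_{I_2\subset \cV\setminus K}(-1)^{|I_2|}2^{|\cV\setminus K|-|I_2|}=(2-1)^{|\cV\setminus K|}=1$, the RHS coefficient collapses to $\sum_{I_1\subset K}(-1)^{|I_1|}h(l-E_{I_1})$, matching the LHS; the case $l=0$ is immediate since then $h(-E_I)=0$ forces both sides to vanish. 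The rationality statement for $r>1$ then follows by inspection of denominators: $P_{C_J}$ is a polynomial for $|J|\geq 2$ and equals $1/(1-t_i)$ for $|J|=\{i\}$, so after dividing by $\prod_i(1-t_i)$ the overall denominator is at most $\prod_i(1-t_i)^2$. The main obstacle is the penultimate step, where \eqref{eq:MAXh} and the enumeration factor $2^{r-|K\cup I|}$ must be juggled simultaneously so that all terms with $I\not\subset K$ cancel and exactly the inclusion-exclusion over $I\subset K$ remains.
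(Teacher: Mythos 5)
Your proof is correct, and the argument is clean. The paper itself does not prove this statement: it simply cites \cite[Theorem 3.4.3]{GorNem2015} and \cite[Corollary 4.3]{julioproj}, so there is no paper-internal proof to compare against. Your route — rewrite $(-1)^{|J|-1}\bigl(\prod_{i\in J}t_i\bigr)P_{C_J}$ as $H_{C_J}\prod_{i\in J}(1-t_i)$ via \eqref{eq:HPC}, then match $\bt^l$-coefficients by inclusion–exclusion, exploiting $\eqref{eq:redHilb}$ to identify $h_J(m)=h(m,0_{\cV\setminus J})$ and $\eqref{eq:MAXh}$ to kill the $-1$ coordinates outside $\mathrm{supp}(l)$, and finally collapsing $\sum_{I_2\subset \cV\setminus K}(-1)^{|I_2|}2^{|\cV\setminus K|-|I_2|}=1$ — is the natural one and, to the best of my knowledge, is essentially the argument used in the cited sources.

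Two small points of hygiene rather than gaps. First, when you use $h_J(l_J-E_I)=h(l_J-E_I,0_{\cV\setminus J})$ for $I\subset J$, the argument may have negative coordinates, so you are implicitly applying $\eqref{eq:MAXh}$ to \emph{both} $h_J$ and $h$ to make \eqref{eq:redHilb} match there; that is fine, but it is worth stating, since \eqref{eq:redHilb} as written is a statement about the power series and the substitution $t_i=0$ is only literally meaningful on the nonnegative part. Second, the parenthetical remark about $l=0$ is slightly misdirected: what makes both sides vanish is $\hh(0)=0$ (both sides reduce to $\hh(0)$), not $\hh(-E_I)=0$ per se (though the latter does also hold by the max-rule). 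Finally, in the last paragraph "$|J|=\{i\}$" should read "$J=\{i\}$", and more precisely $P_{C_i}$ is not equal to $1/(1-t_i)$ but only has denominator $1-t_i$ (it equals $1/(1-t_i)$ minus a polynomial), which is all your denominator count needs.
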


Assume that $(C,o)$ is Gorenstein. If $r=1$ and one writes $P(t)=\Delta(t)/(1-t)$ (for a certain  polynomial $\Delta(t)$),
then from the Gorenstein symmetry
of the semigroup one gets the symmetry of $\Delta(t)$, namely $\Delta(t^{-1})=t^{-\mu(C,o)}\Delta(t)$.
More generally, for any $r>1$, the polynomial $P(\bt)$ satisfies the Gorenstein symmetry
$P(t_1^{-1},\ldots, t_r^{-1})=(-1)^r\prod_i t_i^{1-c_i}\cdot P(t_1,\ldots , t_r)$.

\subsection{The local hyperplane arrangements. }\label{ss:ARR}  For any fixed $l$ let us consider the
set
\begin{equation*}
\cH(l):=\{f\in \calO\, :\, \frakv(f)=l\}=\calF(l)\setminus \bigcup_i\, \calF(l+E_i).
\end{equation*}
Since $\calF(l+E_i)$ is either $\calF(l)$ or one of its hyperplanes (cf. \ref{eq:semi}), $\cH(l)$ is
either empty or it is a hyperplane
arrangement in  $\calF(l)$. This can be reduced to a finite dimensional
central hyperplane arrangement
$$\cH'(l):=\frac{\calF(l)}{\calF(l+E_{\cV})}\setminus \bigcup_i \frac{\calF(l+E_i)}{\calF(l+E_{\cV})},$$
since $\cH(l)\simeq \calF(l+E_{\cV})\times \cH'(l)$ as vector spaces. Note that both $\cH'(l)$ and $\cH(l)$
admit a free $\C^*$--action (multiplication by nonzero scalar), hence one automatically has the
two projective arrangements $\setP\cH'(l)=\cH'(l)/\C^*$ and $\setP\cH(l)=\cH(l)/\C^*$.
In fact $\cH'(l)=\setP\cH'(l)\times \C^*$.
The first part of the following proposition follows from \cite[Theorem 5.2]{or1},
the second part
can be deduced from \eqref{htopi} and inclusion-exclusion formula (see e.g. \cite{cdg2,cdg} and Lemma \ref{lem:L} below).

\begin{proposition}
$H_*(\cH'(l),\Z)$ and $H_*(\setP\cH'(l),\Z)$  have trivial  $\Z$--torsion.
The Euler characteristic of\ \  $\setP\cH(l)$ (and of\ \ $\setP\cH'(l)$) equals $\pp_{l}$, the coefficient of the Poincar\'e series $P(\bt)$ at \ $\bt^{l}$.
\end{proposition}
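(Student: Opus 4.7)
The plan is to treat the two assertions in turn; both are classical applications of the cohomology theory of complex hyperplane arrangements, combined with a multivariable inclusion--exclusion argument on Euler characteristics.

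For the torsion freeness of $H_*(\cH'(l),\Z)$, I would invoke directly the Orlik--Solomon theorem cited in the statement, namely \cite[Theorem 5.2]{or1}: the cohomology ring of the complement of a central complex hyperplane arrangement is isomorphic to the Orlik--Solomon algebra, which is by construction a free $\Z$-module. Universal coefficients then yield torsion-freeness of $H_*(\cH'(l),\Z)$. For $\setP\cH'(l)$ I would exploit the product decomposition $\cH'(l)\cong \setP\cH'(l)\times \C^*$ recorded just before the statement. Since $H_*(\C^*,\Z)$ is free, the K\"unneth formula gives
\begin{equation*}
H_n(\cH'(l),\Z)\;\cong\; H_n(\setP\cH'(l),\Z)\,\oplus\, H_{n-1}(\setP\cH'(l),\Z)
\end{equation*}
with no $\mathrm{Tor}$ contribution. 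Any torsion in $H_*(\setP\cH'(l),\Z)$ would therefore persist on the left-hand side, contradicting what was just established.

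For the Euler characteristic, set $V_I:=\calF(l+E_I)/\calF(l+E_{\cV})$ for each $I\subset \cV$, so that $\setP V_I$ is a linear subspace of $\setP V_\emptyset$ of complex dimension $\dim_\C V_I=\hh(l+E_{\cV})-\hh(l+E_I)$, and $\chi(\setP V_I)=\dim_\C V_I$ (this identity is also valid in the degenerate case $V_I=0$, where both sides are zero). Inclusion--exclusion applied to $\setP\cH'(l)=\setP V_\emptyset\setminus \bigcup_i \setP V_{\{i\}}$, via the additivity of $\chi$ on complex quasi-projective varieties, yields
\begin{equation*}
\chi(\setP\cH'(l))=\sum_{I\subset \cV}(-1)^{|I|}\chi(\setP V_I)=\sum_{I\subset \cV}(-1)^{|I|}\bigl(\hh(l+E_{\cV})-\hh(l+E_I)\bigr).
\end{equation*}
Because $r\geq 1$, the terms involving the constant $\hh(l+E_{\cV})$ cancel, since $\sum_{I\subset \cV}(-1)^{|I|}=0$. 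Comparison with \eqref{htopi} therefore gives $\chi(\setP\cH'(l))=\sum_I (-1)^{|I|-1}\hh(l+E_I)=\pp_l$. To transfer the identity to $\setP\cH(l)$, I would use the $\C^*$-equivariant projection $\calF(l)\to \calF(l)/\calF(l+E_{\cV})$, which realizes $\cH(l)\to \cH'(l)$ as an affine bundle with fiber $\calF(l+E_{\cV})$; passing to quotients by the scaling $\C^*$-action gives a map $\setP\cH(l)\to\setP\cH'(l)$ whose fibers are again affine spaces isomorphic to $\calF(l+E_{\cV})$, hence contractible. This makes it a homotopy equivalence, and so $\chi(\setP\cH(l))=\chi(\setP\cH'(l))=\pp_l$.

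The main subtle point is simply to check the signs in the inclusion--exclusion uniformly, including the degenerate extreme $I=\cV$; once the identity $\chi(\setP V_I)=\dim_\C V_I=\hh(l+E_{\cV})-\hh(l+E_I)$ is verified across all $I$, matching the output with \eqref{htopi} is mechanical. The infinite-dimensionality of the fiber $\calF(l+E_{\cV})$ in the last step is no obstruction, since only the homotopy type (contractibility, hence acyclicity) enters the Euler characteristic computation.
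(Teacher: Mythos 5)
Your proof is correct and follows essentially the same route the paper indicates: the torsion-freeness is an application of Orlik--Solomon \cite[Theorem 5.2]{or1}, extended to the projectivization via the product splitting $\cH'(l)\cong\setP\cH'(l)\times\C^*$ and K\"unneth, and the Euler characteristic identity is the specialization to $\chi$ of the Grothendieck-ring inclusion--exclusion recorded in Lemma~\ref{lem:L}, matched against \eqref{htopi}. The only point worth tightening is the passage from $\setP\cH(l)$ to $\setP\cH'(l)$: rather than appealing to contractibility of fibers (which presupposes a fibration structure), you can observe directly that the linear homotopy $h_t(v,w)=(v,tw)$ on $\cH(l)\cong\cH'(l)\times\calF(l+E_{\cV})$ is $\C^*$-equivariant and hence descends to a deformation retraction $\setP\cH(l)\to\setP\cH'(l)$, sidestepping any concern about the infinite-dimensional fiber.
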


\subsection{Motivic Poincar\'e series.}\label{ss:MP} The series $P^m(t_1,\ldots, t_r;q)\in
\Z[[t_1,\ldots, t_r]][q]$ is defined in \cite{cdg3} as a refinement of $P(\bt)$.
By definition, the coefficient  of
$t_1^{l_1}\ldots t_r^{l_r}$  is the (normalized) class of
$\setP\cH'(l)$ in the Grothendieck ring of algebraic varieties. It turns out that the class
of a central hyperplane arrangement can always be expressed in terms of the class ${\mathbb L}$
of the affine line. Indeed, one has:
\begin{lemma}\label{lem:L}
$V$ be a vector space and let $\cH=\{\cH_i\}_{i\in\cV}$ be a collection of linear hyperplanes in $V$.
For a subset $J\subset \cV$ we define the rank function by
$\rho(J)={\rm codim} \{\cup_{i\in J}\cH_i\subset V\}$.
Then in the Grothendieck ring of varieties
(by the inclusion-exclusion formula) one has
 $$[V\setminus \cup_{i\in\cV}\cH_i]=\sum_{J\subset \cV}(-1)^{|J|}\ [\cap_{i\in J}\cH_i]
 =\sum_{J\subset \cV}(-1)^{|J|}\ {\mathbb L}^{\dim V-\rho(J)}.$$
 Since $[\C^*]={\mathbb L}-1$, one also has $[(V\setminus \cup_{i\in\cV}\cH_i)/\C^*]=[V\setminus \cup_{i\in\cV}\cH_i]
 /({\mathbb L}-1)$.
\end{lemma}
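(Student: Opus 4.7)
My plan is to deduce both equalities by direct inclusion-exclusion in the Grothendieck ring $K_0(\mathrm{Var}_\C)$, and then handle the $\C^*$-quotient via the scissor relation applied to a Zariski-locally trivial $\C^*$-bundle.

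First I would apply the scissor relation $[X] = [Z] + [X \setminus Z]$ (for $Z \subset X$ closed) iteratively to the hyperplanes $\cH_i \subset V$. Writing $U_k := V \setminus \bigcup_{i \leq k} \cH_i$ and decomposing $U_{k-1} = U_k \sqcup (\cH_k \cap U_{k-1})$ yields, by a short induction on $|\cV|$,
$$
\Bigl[V \setminus \bigcup_{i \in \cV} \cH_i\Bigr] \;=\; \sum_{J \subset \cV} (-1)^{|J|}\,\Bigl[\bigcap_{i \in J} \cH_i\Bigr],
$$
with the convention $\bigcap_{i \in \emptyset} \cH_i = V$; this is the first equality, equivalently the M\"obius inversion on the Boolean lattice $2^{\cV}$ realized in $K_0$. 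For the second equality, each $\bigcap_{i \in J} \cH_i$ is itself a linear subspace of $V$, of dimension $\dim V - \rho(J)$, and as an algebraic variety it is isomorphic to $\mathbb{A}^{\dim V - \rho(J)}$; its Grothendieck class is therefore ${\mathbb L}^{\dim V - \rho(J)}$.

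For the last assertion, set $U := V \setminus \bigcup_i \cH_i$. Since every $\cH_i$ is linear the origin lies in all of them, so $0 \notin U$, while $U$ is visibly stable under the scaling $\C^*$-action on $V$, which acts freely on $U$. The quotient map $U \to U/\C^*$ is a Zariski-locally trivial principal $\C^*$-bundle, as one sees directly: cover $U$ by the opens $U_\ell := \{v \in U : \ell(v) \neq 0\}$, where $\ell$ ranges over nonzero linear forms on $V$, and use $v \mapsto v/\ell(v)$ as a section on each $U_\ell$. Multiplicativity of Grothendieck classes under Zariski-locally trivial fibrations then gives $[U] = [U/\C^*]\cdot [\C^*] = [U/\C^*]\cdot ({\mathbb L}-1)$, which is the stated identity.

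The only step requiring any care is the last one: one must know that $U \to U/\C^*$ has Zariski-local sections so that multiplicativity of the class applies. The explicit sections $v \mapsto v/\ell(v)$ make this elementary, so I do not anticipate any serious obstacle; everything else is formal bookkeeping in $K_0$.
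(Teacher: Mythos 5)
The paper states this lemma without proof, merely citing ``the inclusion-exclusion formula''; your argument fills in exactly the intended route. The scissor-relation induction gives the motivic inclusion-exclusion, linearity gives the affine-space classes $\mathbb{L}^{\dim V-\rho(J)}$, and your explicit $\C^*$-invariant sections $v\mapsto v/\ell(v)$ on the opens $U_\ell$ justify the Zariski-local triviality of $U\to U/\C^*$ that the paper asserts silently. The proof is correct.
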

\begin{corollary}
The class of the (finite) local hyperplane arrangement $\cH'(l)$ equals
$$[\cH'(l)]=({\mathbb L}-1)[\setP\cH'(l)]=\sum_{J\subset \cV}(-1)^{|J|}\ {\mathbb L}^{\hh(l+E_{\cV})-\hh(l+E_J)}.$$
\end{corollary}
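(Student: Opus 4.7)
The plan is to invoke the inclusion--exclusion identity of Lemma \ref{lem:L} applied to the ambient vector space $V := \calF(l)/\calF(l+E_\cV)$ equipped with the linear subspace collection $\cH_i := \calF(l+E_i)/\calF(l+E_\cV)$ for $i \in \cV$. By construction, $\cH'(l) = V \setminus \bigcup_{i \in \cV} \cH_i$. By Lemma \ref{eq:semi}, each $\cH_i$ is either a genuine hyperplane of $V$ or the whole of $V$; the inclusion--exclusion identity for Grothendieck classes handles both cases uniformly.

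The main computational step is to identify $\bigcap_{i \in J} \cH_i$ for every $J \subset \cV$. I would observe that $\bigcap_{i \in J} \calF(l+E_i) = \calF(l+E_J)$: indeed, a function $f \in \calO$ lies in the left--hand side exactly when $\frakv_i(f) \geq l_i + 1$ for every $i \in J$ and $\frakv_k(f) \geq l_k$ for every $k \notin J$, which is precisely the defining condition of $\calF(l+E_J)$. Passing to quotients,
\begin{equation*}
\bigcap_{i \in J} \cH_i \;=\; \calF(l+E_J)/\calF(l+E_\cV),
\end{equation*}
a linear subspace of $V$ of dimension $\hh(l+E_\cV) - \hh(l+E_J)$ by the very definition of $\hh$. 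Hence its Grothendieck class equals $\mathbb{L}^{\hh(l+E_\cV) - \hh(l+E_J)}$.

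Substituting this into the identity $[\cH'(l)] = \sum_{J \subset \cV} (-1)^{|J|}[\cap_{i \in J} \cH_i]$ furnished by Lemma \ref{lem:L} immediately yields the second equality of the corollary. The first equality $[\cH'(l)] = (\mathbb{L}-1)[\setP\cH'(l)]$ is an immediate consequence of the product decomposition $\cH'(l) = \setP\cH'(l) \times \C^*$ already recorded in subsection \ref{ss:ARR}, together with multiplicativity of the Grothendieck class on Cartesian products and the identity $[\C^*] = \mathbb{L}-1$.

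No substantial obstacle is foreseen; the argument is essentially a bookkeeping of valuative conditions. The only minor subtlety is the degenerate case where $\hh(l+E_i)=\hh(l)$ for some $i$ (so that $\cH_i = V$ and $\cH'(l) = \emptyset$): here one notes that $\calF(l+E_i) = \calF(l)$ forces $\calF(l+E_J) = \calF(l+E_{J\setminus\{i\}})$ for every $J \ni i$ (since the condition $\frakv_i \geq l_i + 1$ is automatic on $\calF(l)$), so the contributions of $J$ and $J\setminus\{i\}$ in the sum cancel pairwise and the right--hand side collapses to $0 = [\emptyset]$, as required.
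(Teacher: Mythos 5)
Your argument is correct and is the natural, intended deduction of the corollary from Lemma \ref{lem:L}: the paper itself offers no explicit proof, treating the identification $\bigcap_{i\in J}\cH_i=\calF(l+E_J)/\calF(l+E_\cV)$ and the ensuing dimension count as immediate. Your computation $\bigcap_{i\in J}\calF(l+E_i)=\calF(l+E_J)$ (taking the coordinatewise maximum of the shifts $l+E_i$), the identification of the Grothendieck class of that quotient as $\mathbb{L}^{\hh(l+E_\cV)-\hh(l+E_J)}$, and the first equality via $\cH'(l)=\setP\cH'(l)\times\C^*$ are all exactly right. The remark on the degenerate case ($\hh(l+E_i)=\hh(l)$, so $\cH_i=V$) is a correct sanity check, though not strictly needed since the inclusion--exclusion identity holds for any finite collection of linear subspaces, not just proper hyperplanes.
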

Replacing ${\mathbb L}^{-1}$ by a new variable $q$, one can define (following \cite{cdg3}) the {\it motivic Poincar\'e series}
$P^{m}(\bt;q):=\sum_{l}\pp^m_{l}(q)\cdot \bt^{l}$ by
\begin{equation}\label{eq:pmot}
 \pp^m_{l}(q):={\mathbb L}^{1-\hh(l+E_{\cV})}[\setP\cH'(l)]\Big|_{{\mathbb L}^{-1}=q}
=\sum_{J\subset \cV}(-1)^{|J|}\frac{q^{\hh(l+E_J)}}{1-q}=
\sum_{J\subset \cV}(-1)^{|J|}\cdot\frac{q^{\hh(l+E_J)}-q^{\hh(l)}}{1-q}.
\end{equation}
For another definition in terms of motivic integrals, see \cite{cdg3}.
\begin{proposition}\label{prop:motProp} \cite{cdg3,MoyanoTh,Moyano,Gorsky,GorNem2015}
(a) $\lim_{q\to 1}P^m(\bt;q)=P(\bt)$;

(b)  $P^m(\bt;q)$ is a rational function of type
$\overline{{P}^m}(\bt;q)/(\prod_{i\in\cV}(1-t_{i}q))$, where
$\overline{{P}^m}(\bt;q)\in \Z[\bt,q]$.

(c) The support of $P^m(\bt;q)$ is exactly $\calS$. That is, $\pp^m_l(q)\not=0$ if and only if  $l\in \calS$.

(d)  In the
Gorenstein case, $$\overline{P^m}((qt_1)^{-1},\ldots , (qt_r)^{-1};q) =q^{-\delta(C,o)}
\prod_{i\in \cV} t_i^{-c_i}\cdot \overline{P^m}(t_1,\ldots, t_r;q).$$
\end{proposition}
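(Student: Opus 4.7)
Parts (a) and (c) follow from a direct polynomial rewriting of $\pp^m_l(q)$. For (a), L'Hôpital gives $\lim_{q\to 1}(q^a-q^b)/(1-q)=b-a$, so termwise in (\ref{eq:pmot})
$$\lim_{q\to 1}\pp^m_l(q)=\sum_{J\subset\cV}(-1)^{|J|}(\hh(l)-\hh(l+E_J));$$
since $\sum_J(-1)^{|J|}=0$ for $r\geq 1$, this collapses to $\sum_{J\ne\emptyset}(-1)^{|J|+1}\hh(l+E_J)=\pp_l$ by (\ref{htopi}). For (c), rewrite $\frac{q^a-q^b}{1-q}=-(q^b+\cdots+q^{a-1})$ to obtain
$$\pp^m_l(q)=\sum_{\substack{J\ne\emptyset\\ \hh(l+E_J)>\hh(l)}}(-1)^{|J|+1}\bigl(q^{\hh(l)}+\cdots+q^{\hh(l+E_J)-1}\bigr)\in\Z[q].$$
If $l\in\calS$, Lemma \ref{eq:semi} gives $\hh(l+E_i)>\hh(l)$ for every $i$; the inclusion $\calF(l+E_J)\subseteq\calF(l+E_i)\subsetneq\calF(l)$ then forces $\hh(l+E_J)>\hh(l)$ for every $J\neq\emptyset$, so the coefficient of $q^{\hh(l)}$ is $\sum_{J\neq\emptyset}(-1)^{|J|+1}=1\neq 0$. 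Conversely, if $l\notin\calS$ there is some $i$ with $\hh(l+E_i)=\hh(l)$, and the witness criterion of Lemma \ref{eq:semi} (the absence of $s\in\calS$ with $s_i=l_i$ and $s_j\geq l_j$ for $j\neq i$) forces $\hh(l+E_J)=\hh(l+E_{J\setminus\{i\}})$ for every $J\ni i$; pairing $J\leftrightarrow J\setminus\{i\}$ in $(1-q)\pp^m_l(q)=\sum_J(-1)^{|J|}q^{\hh(l+E_J)}$ then cancels all terms.

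For (b), partition $\Z^r_{\geq 0}$ into the bulk $\{l\geq c\}$ and its complement. On the bulk, $\hh(l)=|l|-\delta$ and $\hh(l+E_J)-\hh(l)=|J|$ reduce the formula above to $\pp^m_l(q)=q^{|l|-\delta}(1-q)^{r-1}$ (using $\sum_{J\ne\emptyset}(-1)^{|J|+1}(1-q^{|J|})=(1-q)^r$), whose generating function is the rational $q^{|c|-\delta}(1-q)^{r-1}\bt^c\prod_i(1-qt_i)^{-1}$. The complementary contribution is organized via inclusion-exclusion on the walls $\{l_i<c_i\}$ using (\ref{eq:redHilb}), which identifies the Hilbert function restricted to a coordinate hyperplane with that of the subcurve $(C_J,o)$; induction on $r$ (the case $r=1$ being explicit) reassembles these into a rational function with the same denominator, and clearing $\prod_i(1-qt_i)$ produces $\overline{P^m}(\bt,q)\in\Z[\bt,q]$.

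For (d), the Gorenstein symmetry $\hh(l)-\hh(c-l)=|l|-\delta$ from \ref{bek:GORdualoty} translates, after the substitution $l\mapsto c-l$ in (\ref{eq:pmot}), into a clean relation between $\pp^m_l(q)$ and $\pp^m_{c-l}(q)$; combined with the substitution $t_i\mapsto(qt_i)^{-1}$ (under which $1-qt_i\mapsto 1-t_i^{-1}$) and careful tracking of the monomial prefactors arising from the change of summation variable, this yields the claimed functional equation with factor $q^{-\delta}\prod_it_i^{-c_i}$. The main obstacle is the boundary analysis in (b): the inclusion-exclusion over walls must be coordinated with the conductor structure of the subcurves so as to reassemble into a polynomial numerator. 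An efficient alternative, following \cite{cdg3}, is to realize $P^m(\bt;q)$ as a motivic integral over an arc space, whose rationality is then a direct consequence of Denef-Loeser; the Gorenstein symmetry in (d) can then be read off from the corresponding symmetry of the motivic integrand.
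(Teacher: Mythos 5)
Note first that the paper does not prove Proposition~\ref{prop:motProp}; it is stated with references \cite{cdg3,MoyanoTh,Moyano,Gorsky,GorNem2015}, so there is no in-paper argument to compare against, and your attempt must be judged on its own. Your proof of (a) is correct modulo a small slip: after $\sum_J(-1)^{|J|}=0$ kills the $\hh(l)$-contribution, the surviving sum is $\sum_{J\subset\cV}(-1)^{|J|+1}\hh(l+E_J)$ with $J=\emptyset$ \emph{included} (its term is $-\hh(l)$), and that full sum is $\pp_l$ by (\ref{htopi}); your displayed formula restricted to $J\neq\emptyset$ differs from $\pp_l$ by $\hh(l)$. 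Part (c) is correct: the pairing $J\leftrightarrow J\cup\{i\}$ and the witness criterion of Lemma~\ref{eq:semi} work exactly as you describe, in both directions.

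Parts (b) and (d) are plans, not proofs. In (b) the bulk computation on $\{l\geq c\}$ is fine, but the inclusion-exclusion over the strips $\{l_i<c_i\}$ is not carried out, and (\ref{eq:redHilb}) does not do the job you assign it: it relates $\hh$ to a subcurve's Hilbert function via the slice $l_i=0$, not via the full strip $0\leq l_i<c_i$. The fact that actually drives rationality, absent from your write-up, is that for $l_i\geq c_i$ the witness $s$ with $s_i=l_i$ and $s_j=\max(l_j,c_j)$ lies in $c+\Z^r_{\geq 0}\subset\calS$, so Lemma~\ref{eq:semi} forces $\hh(l+E_i)=\hh(l)+1$; stratifying $\Z^r_{\geq 0}$ by which coordinates sit below the conductor then exhibits $P^m$ as a finite sum of geometric tails with the stated denominator, and this must be written out. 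In (d) the substitution $l\mapsto c-l$ does not mesh with $(1-q)\pp^m_l(q)=\sum_J(-1)^{|J|}q^{\hh(l+E_J)}$ the way you suggest: applying $\hh(l)-\hh(c-l)=|l|-\delta$ to $\hh(c-l+E_J)$ produces $\hh(l-E_J)$, not $\hh(l+E_J)$. The workable dual shift is $l\mapsto c-{\bf 1}-l$ (the same shift as in Delgado's symmetry), which after replacing each $J$ by its complement in $\cV$ yields $\hh(l+E_{\cV\setminus J})$ together with a $q$-exponent offset of $-|l|-|\cV\setminus J|+\delta$; matching this against the transformed denominator $\prod_i(1-t_i^{-1})$ and the prefactor $q^{-\delta}\prod_i t_i^{-c_i}$ is the real content, and none of that bookkeeping appears. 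Citing the arc-space proof of \cite{cdg3} is a legitimate exit, but then (b) and (d) are referenced rather than proved.
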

For a combinatorial formula, valid for plane curve singularities, in terms
of the embedded resolution graph, see \cite{cdg3}. The formula from  \cite{cdg3} was simplified in \cite{Gorsky}.

\subsection{The case of plane curves singularities; Poincar\'e series versus Alexander polynomial.}\label{ss:PA}\ \\
In the case of plane curve singularities the above invariants ($H$, $P$, $\calS$) can be compared with the embedded topological
type of the link of $(C,o)$ embedded into $S^3$. One has the following statement:
\begin{theorem} [\cite{cdg2,cdg}]
\label{Poincare vs Alexander}
Let $\Delta(t_1,\ldots,t_r)$ be the multi-variable Alexander polynomial of the link of $(C,o)$.
If \,$r=1$ then $P(t)(1-t)=\Delta(t)$,
while  $P(\bt)=\Delta(\bt))$  if \,$r>1$.
\end{theorem}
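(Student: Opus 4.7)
The plan is to derive both $P(\bt)$ and $\Delta(\bt)$ from a single embedded resolution of $(C,o)$ and recognize them as the same product expansion over the exceptional divisor.

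Fix an embedded resolution $\pi:(\tilde{X},E)\to (\bC^2,o)$ of $(C,o)=\cup_{i=1}^r(C_i,o)$, with exceptional components $\{E_v\}_{v\in\cV(\pi)}$ and strict transforms $\tilde{C}_i$. For each $v$ let $m_v=(m_{v,1},\dots,m_{v,r})\in\Z_{\geq 0}^r$ record the divisorial data $m_{v,i}=\mathrm{mult}_{E_v}\pi^{*}(f_i)$, where $f_i$ is a defining equation of $(C_i,o)$; equivalently, $m_{v,i}$ is the value taken by the divisorial valuation along $E_v$ on $f_i$. Let $E_v^{\circ}\subset E_v$ be obtained by removing the intersection points with the other components of $E$ and with the strict transforms $\tilde{C}_i$.

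First I would establish the resolution formula for the Alexander polynomial. By the Eisenbud--Neumann splice-diagram formula (or, equivalently, A'Campo's formula applied to the total transform), the multivariable Alexander polynomial of an algebraic link satisfies
\begin{equation*}
\Delta(\bt)\cdot (1-t)^{\varepsilon}=\prod_{v}\bigl(1-\bt^{m_v}\bigr)^{-\chi(E_v^{\circ})},
\end{equation*}
with $\varepsilon=1$ when $r=1$ and $\varepsilon=0$ when $r>1$; here $\bt^{m_v}=\prod_i t_i^{m_{v,i}}$. The discrepancy between the cases is exactly the homological difference between the Alexander polynomial of a knot and that of a link.

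Next I would compute $P(\bt)$ from the same resolution data by Euler-characteristic integration over the projectivized local ring, following the Campillo--Delgado--Gusein-Zade strategy. Using $\hh(l)=\hh(\max\{l,0\})$ and truncation beyond the conductor, one rewrites
\begin{equation*}
P(\bt)=\int_{\setP\calO_{C,o}}\bt^{\frakv(f)}\,d\chi,
\end{equation*}
and then stratifies $\setP\calO_{C,o}$ according to the combinatorial pattern with which the strict transform of $\pi^{*}f$ meets the exceptional components. Over a stratum corresponding to a subset $V\subset\cV(\pi)$ of divisors met with prescribed multiplicities $a_v$, the contribution to $\frakv(f)$ is $\sum_{v\in V}a_v m_v$, while the fibers are affine bundles whose Euler characteristics are controlled by $\chi(E_v^{\circ})$. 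Summing geometrically over all such patterns and using Fubini-type additivity of the Euler characteristic yields
\begin{equation*}
P(\bt)\cdot (1-t)^{\varepsilon}=\prod_{v}\bigl(1-\bt^{m_v}\bigr)^{-\chi(E_v^{\circ})},
\end{equation*}
with the same correction factor $(1-t)^{\varepsilon}$ arising from the projectivization (the $\bC^{*}$-action degenerates at the extra parameter when $r=1$).

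Comparing the two identities term-by-term in the ring of power series gives the stated equalities $P(t)(1-t)=\Delta(t)$ for $r=1$ and $P(\bt)=\Delta(\bt)$ for $r>1$. The main obstacle is the second step: carrying out the Euler-characteristic integration rigorously requires a careful choice of cylindrical/constructible stratification on $\setP\calO_{C,o}$ and a verification that the Euler-characteristic-integration formalism applies to these infinite-dimensional spaces (this is handled by truncating beyond the conductor, using that $\overline{\calF}(l)\subset\frc\subset\calO$ for $l\geq c$, so that $P(\bt)$ is detected in finite dimension). Once this is done the matching of the two product formulas is purely formal.
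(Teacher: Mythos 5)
Your overall strategy is the right one and matches the cited source: Campillo--Delgado--Gusein-Zade prove this theorem in exactly the way you sketch, by expressing $P(\bt)$ as $\int_{\setP\calO_{C,o}}\bt^{\frakv(f)}\,d\chi$, computing this Euler--characteristic integral through an embedded resolution, and recognizing the resulting product as the A'Campo/Eisenbud--Neumann formula for $\Delta(\bt)$. However, your bookkeeping of the $(1-t)^{\varepsilon}$ factor is wrong, and this is not a cosmetic issue: as written, your two displayed identities both carry the same factor $(1-t)^{\varepsilon}$ on the left, so dividing them gives $\Delta(\bt)=P(\bt)$ for \emph{all} $r$, which contradicts the statement $P(t)(1-t)=\Delta(t)$ you are trying to prove for $r=1$. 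So the step ``comparing the two identities term-by-term gives the stated equalities'' does not follow from what you wrote.

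The correct distribution of the factor is asymmetric. The A'Campo product $\prod_{v}(1-\bt^{m_v})^{-\chi(E_v^{\circ})}$ equals the monodromy zeta function, and the Euler--characteristic integral for $P(\bt)$ equals this product \emph{with no extra factor}: $P(\bt)=\prod_{v}(1-\bt^{m_v})^{-\chi(E_v^{\circ})}$ for every $r$. (Sanity check for $r=1$, $\langle 2,3\rangle$: the product is $\frac{1-t^{6}}{(1-t^{2})(1-t^{3})}=1+\frac{t^{2}}{1-t}=\sum_{s\in\calS}t^{s}=P(t)$.) The Alexander polynomial, on the other hand, picks up a factor $(1-t)$ from the $H_{0}$--contribution of the Milnor fiber exactly when the fiber is connected, i.e.\ when $r=1$: $\Delta(\bt)\doteq (1-t)^{\varepsilon}\prod_{v}(1-\bt^{m_v})^{-\chi(E_v^{\circ})}$ with $\varepsilon=1$ for $r=1$ and $\varepsilon=0$ for $r>1$ (again for $\langle 2,3\rangle$: $(1-t)\bigl(1+\frac{t^{2}}{1-t}\bigr)=1-t+t^{2}=\Delta(t)$). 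Combining the corrected identities gives $\Delta(\bt)\doteq(1-t)^{\varepsilon}P(\bt)$, which is the theorem. Your heuristic that the correction comes from ``the $\C^{*}$-action degenerating'' in the integration step is also misplaced: the projectivization enters the computation of $P(\bt)$ identically for all $r$; the $(1-t)$ is an artifact of passing from the zeta function to $\Delta$, not of the Euler--characteristic integral.
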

Since plane curve germs are Gorenstein, the above symmetry of $P$ is compatible with the well--known symmetry
property  of the Alexander polynomials.

By \cite{Yamamoto} the multi-variable Alexander polynomial
(and hence by Theorem \ref{Poincare vs Alexander},
the Poincar\'e series $P(\bt)$) determines the embedded topological type of
$(C,o)$, in particular it determines all the series  $\{P_{C_J}\}_{J\subset \cV}$.
In particular, it determines via (\ref{hilbert}) the series $H(\bt)$ as well.

However,  the
reduction  procedure from $P$ to $P_{C_J}$ is more complicated than
the analog of (\ref{eq:redHilb}) valid for the Hilbert series (the `naive substitution').
Indeed, these reductions   are of type (see \cite{torres}):
 \begin{equation}\label{eq:redP}
P_{C_{\cV\setminus \{1\}}}(t_2,\ldots,t_r)=
P(t_1,\ldots,t_r)|_{t_1=1}\cdot \frac{1}{(1-t_2^{(C_1,C_2)}\cdots t_r^{(C_1,C_r)})}.
\end{equation}

Let us prove the identity $\hh(l)=|l|-\delta$, valid for $l\geq c$,  using
(\ref{hilbert}).
For $J=\{i\}$ we have  $\sum_{0\leq u_i\leq l_i-1}\pp^J_{u_i}=l_i-\delta(C_i,o)$.
For $J=\{i,j\}$ (since $P_{C_J}$ is a polynomial)
$\sum_l \pp^{\{i,j\}}_l=P_{C_J}(1,1)$. This equals $(C_i,C_j)$ by \eqref{eq:redP}.
By similar argument, for $|J|>2$ the contribution is zero.
Hence
$\hh(l)=\sum_i (l_i-\delta(C_i,o))-\sum_{i\not= j}(C_i, C_j)=|l|-\delta(C,o).$

Note also that $\Delta(\bt)$ can also be deduced from the splice diagram, or embedded resolution graph
of the link (or of the embedded topological type of the pair $(C,o)\subset (\C^2,0)$) \cite{EN}.
(In fact, this is the most convenient way to determine $\Delta(t)$, at least for high $r$.)

The above discussions show that in the case of plane curve singularities the invariants
$H$, $P$, $\calS$, $P^m$ are all equivalent and are complete topological invariants of the embedded link into $S^3$.

Note also that the embedded link is fibred, the first Betti number of the   (Milnor) fiber $F$ is $\mu(F)$, the Milnor number of
$(C,o)$.   It satisfies $\mu(C,o)=2\delta(C,o)-r+1$, cf. \cite{MBook}, i.e., $\delta(C,o)$ is the
genus of the link. Furthermore, $\mu(C,o)$ equals ${\rm deg}\,\Delta(t)$ if $r=1$ and
$1+{\rm deg}\,\Delta(t,\cdots, t)$ if $r>1$.

\subsection{The local lattice cohomology \cite{GorNem2015}.} \label{bek:2.22}
Consider again the space (CW complex) $\frX=(\R_{\geq 0})^r$  with its cubical decomposition. Recall that
the $q$-cubes have the form $\square=(l, I)$, with $l\in (\Z_{\geq 0})^r$,
and $I\subset \cV$, $q=|I|=\dim(\square)$, where the vertices of $(l,I)$ are $\{l+\sum_{i\in I'}E_i\}_{I'\subset I}$.

Let us consider the weight function $l\mapsto \hh(l)$, {\it provided by the Hilbert function}, and the  lattice homology associated with it.
(This should not be confused with the object defined in \ref{bek:ANweightsCurves}:
note that the weight functions are different!)
More precisely, in this case the weights of the cubes are defines as
$\hh((l,I))=\max\{\hh(l'), \ \mbox{$l'$ vertex of $(l,I)$}\}=\hh(l+E_I)$. The homological complex $\mathcal{L}^-_*$ is $\calC_*\otimes_\Z\Z[U]$,
the boundary operator is $\partial_U$ given by
$\partial _U(U^m\square)=U^m\sum_k \epsilon _k U^{\hh(\square)-\hh(\square^k)}\square^k$; it satisfies $\partial _U^2=0$.
Motivated by the Link Heegaard Floer Theory, in \cite{GorNem2015} it was introduced the {\it homological degree} of
the generators $U^m\square$ as well:
\begin{equation}\label{eq:szamdeg}\deg(U^m\square):= -2m -2\hh(\square)+\dim(\square).\end{equation}
Then $\partial _U$ decreases the homological degree by one. The homology of $(\calL^-_*,\partial _{U,*})$ is not very rich
(it is $\Z[U]$, cf. \cite{GorNem2015}), however  deep information is coded in the `{\it local lattice homology groups}'
 read from the graded version of $(\calL^-_*,\partial_{U,*})$, which is defined as follows.

First note that $\calL^-$, as a $\Z$--module has a sum decomposition $\oplus _l\calL^-(l)$, where
$\calL^-(l)$ is generated by cubes of the form $(l,I)$.
Then  one sets the multi-graded direct sum complex ${\rm gr}\,\, \calL^-=\oplus_{l\in (\Z_{\geq 0})^r}{\rm gr}_l\,\calL^-$, where
 $${\rm gr}_l\,\calL^-=\calL^-(l)/\sum_i \calL^-(l+E_i)=\Z\langle (l,I); \ I\subset \cV\rangle\otimes _{\Z}\Z[U]$$
 together with the boundary operator  ${\rm gr}_l\,\partial _U$ defined by
 $${\rm gr}_l\,\partial _U((l, I))=\sum_k (-1)^kU^{\hh(l+E_I)-\hh(l+E_{I\setminus \{k\}})}\ (l, I\setminus \{k\}).$$
 Then $H_*({\rm gr}_l\calL^-, {\rm gr}_l \partial _U)$, graded by the induced homological degree $\deg$, is called the {\it
 local lattice homology}  associated with the weight function $\hh$ and the lattice point $l$. It is denoted by ${\rm HL}^-(l)$, $l\in (\Z_{\geq 0})^r$.
 Its Poincar\'e series is denoted by
 $$P_l^{\calL^-}(s):=\sum_p \ {\rm rank}\, H_p({\rm gr}_l \, \calL^-, {\rm gr}_l\, \partial _U)\cdot s^p.$$
 In fact, cf. \cite{GorNem2015},  the complex is bigraded by
 \begin{equation}\label{eq:bideg}
 {\rm bdeg} (U^m \square):= (-2m -2\hh(\square), \, \dim(\square)\,)\in\Z^2.
 \end{equation}
 The total degree of bdeg is the homological degree $-2m-2\hh(\square)+\dim(\square)$, cf \ref{eq:szamdeg}. The operator ${\rm gr}_l\partial_U$
 has bidegree $(0,-1)$. In particular, the local lattice homology is also bigraded. Let ${\rm HL}^-_{a,b}(l)$ denote the
 $(a,b)$-homogeneous components of ${\rm HL}^-(l)$.

 In \cite{GorNem2015} the following facts are proved.

 \begin{theorem}\cite[Theorems 4.2.1 and 5.3.1]{GorNem2015}
\label{zlat}

(1)  Consider the motivic Poincar\'e series of $(C,o)$,
$P^m(\bt;q)=\sum_{l}\pp^m_{l}(q)\,\bt^{l}$.
 Then the Poincar\'e polynomial of\, ${\rm HL}^-(l)$,
satisfies
\begin{equation}
\label{hfminus}
P^{\calL^-}_{l}(-q^{-1})=q^{\hh(l)}\cdot \pp^m_{l}(q).
\end{equation}
In particular, $(-1)^{\hh(l)}\cdot \pp^m_l(-q)$ is a polynomial in $q$ with nonnegative coefficients, and
the Euler characteristic $P^{\calL^-}_l(-1)=\sum_p(-1)^p \,{\rm rank} \, H_p({\rm gr}_l\calL^-, {\rm gr}_l\partial_U)$
equals $\pp_l^m(1)=\pp_l$, the $l$-coefficient of $P(\bt)$.

(2)  $H_{-2\hh(l)-p}({\rm gr}_l\,\calL^-, {\rm gr}_l\, \partial _U)\simeq H_p({\mathbb P}\cH'(l),\Z)$. Hence $H_{*}({\rm gr}_l\,\calL^-, {\rm gr}_l\, \partial _U)$
has no $\Z$-torsion.

(3) If $H_{a,b}({\rm gr}_l\,\calL^-, {\rm gr}_l\, \partial _U)\not=0$ then necessarily $(a,b)$ has the form
$(-2\hh(l)-2|I|, |I|)$. (In other words, $a+2b=-2\hh(l)$ and $r\geq b\geq 0$.)

(4) The $U$--action on  $H_*({\rm gr}_l\,\calL^-, {\rm gr}_l\, \partial _U)$ is trivial.

\end{theorem}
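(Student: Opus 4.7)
The framework for the entire argument is the bigrading $(a, b) = (-2m - 2\hh(l + E_I),\, |I|)$ on the chain module ${\rm gr}_l \calL^-$. Since the differential ${\rm gr}_l \partial_U$ has bidegree $(0, -1)$, the first coordinate $a$ is preserved, and I would decompose the complex as a direct sum over $a$. For $a = -2(\hh(l) + k)$ with $k \geq 0$, the condition $m \geq 0$ forces $\rho_l(I) := \hh(l + E_I) - \hh(l) \leq k$ and pins the unique $U$-power $m$ down. The surviving piece is then (up to an overall degree shift) the augmented simplicial chain complex, with standard simplex-on-$\cV$ incidences, of the downward-closed subcomplex
$$\Delta_{l,k} := \{I \subseteq \cV : \rho_l(I) \leq k\}.$$
Claim (4) is then immediate: $U$ has bidegree $(-2, 0)$, and once (3) asserts that the homology is supported on the single line $a + 2b = -2\hh(l)$, the $U$-image lands on the disjoint line $a + 2b = -2\hh(l) + 2$, where the homology is zero.

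The heart of the argument is (3) together with (2), and my plan is to route them through the combinatorics of the local arrangement. By subsection \ref{ss:ARR} the function $\rho_l$ coincides with the rank function of the matroid of the central arrangement $\cH'(l)$, so $\Delta_{l,k}$ is exactly its rank-$\leq k$ complex. A shellability / Cohen--Macaulayness argument (alternatively, a Folkman-type theorem applied to the truncation of the geometric lattice of flats above rank $k$) forces $\widetilde H_*(\Delta_{l,k})$ to be concentrated in top reduced simplicial degree $k - 1$; unwinding the degree shift gives (3). For (2), I would identify the single surviving group $\widetilde H_{k-1}(\Delta_{l,k})$ with $H_k(\setP\cH'(l), \Z)$ via the Orlik--Solomon / Brieskorn description of the cohomology of an arrangement complement, passing from the affine to the projective quotient by dividing out the free $\C^*$-action.

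With (2) and (3) in hand, (1) reduces to an Euler-characteristic computation. Since the only nontrivial bidegree at total degree $-2\hh(l) - p$ is $(-2\hh(l) - 2p,\, p)$,
$$P^{\calL^-}_l(-q^{-1}) = \sum_{p \geq 0} {\rm rank}\, H_p(\setP\cH'(l)) \cdot (-q^{-1})^{-2\hh(l) - p} = q^{2\hh(l)} \sum_{p \geq 0} (-1)^p\, {\rm rank}\, H_p(\setP\cH'(l))\, q^p,$$
and the Orlik--Solomon identity $\sum_J (-1)^{|J|} q^{\rho_l(J)} = q^d \chi_{\cH'(l)}(q^{-1})$ (together with the standard $(1-q)$-factor from the passage between affine and projective complements) converts the right-hand side into $q^{\hh(l)}\pp^m_l(q)$ term by term against the definition \eqref{eq:pmot}. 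The main obstacle I anticipate lies in the middle phase: matching ranks between $\widetilde H_{k-1}(\Delta_{l,k})$ and $H_k(\setP \cH'(l), \Z)$ is combinatorial bookkeeping, but producing a canonical, sign-consistent chain-level isomorphism --- so that the twisted boundary of ${\rm gr}_l\partial_U$, whose $U$-exponents encode jumps of $\rho_l$, corresponds precisely to the Orlik--Solomon or broken-circuit differential --- requires a careful reconciliation of cube orientations with OS signs, and this is where the bulk of the technical effort will live.
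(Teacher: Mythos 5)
Your overall plan — decompose $({\rm gr}_l\calL^-,{\rm gr}_l\partial_U)$ by the first bigrading coordinate $a$, observe that the $a=-2(\hh(l)+k)$ slice is (after the degree shift $b\leftrightarrow b-1$) the augmented simplicial chain complex of the rank-truncated complex $\Delta_{l,k}=\{I\subseteq\cV:\rho_l(I)\leq k\}$, and route (2)--(3) through the combinatorics of the local arrangement — is the right skeleton, and it is essentially the route of \cite{GorNem2015}, which the paper signals in Remark~\ref{rem:fontos} by saying part (3) rests on properties of the Orlik--Solomon algebra. Your derivation of (4) from (3) is fine modulo a sign slip ($U$ has bidegree $(-2,0)$ so it \emph{lowers} $a+2b$ by $2$; the conclusion is unchanged), and the Euler-characteristic arithmetic for (1), via the Whitney identity $\sum_J(-1)^{|J|}q^{\rho_l(J)}=q^{\rho_l(\cV)}\chi_{\cH'(l)}(q^{-1})$ and the $(1-q)$ affine-to-projective factor, does close once (2) is in hand.

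There is, however, a genuine gap at the step you rely on most, and it is not the one you flag. First, the tools you name for the concentration claim do not apply to $\Delta_{l,k}$ as stated: its facets are the atom-sets of the rank-$k$ flats of the underlying matroid, which generically have varying cardinality, so $\Delta_{l,k}$ is non-pure; Cohen--Macaulayness and pure shellability are therefore unavailable, and non-pure (Bj\"orner--Wachs) shellability only gives a wedge of spheres of \emph{varying} dimensions, which is strictly weaker than concentration in degree $k-1$. Second, $\Delta_{l,k}$ is a complex on the atoms, not the order complex of the rank-truncated geometric lattice (which lives on the flats), so Folkman's theorem does not apply directly either; one must first construct a homotopy equivalence between the two (for instance via the closure map and a Quillen fiber argument, or a crosscut-type comparison) before any lattice-theoretic concentration result can be invoked, and that comparison carries the real content. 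Third, your anticipated obstacle — reconciling the twisted $U$-exponents with Orlik--Solomon signs — is a non-issue: once you restrict to a fixed $a$-slice the exponent $m=k-\rho_l(I)$ is forced, and the induced boundary is the ordinary untwisted simplicial boundary, so no sign reconciliation is needed. The actual work is establishing the isomorphism $\widetilde H_{k-1}(\Delta_{l,k})\simeq H_k(\setP\cH'(l),\Z)$ and the accompanying vanishing, which in \cite{GorNem2015} is done through the Orlik--Solomon/Brieskorn structure of the arrangement rather than through a direct appeal to shellability of $\Delta_{l,k}$.
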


\begin{remark}\label{rem:fontos}
 Theorem \ref{zlat}  was stated  in \cite{GorNem2015} for plane curves, but proofs are valid for any
algebraic curve based on the general properties of the Hilbert function.

The key statement {\it (3)} is based on properties of the Orlik--Solomon algebras associated with
the local hyperplane  arrangements.
\end{remark}

%

\section{The level filtration of the analytic lattice homology}\label{s:levfiltr}

\subsection{The submodules ${\rm F}_{-d}\bH_*(\frX,w)$.}\

Let us fix an isolated curve singularity $(C,o)$.
We consider again the space $\frX=(\R_{\geq 0})^r$, its cubical decomposition and the lattice homology
$\bH_*(C,o)=\bH_*(\frX,w)=H_*(\Lb_*,\partial_{w,*})$ associated with the weight function
$l\mapsto w_0(l):= 2\hh(l)-|l|$. For different notations see the previous  sections. 

In the present article we define several filtrations at the level of complexes and also at the level
of the lattice homology  $\bH_*(C,o)$, and we analyse the corresponding spectral sequences.

We start with the most natural $\Z$--filtration. It is induced by the grading of  the lattice points
$(\Z_{\geq 0})^r$ indexed by
$\Z_{\geq 0}$,   $l\mapsto |l|$.
 Since in the spectral sequences associated with a filtration of subspaces the literature
prefers {\it increasing} filtrations, we will follow this setup. For any $d\in \Z_{\geq 0}$ we define
the subspaces $\frX_{-d}:= \{\cup(l, I) \,:\, |l|\geq d\}$ of $\frX$. Then we have the infinite sequence of subspaces
$$\frX=\frX_0\supset \frX_{-1}\supset \frX_{-2}\supset \ \cdots. $$
Note also that for any $\square_q =(l,I)\in \frX_{-d}$ its faces belong to $\frX_{-d}$ too.
Accordingly we have the chain complexes
$\calC_*(\frX_{-d})=\Z\langle (l,I)\,:\, |l|\geq d\rangle$ endowed with the natural boundary operators
$\partial \square_q=\sum_k\varepsilon_k \, \square_{q-1}^k$, together with the chain inclusions
$\calC_*=\calC_*(\frX)\supset \calC_*(\frX_{-1})\supset  \calC_*(\frX_{-2})\supset\ \cdots.$

Then we define the graded $\Z[U]$--module chain complexes
$\Lb_*(\frX_{-d})=\calC_*(\frX_{-d})\otimes _{\Z}\Z[U]$
endowed with its natural  boundary operator $\partial _{*,w}$.
In this way  we obtain the sequence of chain complexes
$\Lb_*=\Lb_*(\frX) \supset \Lb_*(\frX_{-1})\supset \Lb_*(\frX_{-2})\supset \cdots$
and a  sequence of graded $\Z[U]$--module morphisms
$\bH_*(\frX,w)\leftarrow \bH_*(\frX_{-1},w)\leftarrow \bH_*(\frX_{-2},w)\leftarrow \cdots$

For each $b$, the map $\bH_b(\frX,w)\leftarrow \bH_b(\frX_{-d}, w)$ induced at lattice homology level is
homogeneous of degree zero. 
These morphisms provide the following filtration of  $\Z[U]$--modules in $\bH_*(\frX,w)$
$${\rm F}_{-d}\bH_*(\frX,w):={\rm im}\big( \bH_*(\frX,w)\leftarrow \bH_*(\frX_{-d},w)\, \big).$$

\begin{example}\label{ex:34}
Consider the irreducible plane curve singularity $\{x^3+y^4=0\}$ with semigroup $\calS=\langle 3,4\rangle$.
The conductor is $c=\mu=6$ and the Hilbert function and $w_0$ are shown in the next picture.

\begin{picture}(320,60)(0,0)

\put(20,50){\makebox(0,0){\small{$\calS$}}}
\put(20,35){\makebox(0,0){\small{$\hh$}}}
\put(20,20){\makebox(0,0){\small{$w_0$}}}
 \put(40,50){\line(1,0){180}}

\put(40,50){\circle*{4}} \put(100,50){\circle*{4}} \put(120,50){\circle*{4}}
\put(160,50){\circle*{4}} \put(180,50){\circle*{4}} \put(200,50){\circle*{4}}
\put(240,50){\makebox(0,0){$\ldots$}}
\put(240,35){\makebox(0,0){$\ldots$}}
\put(240,20){\makebox(0,0){$\ldots$}}

\put(60,50){\makebox(0,0){\small{$\circ$}}}
\put(80,50){\makebox(0,0){\small{$\circ$}}}
\put(140,50){\makebox(0,0){\small{$\circ$}}}


\put(40,35){\makebox(0,0){\small{$0$}}}
\put(60,35){\makebox(0,0){\small{$1$}}}
\put(80,35){\makebox(0,0){\small{$1$}}}
\put(100,35){\makebox(0,0){\small{$1$}}}
\put(120,35){\makebox(0,0){\small{$2$}}}
\put(140,35){\makebox(0,0){\small{$3$}}}
\put(160,35){\makebox(0,0){\small{$3$}}}
\put(180,35){\makebox(0,0){\small{$4$}}}
\put(200,35){\makebox(0,0){\small{$5$}}}

\put(40,20){\makebox(0,0){\small{$0$}}}
\put(60,20){\makebox(0,0){\small{$1$}}}
\put(80,20){\makebox(0,0){\small{$0$}}}
\put(100,20){\makebox(0,0){\small{$-1$}}}
\put(120,20){\makebox(0,0){\small{$0$}}}
\put(140,20){\makebox(0,0){\small{$1$}}}
\put(160,20){\makebox(0,0){\small{$0$}}}
\put(180,20){\makebox(0,0){\small{$1$}}}
\put(200,20){\makebox(0,0){\small{$2$}}}

\end{picture}

Then $\bH_{>0}(\frX)=0$ and $\bH_0(\frX)=\calt^-_{2}\oplus \calt_0(1)^2$. One has  $eu(\bH_*)=3=\delta(C,o)$.

$\bH_0=\oplus_n H_0(S_n,\Z)$ can be illustrated by  its `homological graded root' ${\mathfrak R}(C,o)$:

\begin{picture}(300,82)(80,330)

\put(180,380){\makebox(0,0){\footnotesize{$0$}}} \put(177,370){\makebox(0,0){\footnotesize{$-1$}}}
\put(177,360){\makebox(0,0){\footnotesize{$-2$}}}\put(177,390){\makebox(0,0){\footnotesize{$+1$}}}
\put(177,340){\makebox(0,0){\small{$-w_0$}}}\put(177,400){\makebox(0,0){\footnotesize{$+2$}}}
\dashline{1}(200,370)(240,370)
\dashline{1}(200,380)(240,380) \dashline{1}(200,400)(240,400)
\dashline{1}(200,360)(240,360)
 \dashline{1}(200,390)(240,390)
\put(220,345){\makebox(0,0){$\vdots$}} \put(220,360){\circle*{3}}
\put(220,390){\circle*{3}} \put(210,380){\circle*{3}}
\put(230,380){\circle*{3}} \put(220,380){\circle*{3}}
\put(220,370){\circle*{3}} \put(220,390){\line(0,-1){40}}
\put(220,370){\line(1,1){10}} \put(210,380){\line(1,-1){10}}

\put(280,381){\vector(0,-1){10}}\put(269,381){\vector(1,-1){10}}\put(291,381){\vector(-1,-1){10}}
\put(300,375){\makebox(0,0){\footnotesize{$U$}}}
\put(280,391){\vector(0,-1){8}}
\put(280,369){\vector(0,-1){8}}
\put(300,385){\makebox(0,0){\footnotesize{$U$}}}
\put(300,365){\makebox(0,0){\footnotesize{$U$}}}
\put(280,355){\makebox(0,0){$\vdots$}}

\end{picture}

This means that each vertex $v$ weighted by $-w_0(v)=-n$ in the root denotes a free summand $\Z=\Z\langle 1_v\rangle\in (\bH_0)_{-2w_0(v)}$ (i.e. a connected component of $S_{n}$), and if $[v,u]$ is an  edge
connecting the vertices $v$ and  $u$ with $w_0(v)=w_0(u)+1$, then $U(1_u)=1_v\in (\bH_0)_{-2w_0(v)}$
(i.e. the edges codify the corresponding inclusions of the connected components of $S_{n-1}$ into the connected components of $S_n$).
The weighs  $-w_0$ are marked on the left of the graph. (See also subsection \ref{ss:grroot}.)

The graded $\Z[U]$--modules  ${\rm F}_{-d}\bH_0(\frX)$ for $d>0$ are illustrated below.

\begin{picture}(500,100)(20,310)

\put(20,345){\makebox(0,0){$\vdots$}} \put(20,360){\circle*{3}}
\put(20,390){\circle*{3}} \put(10,380){\circle*{3}}
\put(30,380){\circle*{3}} \put(20,380){\circle*{3}}
\put(20,370){\circle*{3}} \put(20,390){\line(0,-1){40}}
\put(20,370){\line(1,1){10}} \put(10,380){\line(1,-1){10}}

\put(120,345){\makebox(0,0){$\vdots$}} \put(120,360){\circle*{3}}
\put(120,390){\circle*{3}} \put(110,380){\circle*{3}}
\put(130,380){\circle*{3}} \put(120,380){\circle*{3}}
\put(120,370){\circle*{3}} \put(120,390){\line(0,-1){40}}
\put(120,370){\line(1,1){10}} \put(110,380){\line(1,-1){10}}

\put(220,345){\makebox(0,0){$\vdots$}} \put(220,360){\circle*{3}}
\put(220,390){\circle*{3}} \put(210,380){\circle*{3}}
\put(230,380){\circle*{3}} \put(220,380){\circle*{3}}
\put(220,370){\circle*{3}} \put(220,390){\line(0,-1){40}}
\put(220,370){\line(1,1){10}} \put(210,380){\line(1,-1){10}}

\put(320,345){\makebox(0,0){$\vdots$}} \put(320,360){\circle*{3}}
\put(320,390){\circle*{3}} \put(310,380){\circle*{3}}
\put(330,380){\circle*{3}} \put(320,380){\circle*{3}}
\put(320,370){\circle*{3}} \put(320,390){\line(0,-1){40}}
\put(320,370){\line(1,1){10}} \put(310,380){\line(1,-1){10}}

\put(420,345){\makebox(0,0){$\vdots$}} \put(420,360){\circle*{3}}
\put(420,390){\circle*{3}} \put(410,380){\circle*{3}}
\put(430,380){\circle*{3}} \put(420,380){\circle*{3}}
\put(420,370){\circle*{3}} \put(420,390){\line(0,-1){40}}
\put(420,370){\line(1,1){10}} \put(410,380){\line(1,-1){10}}

\put(20,320){\makebox(0,0){\footnotesize{$d=1,2,3$}}}
\put(120,320){\makebox(0,0){\footnotesize{$d=4$}}}
\put(220,320){\makebox(0,0){\footnotesize{$d=5,6$}}}
\put(320,320){\makebox(0,0){\footnotesize{$d=7$}}}
\put(420,320){\makebox(0,0){\footnotesize{$d=8$}}}

\put(10,374){\line(2,5){12}}
\put(110,374){\line(1,1){20}}
\put(210,373){\line(2,1){20}}
\put(310,374){\line(1,0){20}}
\put(410,364){\line(1,0){20}}
\end{picture}
 From the root of $\bH_0$ one has to delete those edges which intersect
the `cutting line'. The $\Z[U]$--module  ${\rm F}_{-d}\bH_0(\frX)$ sits below the cutting line,
 where the $U$--action is determined from the remaining edges by the principle described above.
For $d\geq 8$ the cutting line moves down  one by one.

\end{example}

\subsection{The homological spectral sequence associated with the subspaces $\{S_n\cap \frX_{-d}\}_d$.}\label{ss:ss} \

For any fixed $n$, by an argument as in the proof of
Theorem \ref{9STR1},
the morphism  $(\bH_*(\frX_{-d}, w))_{-2n}\to (\bH_*(\frX,w))_{-2n}$ is identical with
the morphisms $  H_*(S_n\cap \frX_{-d}, \Z)\to  H_*(S_n,\Z)$ induced by the inclusion
$S_n\cap \frX_{-d}\hookrightarrow S_n$. In particular, for any fixed $n$ one can analyse the spectral sequence
associated with the filtration $\{S_n\cap \frX_{-d}\}_{d\geq 0}$ of subspaces of $S_n$.
Since $S_n$ is compact, this filtration is finite.

The spectral sequence will be denoted by
 $(E^k_{-d,q})_n\Rightarrow (E^\infty_{-d,q})_n$. Its terms are the following:
  \begin{equation}\begin{split}
  (E^1_{-d,q})_n=& H_{-d+q}(S_n\cap \frX_{-d}, S_n\cap \frX_{-d-1},\Z),\\
   (E^\infty_{-d,q})_n=& \frac{(F_{-d}\, \bH_{-d+q}(\frX))_{-2n}}
   { (F_{-d-1}\, \bH_{-d+q}(\frX))_{-2n}}=({\rm Gr}^F_{-d}\, \bH_{-d+q}(\frX)\,)_{-2n}.
   \end{split}\end{equation}

 We wish to emphasize that each $\Z$--module $(E^k_{-d,q})_{n}$ is well-defined (in the sense that in its definition
 does not depend on any choice or additional construction which might produce a certain ambiguity);
  it is an invariant of
 the curve $(C,o)$.  In fact, this is true for all the spaces $\{S_n\}_n$ as well.

In this way, for any $k\geq 1$ we associate with $(C,o)$
the  triple graded $\Z$--module $(E^k_{-d,q})_n$.
 Moreover,  for every $1\leq k\leq \infty$,
 we have the  well-defined Poincar\'e series associated with $(C,o)$:
 $$PE_k(T,Q,h):=\sum_{d,q,n} \ \rank (E_{-d,q}^k)_{n}\cdot T^dQ^nh^{-d+q}\in\Z[[T,Q]][Q^{-1},h].$$
 Note also  that all the coefficients of $PE_k(T,Q,h)$ are nonnegative.

 The differential $d_{-d,q}^k$ acts as $(E_{-d,q}^k)_{n}\to (E_{-d-k,q+k-1}^k)_{n}$,
 hence $PE_{k+1}$ is obtained from $PE_k$ by deleting terms of type
 $Q^n(T^dh^{-d+q}+T^{d+k}h^{-d+q-1})=Q^nT^dh^{-d+q-1}(h+T^k)$ ($k\geq 1$).

 We say that two series $P$ and $P'$ satisfies
 $P\geq P'$ if and only if $P-P'$ has all of its coefficients nonnegative. Then the above discussion shows that
 (cf. \cite[page 15]{McCleary})
 $$PE_1(T,Q,h)\geq PE_2(T,Q,h)\geq \cdots \geq PE_\infty(T,Q,h).$$
 If $E^2_{*,*}=E^{\infty}_{*,*}$ then $PE_1-PE_2=(h+T)R^+$, where all the coefficients of $R^+$ are nonnegative.
 In general,  $(PE_1-PE_{\infty})|_{T=1}=(h+1)\bar{R}^+$, where all the coefficients of $\bar{R}^+$ are nonnegative.
 Thus,
 \begin{equation}\label{eq:spseq}
  PE_1(T,Q,h)|_{T=1,h=-1}= PE_2(T,Q,h)|_{T=1,h=-1}= \cdots = PE_\infty(T,Q,h)|_{T=1,h=-1}.\end{equation}
Since the whole  spectral sequence is an invariant of $(C,o)$,
$$k_{(C,o)}(n):=\min\{k\,:\, (E_{*,*}^k)_n=(E_{*,*}^\infty)_n\,\} \ \mbox{and } \ \ \
k_{(C,o)}=\max_n \,k_{(C,o)}(n)$$ are  invariants of $(C,o)$ as well.

\subsection{The term $E^\infty_{*,*}$.}

First note that for any fixed $b$ and $n$ one has
\begin{equation}\label{eq:pqr}
\sum_{-d+q=b}{\rm rank}\big( E_{-d, q}^\infty\big)_{n}={\rm rank }\, H_b(S_n, \Z)={\rm rank} \,(\bH_b(C,o))_{-2n}.
\end{equation}
\begin{proposition}\label{prop:infty} \

\noindent (a) $PE_\infty(1, Q,h)=PH(Q,h)=\sum_{n\geq m_w}\, \big(\, \sum_b \, {\rm rank}\, H_b(S_n,\Z)\,h^b\, \big)\cdot Q^n.$

\noindent (b) $PE_\infty(1, Q,-1)=\sum_{n\geq m_w}\, \chi_{top}(S_n)\cdot Q^n$ \ (where $\chi_{top}$ denoted the topological Euler characteristic)

\noindent (c) Let $R$ be any  rectangle of type $R(0,c')$ with $c'\geq c$ (where $c$ is the conductor). Then
$$PE_\infty(1, Q,-1)=\frac{1}{1-Q}\cdot \sum_{\square_q\subset R}\, (-1)^q \, Q^{w(\square_q)}.$$
(d) $$\lim_{Q\to 1}\Big( PE_\infty (1,Q, -1)-\frac{1}{1-Q}\Big)=eu(\bH_*(C,o))=\delta(C,o).$$
(e) $$PE_\infty(1, Q,h)\geq PE_\infty(1, Q,h=0)\geq \frac{1}{1-Q}$$  and \
 $PE_\infty(1, Q,h=0)-1/(1-Q)$ is finitely supported.
\end{proposition}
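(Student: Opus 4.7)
The plan is to dispatch (a)--(e) in order, since each part builds on the previous one and reduces to either a combinatorial identity about the cubical decomposition of $\frX$ or to facts already recorded in \ref{cor:EUcurves} and \ref{bek:eu}.

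For (a), the key is equation (\ref{eq:pqr}): for each $b$ and $n$, summing the ranks of $(E^\infty_{-d,q})_n$ along the antidiagonal $-d+q=b$ recovers $\operatorname{rank} H_b(S_n,\Z)$. Setting $T=1$ in the definition of $PE_\infty(T,Q,h)$ collapses exactly this antidiagonal and yields $\sum_{n,b} \operatorname{rank}H_b(S_n,\Z)\,Q^n h^b = PH(Q,h)$. Part (b) is then obtained by specialising $h=-1$ and recognising $\sum_b (-1)^b \operatorname{rank} H_b(S_n,\Z)$ as the topological Euler characteristic $\chi_{top}(S_n)$.

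For (c), I would invoke Theorem \ref{cor:EUcurves}(a): for $c'\geq c$, the inclusion $S_n\cap R(0,c')\hookrightarrow S_n$ is a homotopy equivalence, so $\chi_{top}(S_n)=\chi_{top}(S_n\cap R)$. The latter is a finite cubical complex, so $\chi_{top}(S_n\cap R)=\sum_{\square_q\subset R,\,w(\square_q)\leq n}(-1)^q$. Substituting into part (b) and swapping the two summations, for each cube $\square_q\subset R$ the resulting geometric series is $\sum_{n\geq w(\square_q)}Q^n=Q^{w(\square_q)}/(1-Q)$, which gives the claimed formula. (The swap is legitimate because the sum over cubes is finite for each coefficient of $Q^n$.)

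For (d), multiply through by $(1-Q)$ and use that $R$ is contractible, so $\sum_{\square_q\subset R}(-1)^q=\chi_{top}(R)=1$. Hence
\[
  (1-Q)\Bigl(PE_\infty(1,Q,-1)-\tfrac{1}{1-Q}\Bigr) \;=\; \sum_{\square_q\subset R}(-1)^q\bigl(Q^{w(\square_q)}-1\bigr).
\]
A direct check shows $\lim_{Q\to 1}(Q^w-1)/(1-Q)=-w$ for every $w\in\Z$ (treating $w>0$, $w=0$, $w<0$ separately). Taking the limit term by term inside the finite sum yields $\sum_{\square_q\subset R}(-1)^{q+1}w(\square_q)$, which by (\ref{eq:eu}) equals $eu(\bH_*(R,w))$; by Theorem \ref{cor:EUcurves}(b)--(c) this equals $eu(\bH_*(C,o))=\delta(C,o)$.

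For (e), both inequalities are coefficient-wise. The first, $PE_\infty(1,Q,h)\geq PE_\infty(1,Q,0)$, is immediate: the difference equals $\sum_{n,b\geq 1}\operatorname{rank}(\bH_b)_{-2n}\,Q^n h^b$, whose coefficients are nonnegative. For the second, part (a) gives $PE_\infty(1,Q,0)=\sum_n \operatorname{rank}H_0(S_n,\Z)\,Q^n$. Apply Lemma \ref{9lemma3} together with Theorem \ref{cor:EUcurves}(b): $\bH_0(C,o)=\calt^-_{-2m_w}\oplus \bH_{0,\mathrm{red}}$ with $\bH_{0,\mathrm{red}}$ of finite $\Z$-rank, so $\operatorname{rank}H_0(S_n,\Z)=1+\operatorname{rank}(\bH_{0,\mathrm{red}})_{-2n}$ for $n\geq m_w$. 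Since $w_0(0)=0$ forces $m_w\leq 0$, we get $PE_\infty(1,Q,0)=Q^{m_w}/(1-Q)+\text{(finite)}\geq 1/(1-Q)$, and the difference $PE_\infty(1,Q,0)-1/(1-Q)$ is $(Q^{m_w}-1)/(1-Q)+\text{finite}$, a Laurent polynomial in $Q$. The main obstacle is the careful bookkeeping of signs and limits in (d) — in particular the fact that $w(\square_q)$ may be negative, which requires the three-case verification of $\lim (Q^w-1)/(1-Q)=-w$ before invoking (\ref{eq:eu}).
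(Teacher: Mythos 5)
Your proposal is correct and follows essentially the same approach as the paper's proof: (a) from (\ref{eq:pqr}), (b) by specialization, (c) via the homotopy equivalence of Theorem \ref{cor:EUcurves}(a) and the cubical Euler characteristic of $S_n\cap R$, (d) from (c), contractibility of $R$, and (\ref{eq:eu}), and (e) from Theorem \ref{cor:EUcurves}(b). You merely spell out a few steps that the paper leaves implicit, such as the three-case verification of $\lim_{Q\to 1}(Q^w-1)/(1-Q)=-w$ and the appeal to Lemma \ref{9lemma3} in (e).
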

\begin{proof}
{\it (a)} follows from (\ref{eq:pqr}), while {\it (b)} from {\it (a)}. Next we prove {\it (c)}.

Set  $Eu(Q):=\sum_{\square_q\subset R}\, (-1)^q \, Q^{w(\square_q)}\in \Z[Q,Q^{-1}]$ and write
$Eu(Q)/(1-Q)$ as $\sum _{n\geq m_w} a_nQ^n$.
Then
$$a_n=\sum_{\square_q\subset R,\, w(\square_q)\leq n}\,(-1)^q=\chi_{top}(S_n\cap R).$$
But $S_n\cap R\hookrightarrow S_n$ is a homotopy equivalence, cf. \ref{cor:EUcurves}. Then use part {\it (b)}.

For {\it (d)} use {\it (c)}, $\sum_{\square_q\subset R}(-1)^{q}=1$
and $eu(\bH_*(C,o))=\sum_{\square_q\subset R}(-1)^{q+1}w(\square_q)$, cf. (\ref{eq:eu}).
For {\it (e)} use Th. \ref{cor:EUcurves}.
\end{proof}
\begin{remark}\label{rem:hat}
From part {\it (b)-- (c)} of the above proposition
$$\sum_{\square_q\subset R}\, (-1)^q \, Q^{w(\square_q)}=\sum_n\chi_{top}(S_n)(Q^n-Q^{n+1})=\sum_n \chi_{top}(S_n,S_{n-1})
Q^n=\sum_n\sum_b (-1)^b {\rm rank}(\hat{\bH}_b)_{-2n}Q^n.
$$
This, in fact  says, that the bigraded $\hat{\bH}$ is the categorification of
$\sum_{\square_q\subset R}\, (-1)^q \, Q^{w(\square_q)}$.

The above identity  for $Q=1$ gives (\ref{eq:hateu}).
\end{remark}

\subsection{The case of irreducible curves.}\label{ss:irredu}

If $r=1$ then $\frX=[0,\infty)$, and any non-empty $S_n $ (for $n\geq m_w$) is a union of intervals
$\cup_{\lambda  \in\Lambda } [a_\lambda, b_\lambda]$. Since $w(b_\lambda+1)>n$ (and, in general, $w(l+1)-w(l)\in\{1,-1\}$),
we obtain that
$w(b_\lambda)=n$  and $b_\lambda\in\calS$. In particular,
$(E^1_{-d,q})_{n}=H_{-d+q}(S_n\cap \frX_{-d}, S_n\cap \frX_{-d-1},\Z)$ is nonzero if and only if
$\Lambda\not=\emptyset$, $d=q$, and  $d=b_\lambda$ for some $\lambda$. Moreover, the differential of the spectral sequence
$d_{*,*}^k=0$ for $k\geq 1$.
Hence,
$$PE_1(T,Q,h)=PE_{\infty}(T,Q,h)=
\sum_{s\in \calS} T^s\,Q^{w(s)}\,h^0= \sum_{s\in \calS} T^s\,Q^{w(s)}.$$
This can be compared with the motivic Poincar\'e series associated with $(C,o)$ (cf. \cite[2.3]{Gorsky}):
$$P^m(t,q)=\sum_{s\in\calS}\, t^s\, q^{\hh(s)}.$$
Since $w(s)=2\hh(s)-s$,  we obtain for any $1\leq k\leq \infty$:
\begin{equation}\label{eq:pme}
PE_k(T,Q)|_{T=t\sqrt{q}, \ Q=\sqrt{q}}= P^m(t,q).
\end{equation}
Since $P^m(t, q=1)=P(t)$, we also obtain
$PE_k(T=t,Q=1,h)= P(t)$.

Since $P^m(t,q)$ is a rational function of type $\overline{P^m}(t,q)/(1-tq)$, cf.
\ref{prop:motProp}, a similar property should hold for $PE_k$ too. Indeed, since for $s\geq c$ one has $s\in\calS$ and
$w(s)=w(c)+s-c=(c-2\delta)+s-c=s-2\delta$, we obtain
$$PE_k(T,Q,h)=\sum_{s\in\calS, \, s<c}T^sQ^{w(s)}+\sum_{s\geq c}T^sQ^{s-2\delta}=\sum_{s\in\calS, \, s<c}T^sQ^{w(s)}
+\frac{T^cQ^{c-2\delta}}{1-TQ}.$$
Let us write $PE_k(T,Q,h)$ as $\overline{PE_k}(T,Q,h)/(1-TQ)$ with $\overline{PE_k}\in\Z[T,Q,Q^{-1}]$.

If $(C,o)$ is Gorenstein then $c=2\delta$ above, and the symmetry \ref{prop:motProp}{\it (c)} reads as
$$\overline{PE_k}(T^{-1}, Q)=T^{-c}\cdot \overline{PE_k}(T,Q).$$
That is, $\overline{PE_k}$ is a polynomial of degree $c$ in $T$.
Here are some examples for some torus knots:

$\calS=\langle 2,3\rangle$: \ \ \ \ \ \ \ \ \ \ \  $\overline{PE_k}(T,Q,h)=1-TQ+T^2$;

$\calS=\langle 3,4\rangle$: \ \ \ \ \ \ \ \ \ \ \  $\overline{PE_k}(T,Q,h)=1-TQ+T^3Q^{-1}-T^5Q+T^6$;

$\calS=\langle 2,2m+1\rangle$: \ \  $\overline{PE_k}(T,Q,h)=(1+t^2+\cdots +T^{2m-2})(1-TQ)+T^{2m}$;

$\calS=\langle 3,3m+1\rangle$: \ \ $\overline{PE_k}(T,Q,h)=\Big[\frac{1-T^{3m}Q^{-m}}{1-T^3Q^{-1}}+T^{3m}Q^{-m}
(1+TQ)\frac{ 1-T^{3m}Q^{m}}{1-T^3Q}\Big](1-TQ)+T^{6m}.$

\subsection{The case of plane curve singularity $x^2+y^2=0$}\label{ss:22}

In this case $r=2$, $\delta=1$, $c=(1,1)$, $\calS=\{(0,0)\}\cup \{(1,1)+(\Z_{\geq 0})^2\}$.
The Hilbert finction $\hh$, the weight function $w$ in the rectangle $R((0,0), (4,4))$,
and the homological graded root are the following

\begin{picture}(320,80)(-50,-20)
\put(-40,30){\makebox(0,0){$\hh:$}}
\put(140,30){\makebox(0,0){$w:$}}

\put(-15,0){\line(1,0){90}}
\put(-5,-10){\line(0,1){60}}

\put(5,-5){\makebox(0,0){\small{$0$}}}
\put(20,-5){\makebox(0,0){\small{$1$}}}
\put(35,-5){\makebox(0,0){\small{$2$}}}
\put(50,-5){\makebox(0,0){\small{$3$}}}
\put(65,-5){\makebox(0,0){\small{$4$}}}

\put(-10,5){\makebox(0,0){\small{$0$}}}
\put(-10,15){\makebox(0,0){\small{$1$}}}
\put(-10,25){\makebox(0,0){\small{$2$}}}
\put(-10,35){\makebox(0,0){\small{$3$}}}
\put(-10,45){\makebox(0,0){\small{$4$}}}

\put(5,5){\makebox(0,0){\small{$0$}}}
\put(5,15){\makebox(0,0){\small{$1$}}}
\put(5,25){\makebox(0,0){\small{$2$}}}
\put(5,35){\makebox(0,0){\small{$3$}}}
\put(5,45){\makebox(0,0){\small{$4$}}}

\put(20,5){\makebox(0,0){\small{$1$}}}
\put(20,15){\makebox(0,0){\small{$1$}}}
\put(20,25){\makebox(0,0){\small{$2$}}}
\put(20,35){\makebox(0,0){\small{$3$}}}
\put(20,45){\makebox(0,0){\small{$4$}}}

\put(35,5){\makebox(0,0){\small{$2$}}}
\put(35,15){\makebox(0,0){\small{$2$}}}
\put(35,25){\makebox(0,0){\small{$3$}}}
\put(35,35){\makebox(0,0){\small{$4$}}}
\put(35,45){\makebox(0,0){\small{$5$}}}

\put(50,5){\makebox(0,0){\small{$3$}}}
\put(50,15){\makebox(0,0){\small{$3$}}}
\put(50,25){\makebox(0,0){\small{$4$}}}
\put(50,35){\makebox(0,0){\small{$5$}}}
\put(50,45){\makebox(0,0){\small{$6$}}}

\put(65,5){\makebox(0,0){\small{$4$}}}
\put(65,15){\makebox(0,0){\small{$4$}}}
\put(65,25){\makebox(0,0){\small{$5$}}}
\put(65,35){\makebox(0,0){\small{$6$}}}
\put(65,45){\makebox(0,0){\small{$7$}}}

\put(160,0){\line(1,0){90}}
\put(170,-10){\line(0,1){60}}


\put(180,5){\makebox(0,0){\small{$0$}}}
\put(180,15){\makebox(0,0){\small{$1$}}}
\put(180,25){\makebox(0,0){\small{$2$}}}
\put(180,35){\makebox(0,0){\small{$3$}}}
\put(180,45){\makebox(0,0){\small{$4$}}}

\put(195,5){\makebox(0,0){\small{$1$}}}
\put(195,15){\makebox(0,0){\small{$0$}}}
\put(195,25){\makebox(0,0){\small{$1$}}}
\put(195,35){\makebox(0,0){\small{$2$}}}
\put(195,45){\makebox(0,0){\small{$3$}}}

\put(210,5){\makebox(0,0){\small{$2$}}}
\put(210,15){\makebox(0,0){\small{$1$}}}
\put(210,25){\makebox(0,0){\small{$2$}}}
\put(210,35){\makebox(0,0){\small{$3$}}}
\put(210,45){\makebox(0,0){\small{$4$}}}

\put(225,5){\makebox(0,0){\small{$3$}}}
\put(225,15){\makebox(0,0){\small{$2$}}}
\put(225,25){\makebox(0,0){\small{$3$}}}
\put(225,35){\makebox(0,0){\small{$4$}}}
\put(225,45){\makebox(0,0){\small{$5$}}}

\put(240,5){\makebox(0,0){\small{$4$}}}
\put(240,15){\makebox(0,0){\small{$3$}}}
\put(240,25){\makebox(0,0){\small{$4$}}}
\put(240,35){\makebox(0,0){\small{$5$}}}
\put(240,45){\makebox(0,0){\small{$6$}}}

\put(320,40){\makebox(0,0){\small{$0$}}}
\dashline[60]{1}(270,40)(310,40)
\put(290,5){\makebox(0,0){$\vdots$}}
\put(290,20){\circle*{3}}
\put(290,30){\circle*{3}}
\put(300,40){\circle*{3}}
\put(280,40){\circle*{3}}
\put(290,30){\line(0,-1){20}}
\put(290,30){\line(1,1){10}}
\put(290,30){\line(-1,1){10}}

\end{picture}

Therefore,  $\bH_{\geq 1}=0$ and  $\bH_0=\calt^-_0\oplus \calt_0(1)$.

The spaces $S_n$ for $n=0,1,2,3$ are the following (the sequence of spaces can be continued easily):

\begin{picture}(300,80)(-50,-10)

\put(10,10){\circle*{2}}\put(20,20){\circle*{2}}
\put(15,0){\makebox(0,0){$S_0$}}
\put(5,30){\makebox(0,0){\tiny{$2$}}}
\dashline[60]{1}(10,30)(30,10)
\put(-5,20){\makebox(0,0){\tiny{$0$}}}
\dashline[60]{1}(0,20)(10,10)

\put(60,10){\line(1,0){10}}\put(60,10){\line(0,1){10}}
\put(70,10){\line(0,1){20}}\put(60,20){\line(1,0){20}}
\put(60,12){\line(1,0){10}}\put(60,14){\line(1,0){10}}\put(60,16){\line(1,0){10}}\put(60,18){\line(1,0){10}}
\put(65,0){\makebox(0,0){$S_1$}}

\put(55,30){\makebox(0,0){\tiny{$2$}}}\put(55,40){\makebox(0,0){\tiny{$3$}}}
\dashline[60]{1}(60,30)(80,10)\dashline[60]{1}(60,40)(90,10)

\put(110,10){\line(1,0){20}}\put(110,10){\line(0,1){20}}
\put(130,10){\line(0,1){20}}\put(110,30){\line(1,0){20}}
\put(120,30){\line(0,1){10}}\put(130,20){\line(1,0){10}}
\put(110,12){\line(1,0){20}}\put(110,14){\line(1,0){20}}\put(110,16){\line(1,0){20}}
\put(110,18){\line(1,0){20}}\put(110,20){\line(1,0){20}}\put(110,22){\line(1,0){20}}\put(110,24){\line(1,0){20}}
\put(110,26){\line(1,0){20}}\put(110,28){\line(1,0){20}}
\put(120,0){\makebox(0,0){$S_2$}}
\put(105,40){\makebox(0,0){\tiny{$3$}}}\put(105,50){\makebox(0,0){\tiny{$4$}}}
\dashline[60]{1}(110,40)(140,10)\dashline[60]{1}(110,50)(145,15)

\put(160,10){\line(1,0){30}}\put(160,10){\line(0,1){30}}
\put(190,10){\line(0,1){20}}\put(160,40){\line(1,0){20}}
\put(180,30){\line(0,1){10}}\put(180,30){\line(1,0){10}}
\put(190,20){\line(1,0){10}}\put(170,40){\line(0,1){10}}
\put(160,12){\line(1,0){30}}\put(160,14){\line(1,0){30}}\put(160,16){\line(1,0){30}}
\put(160,18){\line(1,0){30}}\put(160,20){\line(1,0){30}}\put(160,22){\line(1,0){30}}\put(160,24){\line(1,0){30}}
\put(160,26){\line(1,0){30}}\put(160,28){\line(1,0){30}}
\put(160,30){\line(1,0){20}}\put(160,32){\line(1,0){20}}\put(160,34){\line(1,0){20}}
\put(160,36){\line(1,0){20}}\put(160,38){\line(1,0){20}}
\put(180,0){\makebox(0,0){$S_3$}}
\put(155,50){\makebox(0,0){\tiny{$4$}}}\put(155,60){\makebox(0,0){\tiny{$5$}}}
\dashline[60]{1}(160,50)(200,10)\dashline[60]{1}(160,60)(205,15)
\end{picture}

In the case of $S_0$ the two points are the lattice points $(0,0)$ and $(1,1)$, hence in their case the values $|l|$
are $d=0$ and $d=2$, hence $(E^1_{*,*})_0$ contributes in $PE_1$ with $Q^0(1+T^2)$.
In the case of $S_1$, for $d=2$ we have a relative 1-cycle, and for $d=3$ two relative 0-cycles.
Hence $(E^1_{*,*})_1$ (i.e. $S_1$)  contributes with $Q(T^2h^1+2T^3h^0)$.
Similarly, the contribution of $S_2$ is $Q^2(2T^3h^1+3T^4h^0)$,
of $S_3$ is $Q^3(3T^4h^1+4T^5h^0)$, and so on.  Therefore,
$$PE_1(T,Q,h)=1+T^2(1+2TQ+3T^2Q^2+\cdots )+T^2Qh^1(1+2TQ+3T^2Q^2+\cdots )=1+\frac{T^2(1+Qh)}{(1-TQ)^2}.$$
  The entries of the page $(E^1_{*,*})_{n}$ for $n=0,1,2$ are the following:

  \begin{picture}(300,100)(-40,-20)

  \put(10,10){\vector(1,0){70}}\put(60,0){\vector(0,1){60}}
  \dashline[200]{1}(50,0)(50,60)\dashline[200]{1}(40,0)(40,60)\dashline[200]{1}(30,0)(30,60)
  \dashline[200]{1}(20,0)(20,60)\dashline[200]{1}(10,0)(10,60)
   \dashline[200]{1}(10,20)(70,20)\dashline[200]{1}(10,30)(70,30)\dashline[200]{1}(10,40)(70,40)
  \dashline[200]{1}(10,50)(70,50)\dashline[200]{1}(10,60)(70,60)
  \put(60,70){\makebox(0,0){\footnotesize{$q$}}} \put(80,15){\makebox(0,0){\footnotesize$p$}}
   \put(30,-10){\makebox(0,0){\footnotesize{$n=0$}}}
    \put(60,10){\makebox(0,0){\footnotesize{$\Z$}}}
    \put(40,30){\makebox(0,0){\footnotesize{$\Z$}}}

     \put(110,10){\vector(1,0){70}}\put(160,0){\vector(0,1){60}}
  \dashline[200]{1}(150,0)(150,60)\dashline[200]{1}(140,0)(140,60)\dashline[200]{1}(130,0)(130,60)
  \dashline[200]{1}(120,0)(120,60)\dashline[200]{1}(110,0)(110,60)
   \dashline[200]{1}(110,20)(170,20)\dashline[200]{1}(110,30)(170,30)\dashline[200]{1}(110,40)(170,40)
  \dashline[200]{1}(110,50)(170,50)\dashline[200]{1}(110,60)(170,60)
  \put(160,70){\makebox(0,0){\footnotesize{$q$}}} \put(180,15){\makebox(0,0){\footnotesize$p$}}
   \put(130,-10){\makebox(0,0){\footnotesize{$n=1$}}}
    \put(140,40){\makebox(0,0){\footnotesize{$\Z$}}}
    \put(131,41){\makebox(0,0){\footnotesize{$\Z^2$}}}

   \put(210,10){\vector(1,0){70}}\put(260,0){\vector(0,1){60}}
  \dashline[200]{1}(250,0)(250,60)\dashline[200]{1}(240,0)(240,60)\dashline[200]{1}(230,0)(230,60)
  \dashline[200]{1}(220,0)(220,60)\dashline[200]{1}(210,0)(210,60)
   \dashline[200]{1}(210,20)(270,20)\dashline[200]{1}(210,30)(270,30)\dashline[200]{1}(210,40)(270,40)
  \dashline[200]{1}(210,50)(270,50)\dashline[200]{1}(210,60)(270,60)
  \put(260,70){\makebox(0,0){\footnotesize{$q$}}} \put(280,15){\makebox(0,0){\footnotesize$p$}}
   \put(230,-10){\makebox(0,0){\footnotesize{$n=2$}}}
    \put(231,51){\makebox(0,0){\footnotesize{$\Z^2$}}}
    \put(221,51){\makebox(0,0){\footnotesize{$\Z^3$}}}

  \end{picture}

Note that for different $n$'s the corresponding spectral sequences do not interfere, they run independently.
 Since $\bH_{\geq 1}=0$, $E_{-d,q}^\infty=0$ for $-d+q>0$, hence $(E^1_{-1-n, 2+n})_{n}=\Z^n$ should be injected  by the
 differential $(d^1_{-1-n, 2+n})_{n}$.
 Then $(E_{*,*}^2)_n=(E_{*,*}^\infty)_n$, and
 $$PE_\infty(T,Q,h)=1+T^2 +T^3Q+T^4Q^2+\cdots = 1+\frac{T^2}{1-TQ},$$
 compatibly  with the filtration $\{{\rm Gr}_{-d}^F \,\bH_0(C,o)\}_d$ of $\bH_0(C,o)$ (which can be determined
  similarly as the filtration in Example \ref{ex:34}).

This example  shows that in general it can happen that $(E_{*,*}^1)_n\not = (E_{*,*}^\infty)_n$, i.e. $k_{(C,o)}(n)\not=1$.
Also, even if $\bH_{\geq m}(C,o)=0$ for some $m$, the first page might have nonzero terms with $-d+q\geq m$.

Let us return back to $PE_1(T,Q,h)$ and consider
$$\overline{PE_1}(T,Q,h)=(1-TQ)^2+T^2(1+Qh).$$
This can be compared with the motivic multivariable Poinvar\'e polynomial of $(C,o)$,
$\overline{P^m}(t_1,t_2,q)=1-qt_1-qt_2+qt_1t_2 $, cf. \cite[5.1]{Gorsky}.
Indeed
$$\overline{PE_1}(T,Q,h)|_{T\to t\sqrt{q}, \ Q\to \sqrt{q}, \ h\to -\sqrt{q}}=\overline{P^m}(t,t,q).$$
This will be generalized and proved for any $(C,o)$ in Theorem \ref{th:PP}.

 Above all  the differentials have `maximal ranks', but this is not the case in general, see Example \ref{ex:decsing2}.

\begin{remark}\label{rem:3.5.1}
The graded  lattice homology, or $PE_\infty(T,Q,h)$, contains several additional information compared with
the lattice homology
$\bH_*(C,o)$. E.g., in the irreducible case $PE_\infty(T,Q,h)=\sum_{s\in\cS}T^sQ^{w(s)}$, hence
from $PE_\infty$ one can recover the semigroup $\cS$. However, this is not possible from $\bH_*$.

Take for example the semigroups $\langle 4,5,7\rangle$ and $\langle 3,7,8\rangle$.
They have different $PE_k$ (for all $1\leq k\leq \infty$) but
they have identical $\Z[U]$--module   lattice homology $\bH_0=\et^-_{4}\oplus \et_{2}(1)\oplus \et_{0}(1)$.
In particular, for the two cases the series $PE_k(T=1,Q)$ are the same as well.
Merely in  $\bH_*$ (or in $PE_1(T=1,Q)$)
the values of the level filtration are not coded. (For more see also Example \ref{ex:irredY1}.)
\end{remark}

\subsection{The spectral sequence as decoration of $\mathfrak{R}(C,o)$.}\label{ss:decroot}

Above, for any fixed $n\geq m_w$, we considered the space $S_n$, its filtration
$\{S_n\cap\frX_{-d}\}_d$ and the homological spectral sequence $(E^k_{*,*})_n$.
We can define the generating function of the corresponding ranks
$$PE_k(T,h)_n =\sum_{d,q} \,{\rm rank}(E^k_{-d,q})_nT^dh^{-d+q}\in \Z[[T]][h],$$
which satisfies $\sum_n  PE_k(T,h)_n Q^n=PE_k(T,Q,h)$.

Now, we can proceed as in subsection \ref{ss:grroot}: we can replace the $\Z$--grading given by $n$
(or, the generating functions indexed by $n$) by an index set given by the vertices of the graded root.

Indeed, consider the connected components $\{S^v_n\}_v$ of $S_n$, where $v$ runs over the vertices of
${\mathfrak{R}}$ with $w_0(v)=n$. Then one can consider its filtration
$\{S_n^v\cap\frX_{-d}\}_d$ and the corresponding homological spectral sequence $(E^k_{*,*})^v_n$.
In this way the spectral sequence $\{(E^k_{*,*})^v_n\}_{k\geq 1}$, indexed by $v\in\cV(\mathfrak{R})$,
 appears as the decoration of the (vertices of the) graded root $\mathfrak{R}$.

 Clearly, $\oplus_{v:w_0(v)=n}\, (E^k_{*,*})^v_n=(E^k_{*,*})_n$. At Poincar\'e series level
 we have generating functions
 $$PE_k(T,h)^v_{w_0(v)}= \sum _{d,q}{\rm rank}(E^k_{-d,q})^v_{w_0(v)}T^dh^{-d+q}\in \Z[[T]][h],$$
 with $\sum_{v\,:\, w_0(v)=n}PE_k(T,h)^v_{w_0(v)}=PE_k(T,h)_n$.

 The decoration of $\cV(\mathfrak{R})$ by the $E_1$ pages
  can be improved even more. In section \ref{s:LFilt} we will consider a lattice filtration
 and the corresponding series ${\bf PE}_1({\bf T},Q,h)=\sum_n {\bf PE}_1({\bf T},h)_nQ^n$
  and  ${\bf PE}_1(T_1=T, \ldots, T_r=T,h)_n=PE_1(T,h)_n$.
   Similarly as above, each ${\bf PE}_1({\bf T},h)_n$ has a decomposition
 ${\bf PE}_1({\bf T},h)_n=\sum _{v\,:\, w_0(v)=n}{\bf PE}_1({\bf T},h)^v_{w_0(v)}$, where
$ {\bf PE}_1({\bf T},h)^v_{w_0(v)}$ is indexed by the vertices of $\mathfrak {R}$.

 \section{The lattice filtration and the multigraded  $E_{*,*}^1$.}\label{s:LFilt}

 \subsection{The improved first page of the spectral sequence.} \

Recall that for any fixed $n\geq m_w$ the spectral sequence $E_{*,*}^k$ ($k\geq 0$)
associated with the {\it level filtration}  $\{S_n\cap \frX_{-d}\}_{d\geq 0}$  of $S_n$
has its first terms
 $$(E^0_{-d,q})_{n}=\calC_{-d+q}(S_n\cap \frX_{-d}, S_n\cap \frX_{-d-1}),\ \ \
 (E^1_{-d,q})_{n}=H_{-d+q}(S_n\cap \frX_{-d}, S_n\cap \frX_{-d-1},\Z).$$
 The point is that $\calC_{-d+q}(S_n\cap \frX_{-d}, S_n\cap \frX_{-d-1})$  is generated by
 cubes of the form $\square =(l, I)$ with $|l|=d$ and $w((l,I))\leq n$. This automatically provides a direct sum decomposition
 as follows.

 For any $l=\sum_il_iE_i\geq 0$ define $\frX_{-l}=\prod_i [l_i,\infty)$  with its cubical decomposition
 $\cup\{(l',I)\,:\, l'\geq l,\ I\subset\cV\}$. Then
 \begin{equation}\label{eq:sum}
 (E^1_{-d,q})_{n}= \bigoplus_{l\in\Z^r_{\geq 0},\, |l|=d}\
 (E^1_{-l,q})_{n}, \ \mbox{where} \ (E^1_{-l,q})_{n}:=
 H_{-d+q}(S_n\cap \frX_{-l}, S_n\cap \frX_{-l}\cap \frX_{-d-1},\Z).\end{equation}
 Accordingly to this direct sum decomposition  we also define
 $${\bPE}_1({\bf T}, Q,h)={\bPE}_1(T_1, \ldots , T_r, Q,h):=\sum_{l\in\Z^r_{\geq 0},\, n,q}\ \rank\big((E^1_{-l,q})_{n}\big)\cdot
 T_1^{l_1}\cdots T_r^{l_r}\, Q^n\, h^{-|l|+q}.$$
 Clearly, ${\bPE}_1(T_1=T,\ldots, T_r=T, Q,h)=PE_1(T,Q,h)$.

\begin{example}\label{ex:22}
Assume that $(C,o)$ is the plane curve singularity $x^2+y^2=0$, cf. \ref{ss:22}. Then using the
shape of the spaces $\{S_n\}_{n\geq 0}$ from \ref{ss:22}, we deduce
$$\bPE_1(T_1,T_2, Q,h)= 1+\frac{T_1T_2(1+Qh)}{(1-T_1Q)(1-T_2Q)}=
\frac{\overline{\bPE_1}(T_1,T_2, Q,h)}{(1-T_1Q)(1-T_2Q)}.$$
Then
$$\overline{\bPE_1}(T_1,T_2, Q,h)|_{T_1\to t_1\sqrt{q},\ T_2\to t_2\sqrt{q},\ Q\to \sqrt{q},\ h\to -\sqrt{q}}=
1-qt_1-qt_2+qt_1t_2 = \overline{P^m}(t_1,t_2,q).$$
\end{example}
Our goal is to extend  this relation  for any $(C,o)$.
More precisely, we will prove that $\bPE_1({\bf T}, Q,h)$ and the multivariable motivic Poincar\'e
series $P^m({\bf t}, q)$ determine each other.

\begin{theorem}\label{th:PP} (a) For any fixed $l$ one has the isomorphism
\begin{equation}\label{eq:PP}
 H_{b}(S_n\cap \frX_{-l}, S_n\cap \frX_{-l}\cap\frX_{-|l|-1})=
 H_{-2n+2\hh(l)-2|l|, b}({\rm gr}_l\calL^-,{\rm gr}_l\partial_U).\end{equation}
In particular, $H_{b}(S_n\cap \frX_{-l}, S_n\cap \frX_{-l}\cap\frX_{-|l|-1})$ has no $\Z$--torsion
(cf. Theorem \ref{zlat}{\it (2)}).

(b) If $H_{b}(S_n\cap \frX_{-l}, S_n\cap \frX_{-l}\cap\frX_{-|l|-1})\not=0$ then necessarily
$n=w(l)+b$.

(c) The next morphism (induced by the inclusion $S_n\hookrightarrow S_{n+1}$) is trivial:
$$ H_{b}(S_n\cap \frX_{-l}, S_n\cap \frX_{-l}\cap\frX_{-|l|-1})
\to H_{b}(S_{n+1}\cap \frX_{-l}, S_{n+1}\cap \frX_{-l}\cap\frX_{-|l|-1}).$$

(d)
$$\bPE_1({\bf T}, Q,h)|_{T_i\to t_i\sqrt{q},\ Q\to \sqrt{q},\ h\to -\sqrt{q}}= P^m({\bf t}, q).$$
In particular,
$$\bPE_1({\bf T}, Q,h)|_{T_i\to t_i,\ Q\to 1,\ h\to -1}= P({\bf t}).$$

(e) Write  $P^m(\bt,q)$ as $\sum_l\pp^m_l(q)\cdot\bt^l$.  Then each $\pp^m_l(q)$
can be written in a unique way in  the form
$$\pp^m_l(q)=\sum_{k\in\Z_{\geq 0} }\pp^m _{l,k}q^{k+\hh(l)}, \ \ (\pp^m_{l,k}\in\Z).$$
In fact,
\begin{equation}\label{eq:HLFUJ}
(-1)^k\pp^m_{l,k}={\rm rank}\,
 H_{-2\hh(l)-k}({\rm gr}_l\calL^-,{\rm gr}_l\partial_U).\end{equation}

(f) Write $P^m(\bt,q)=\sum_l\ \sum_{k\in\Z_{\geq 0} }\pp^m _{l,k}q^{k+\hh(l)}\cdot \bt^l$. Then
$$\bPE_1({\bf T}, Q,h)= \sum_l\ \sum_{k\in\Z_{\geq 0} }\pp^m _{l,k}\ {\bf T}^l Q^{w(l)}\cdot (-Qh)^k.$$

(g) In particular, $\bPE_1$ is a rational function of type
$$\overline{\bPE_1}({\bf T}, Q, h)/\prod_i(1-T_iQ), \ \ \mbox{where} \ \ \overline{\bPE_1}({\bf T}, Q, h)\in\Z[{\bf T}, Q, Q^{-1}, h].$$

(h) The ${\bf T}$-support of ${\bPE_1}$ is exactly $\calS$:  if we write
${\bPE_1}$ as $\sum_l {\mathfrak{ pe}}_l(Q,h){\bf T}^l$, then $\mathfrak{pe}_l\not\equiv 0$ if and only if \ $l\in\calS$.

(i) In the Gorenstein case, after substitution $h=-Q$ one has the symmetry
 $$\overline{\bPE_1}({\bf T}, Q, h=-Q)|_{T_i\to T_i^{-1}}=\prod_i T_i^{-c_i}\cdot \overline{\bPE_1}({\bf T}, Q, h=-Q).$$
\end{theorem}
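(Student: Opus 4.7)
The plan is to prove (a) by identifying, for each fixed $n$ and $l$, the relative cubical complex with a bigraded slice of the local lattice complex ${\rm gr}_l\calL^-$, from which (b) and (c) drop out of Theorem \ref{zlat}. Reading off (e) from Theorem \ref{zlat}(1) and substituting (a)-(b) into the definition of $\bPE_1$ will give (f) and, via a direct specialization, (d). The remaining parts (g), (h), (i) follow from (f) together with the corresponding properties of $P^m$ listed in Proposition \ref{prop:motProp}.

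For (a), the generators of $C_*(S_n\cap\frX_{-l},\,S_n\cap\frX_{-l}\cap\frX_{-|l|-1})$ are the cubes $(l',I)$ with $l'\ge l$ and $|l'|\le |l|$, which forces $l'=l$; the remaining constraint is $w((l,I))\le n$. The key combinatorial step is to establish
$$w((l,I))=\hh(l+E_I)+\hh(l)-|l|,$$
which holds because, by $\hh(l+E_i)-\hh(l)\in\{0,1\}$, any subset $I^*\subseteq I$ realizing the maximum in the definition of $w((l,I))$ must satisfy $\hh(l+E_{I^*})=\hh(l+E_I)$ and $|I^*|=\hh(l+E_I)-\hh(l)$. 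Consequently, $w((l,I))\le n$ is equivalent to $m:=n-\hh(l)+|l|-\hh(l+E_I)\ge 0$, i.e.\ to $U^m(l,I)$ being a generator of the $(-2n+2\hh(l)-2|l|,\,|I|)$-bigraded summand of ${\rm gr}_l\calL^-$. The bijection $(l,I)\leftrightarrow U^m(l,I)$ intertwines the relative cubical boundary with ${\rm gr}_l\partial_U$ (the $l+E_k$-faces vanish in the quotient), giving (a). Part (b) is then immediate from Theorem \ref{zlat}(3): nonvanishing forces $a=-2\hh(l)-2b$, which with $a=-2n+2\hh(l)-2|l|$ rewrites as $n=w(l)+b$. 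Part (c) follows either from (b) (source and target cannot both be nonzero for the same $n,l,b$) or from the triviality of the $U$-action in Theorem \ref{zlat}(4).

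Combining Theorem \ref{zlat}(1) with the nonvanishing constraint $a=-2\hh(l)-2b$ gives
$$\pp^m_l(q)=q^{\hh(l)}\sum_k(-1)^k\,\rank H_{-2\hh(l)-k}({\rm gr}_l\calL^-)\,q^k,$$
which is (e). Inserting (a)-(b) into the definition of $\bPE_1$ and relabeling $k=b$, $n=w(l)+k$, produces
$$\bPE_1({\bf T},Q,h)=\sum_{l,k}\pp^m_{l,k}\,{\bf T}^l\,Q^{w(l)}\,(-Qh)^k,$$
which is (f). Then (d) is the specialization $T_i\to t_i\sqrt q,\ Q\to\sqrt q,\ h\to-\sqrt q$, under which $T^l\,Q^{w(l)}\,(-Qh)^k$ collapses to $t^l\,q^{\hh(l)+k}$, so the whole sum becomes $\sum_l t^l\pp^m_l(q)=P^m({\bf t},q)$.

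Statement (h) is immediate from (f) and Proposition \ref{prop:motProp}(c). For (i), the $h=-Q$ specialization in (d) gives $\overline{\bPE_1}({\bf T},Q,-Q)=\overline{P^m}(T/Q,Q^2)$, and Proposition \ref{prop:motProp}(d)'s symmetry under $t_i\to(qt_i)^{-1}$, combined with $2\delta=|c|$ in the Gorenstein case to cancel the $Q^{|c|-2\delta}$ prefactor, translates directly into the required identity under $T_i\to T_i^{-1}$. For rationality (g), I will combine (f) with the stable form $\pp^m_l(q)=q^{\hh(l)}f(q)$ for $l\ge c$ (where $f$ depends only on $r$): the alternating-sign identity $\sum_{I\subseteq\cV}(-1)^{|I|}=0$ kills the contribution of $\bPE_1\cdot\prod_i(1-T_iQ)$ in the deep stable region $L\ge c+E_{\cV}$, and a stratification by which coordinates lie below $c$ reduces the residual to finitely many terms, using the bounded $h$-degree of $R_l(x)=\pp^m_l(x)/x^{\hh(l)}$ (at most $r-1$, read off from the formula in \ref{ss:MP}). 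The main technical obstacle is the combinatorial identity for $w((l,I))$ and the compatibility of the two differentials; once (a) is in place, the remaining parts reduce to routine bookkeeping combined with the known structure of $P^m$.
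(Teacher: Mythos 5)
Your proof follows essentially the same route as the paper's: identify the relative cubical complex $C_*(S_n\cap\frX_{-l},\,S_n\cap\frX_{-l}\cap\frX_{-|l|-1})$ with a bigraded slice of $({\rm gr}_l\calL^-,{\rm gr}_l\partial_U)$ via a weight identity, then read off parts (b)--(i) from Theorem \ref{zlat} and Proposition \ref{prop:motProp} by bookkeeping. Your key identity $w((l,I))=\hh(l+E_I)+\hh(l)-|l|$ is exactly the paper's equation \eqref{eq:relweights} rewritten via $w=2\hh-|\cdot|$, and the bijection $(l,I)\leftrightarrow U^m(l,I)$ with $m=n+|l|-\hh(l)-\hh(l+E_I)$ matches the paper's correspondence $n=-a/2+\hh(l)-|l|$.

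One small caveat: your justification of the weight identity is circular as written. The assertion that \emph{every} maximizer $I^*$ of $I'\mapsto w(l+E_{I'})$ over $I'\subseteq I$ satisfies both $\hh(l+E_{I^*})=\hh(l+E_I)$ and $|I^*|=\hh(l+E_I)-\hh(l)$ does not follow from the step-1 property $\hh(l+E_i)-\hh(l)\in\{0,1\}$ alone. The step-1 property gives the upper bound $2\hh(l+E_{I'})-|I'|\leq\hh(l)+\hh(l+E_I)$; the matching lower bound requires exhibiting a subset of $I$ realizing it, which uses the local hyperplane arrangement structure of \ref{ss:ARR} (in $V=\calF(l)/\calF(l+E_I)$ the hyperplanes $H_i=\calF(l+E_i)/\calF(l+E_I)$ intersect trivially, so one can greedily choose $\dim V$ of them with zero intersection). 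The paper sidesteps this by citing identity (7) from \cite{AgostonNemethi}. Modulo this point your argument is complete, and your treatment of (g) --- stabilization of $\pp^m_l(q)=q^{\hh(l)}f(q)$ for $l\geq c$ plus the inclusion-exclusion cancellation in the deep stable region --- is more self-contained than the paper's one-line appeal to Proposition \ref{prop:motProp}(b).
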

\begin{proof} The proof of part {\it (a)} has a similar strategy as the proof of Theorem \ref{9STR1}.

Consider the complex  $({\rm gr}_l\calL^-,{\rm gr}_l\partial_U)$  generated over $\Z$ by elements  $U^m\square$, $\square=(l,I)$, $m\geq 0$, cf. \ref{bek:2.22}.
Each generator $U^m\square$\, has a bidegree $(a,b)=(-2m-2\hh(\square), \dim(\square))=
(-2m-2\hh(l+E_I), |I|)$ and ${\rm gr}_l\partial_U$ has a bidegree $(0,-1)$. So, if we consider the subcomplex $({\rm gr}_l\calL^-,{\rm gr}_l\partial_U)_{a,*}$
generated by elements $U^m\square$ with $a$ fixed and graded by $b$,
then $({\rm gr}_l\calL^-,{\rm gr}_l\partial_U)$  decomposes as a direct sum of subcomplexes
$\oplus_a({\rm gr}_l\calL^-,{\rm gr}_l\partial_U)_{a,*}$.

On the other hand,  for any fixed $n$, let $\calC_{n,*}
:=\calC_*(
S_n\cap \frX_{-l}, S_n\cap \frX_{-l}\cap\frX_{-|l|-1})$ be the homological (cubical) complex associated with the corresponding pair. It is generated over $\Z$ by cubes of type $\square=(l,I)\subset S_n$, $I\subset \cV$.
The boundary operator is $\partial ((l,I))=\sum_k (-1)^k (l, I\setminus \{k\})$.

In the next discussion we identify the two complexes using a  correspondence
between  the integers $a$ and $n$.  Recall that in all these discussions $l$ is fixed and $a$ is even.
We claim that under the bijective correspondence
$n:=-a/2+\hh(l)-|l|$ the complexes
$({\rm gr}_l\calL^-,{\rm gr}_l\partial_U)_{a,*}$ and $(\calC_{n,*},\partial)$ are identical.

First note that in the two complexes we use two different weight functions: in $\calC_{n,*}$ (that is, in the definition of $S_n$) we use the weights $w(\square)$,
while in the definition of ${\rm gr}_l\partial_U$ we use $\hh(\square)$. The next identity appeared as identity (7)
in the proof of Theorem 3.1.8 in \cite{{AgostonNemethi}}.  
  It shows that for
restricted  cubes of type $\{(l,I)\}_{I\subset \cV}$
the  `relative weights with respect to $l$' are the same.
\begin{equation}\label{eq:relweights}
w((l,I))-w(l)=\hh((l,I))-\hh(l).
\end{equation}
Next, for any generator
$U^m(l, I)$  of ${\rm gr}_l\calL^-$, the identity $n=-a/2+\hh(l)-|l|$ together with (\ref{eq:relweights})
transforms into $n=m+w((l,I))$. Since $m\geq 0$ we get $w((l,I))\leq n$,
hence $(l, I)\subset \frX_{-l}\cap S_n$. Conversely, any  $(l, I)\subset \frX_{-l}\cap S_n$
defines the  generator $U^m(l,I)$ of ${\rm gr}_l\calL^-$ with $m:= n-w((l,I))\geq 0$.

Finally we verify that this correspondence commute with the boundary operators. Indeed, for any
$U^m(l, I)$ with $m+w((l, I))=n$,
 ${\rm gr}_l\partial_U(U^m(l,I))$ equals $$\sum_k (-1)^k U^{m+\hh((l, I))-\hh((l, I\setminus \{k\}))}(l, I\setminus \{k\})\stackrel{(\ref{eq:relweights})}{=}
\sum_k (-1)^k U^{m+w((l, I))-w((l, I\setminus \{k\}))}(l, I\setminus \{k\})$$
which by the above correspondence coincides with $\partial ((l, I))=\sum_k(-1)^k(l, I\setminus \{k\})$
considered in $S_n$ since $m+w((l, I))-w((l, I\setminus \{k\}))+w((l, I\setminus \{k\}))=n$.

{\it (b)} Use part {\it (a)} and Theorem \ref{zlat}{\it (3)}.

{\it (c)} By part {\it (b)} at least one of the modules vanish.
This part {\it (c)} is compatible (via the proof of part {\it (a)}) with
  Theorem \ref{zlat}{\it (4)}.

{\it (d)}  Using parts {\it (a)} and {\it (b)} we obtain that
  $$\bPE_1({\bf T}, Q,h)=\sum_{l,n,q} {\rm rank}\, H_{-n-q, -|l|+q} ({\rm gr}_l\calL^-, {\rm gr}_l \partial _U)\,{\bf T}^l Q^nh^{-|l|+q}. $$
  Again, by part {\it (b)}, $l, n, q$ are related by the identity
  $n=w(l)-|l|+q=2\hh(l)-2|l|+q$. Therefore,
  $$\bPE_1({\bf T}, Q,h)=\sum_{l,n} {\rm rank}\, H_{-n-|l|} ({\rm gr}_l\calL^-, {\rm gr}_l \partial _U)\, {\bf T}^l Q^nh^{n+|l|-2\hh(l)}. $$
  After the corresponding substitutions, the right hand side transforms into
  $$\sum_{l, n}\bt^lq^{-\hh(l)}\ \sum_n (-q)^{n+|l|} {\rm rank}\, H_{-n-|l|} ({\rm gr}_l\calL^-, {\rm gr}_l \partial _U),$$
  which equals $\sum_l\bt^l\pp^m_l(q)$ by (\ref{hfminus}).

  {\it (e)} By  (\ref{hfminus})  $\pp^m_l(q)=q^{-\hh(l)}\ \sum_n (-q)^{n+|l|} {\rm rank}\, H_{-n-q, -|l|+q} ({\rm gr}_l\calL^-, {\rm gr}_l \partial _U)$ with $-|l|+q\geq 0$
  (cf. part {\it (a)} or Theorem \ref{zlat}{\it (3)}). Then use $n+|l|-\hh(l)=\hh(l)-|l|+q\geq \hh(l)$.

For {\it (f)} use the identities of the proof of {\it (d)} and {\it (e)};
for
{\it (g)} the above correspondence and Proposition \ref{prop:motProp}{\it (b)};
for
{\it (h)}  Proposition \ref{prop:motProp}{\it (c)}, and for
{\it (i)}   Proposition \ref{prop:motProp}{\it (d)} and $2\delta=|c|$.
\end{proof}

\begin{remark}\label{rem:hathat}
For any cube $\square=(l,I)$ set $d(\square)=d(l):=|l|$.
Note also that in Proposition \ref{prop:infty}{\it (c)} $PE_\infty(1,Q, -1)$ can be replaced by $PE_1(1,Q, -1)$,
cf. (\ref{eq:spseq}).
Then we have the following extensions of Proposition \ref{prop:infty}{\it (c)}.
$$PE_1(T, Q,h=-1)=\frac{1}{1-Q}\cdot \sum_{\square_q\subset \frX}\, (-1)^q \, Q^{w(\square_q)}T^{d(\square_q)},$$
$${\bf PE}_1({\bf T}, Q,h=-1)=\frac{1}{1-Q}\cdot \sum_{\square_q\subset \frX}\, (-1)^q \, Q^{w(\square_q)}{\bf T}^{l}.$$
The proof is similar: write
$(\sum_{\square_q}\, (-1)^q \, Q^{w(\square_q)}{\bf T}^{l})/(1-Q)$
as $\sum _{n,l} a_{n,l}Q^n{\bf T}^l$.
Then
$$a_{n,l}=\sum_{\square_q=(l,I):\ w(\square_q)\leq n}\,(-1)^{|I|}=
\chi_{top}(S_n\cap \frX_{-l}, S_n\cap\frX_{-l}\cap  \frX_{-|l|-1},\Z).$$
%
\end{remark}

\subsection{Example. The plane curve singularity $(C,o)=\{x^3+y^3=0\}$.}\label{ss:33}

The embedded link in $S^3$ consists of three Hopf link components, $r=3$. The conductor is $c=(2,2,2)$ and $\delta=3$.
Each irreducible component is smooth, the partial Poincar\'e  series/polynomials are
$1/1-t_i$  for $C_i$, 1 for $C_{i,j}$ and $1-t_1t_2t_3$ for $C$. In particular, via (\ref{hilbert}), the Hilbert series is
\begin{equation}\label{eq:hilb33}
H({\bf t})|_{\geq 0}=
\frac{1}{\prod_{i=1}^3(1-t_i)}\cdot
\Big(  \frac{1}{1-t_1}+ \frac{1}{1-t_2}+ \frac{1}{1-t_3}-t_1t_2-t_2t_3-t_3t_1+t_1t_2t_3(1-t_1t_2t_3)\Big).
\end{equation}
The semigroup $\calS$ can be computed directly, or via Lemma \ref{eq:semi}, it is
$$\calS=\{(0,0,0)\}\cup \{(l,1,1)\}_{l\geq 1}\cup \{(1,l,1)\}_{l\geq 1}\cup \{(1,1,l)\}_{l\geq 1}\cup
\{(l_1,l_2,l_3)\}_{l_1,l_2,l_3\geq 2}.
$$
The lattice homology $\bH_*(C,o)$ can be read from the $w$-weights of the rectangle $R(0,c)$, cf.
Theorem \ref{cor:EUcurves}. Since later we will need the $w$-weights of
$R(0, c+{\bf 1})$ too, here we provide that one:

\begin{picture}(380,85)(-30,-25)

\footnotesize{
\put(65,0){\makebox(0,0){$l_1$}}
\put(-5,50){\makebox(0,0){$l_2$}}

\put(-15,0){\vector(1,0){70}}
\put(-5,-10){\vector(0,1){50}}

\put(5,-5){\makebox(0,0){\small{$0$}}}
\put(20,-5){\makebox(0,0){\small{$1$}}}
\put(35,-5){\makebox(0,0){\small{$2$}}}
\put(50,-5){\makebox(0,0){\small{$3$}}}

\put(25,-20){\makebox(0,0){\small{$l_3=0$}}}

\put(-10,5){\makebox(0,0){\small{$0$}}}
\put(-10,15){\makebox(0,0){\small{$1$}}}
\put(-10,25){\makebox(0,0){\small{$2$}}}
\put(-10,35){\makebox(0,0){\small{$3$}}}

\put(5,5){\makebox(0,0){\small{$0$}}}
\put(5,15){\makebox(0,0){\small{$1$}}}
\put(5,25){\makebox(0,0){\small{$2$}}}
\put(5,35){\makebox(0,0){\small{$3$}}}

\put(20,5){\makebox(0,0){\small{$1$}}}
\put(20,15){\makebox(0,0){\small{$0$}}}
\put(20,25){\makebox(0,0){\small{$1$}}}
\put(20,35){\makebox(0,0){\small{$2$}}}

\put(35,5){\makebox(0,0){\small{$2$}}}
\put(35,15){\makebox(0,0){\small{$1$}}}
\put(35,25){\makebox(0,0){\small{$2$}}}
\put(35,35){\makebox(0,0){\small{$3$}}}

\put(50,5){\makebox(0,0){\small{$3$}}}
\put(50,15){\makebox(0,0){\small{$2$}}}
\put(50,25){\makebox(0,0){\small{$3$}}}
\put(50,35){\makebox(0,0){\small{$4$}}}

\put(85,0){\vector(1,0){70}}
\put(95,-10){\vector(0,1){50}}


\put(125,-20){\makebox(0,0){\small{$l_3=1$}}}


\put(105,5){\makebox(0,0){\small{$1$}}}
\put(105,15){\makebox(0,0){\small{$0$}}}
\put(105,25){\makebox(0,0){\small{$1$}}}
\put(105,35){\makebox(0,0){\small{$2$}}}

\put(120,5){\makebox(0,0){\small{$0$}}}
\put(120,15){\makebox(0,0){\small{$-1$}}}
\put(120,25){\makebox(0,0){\small{$0$}}}
\put(120,35){\makebox(0,0){\small{$1$}}}

\put(135,5){\makebox(0,0){\small{$1$}}}
\put(135,15){\makebox(0,0){\small{$0$}}}
\put(135,25){\makebox(0,0){\small{$1$}}}
\put(135,35){\makebox(0,0){\small{$2$}}}

\put(150,5){\makebox(0,0){\small{$2$}}}
\put(150,15){\makebox(0,0){\small{$1$}}}
\put(150,25){\makebox(0,0){\small{$2$}}}
\put(150,35){\makebox(0,0){\small{$3$}}}


\put(185,0){\vector(1,0){70}}
\put(195,-10){\vector(0,1){50}}


\put(225,-20){\makebox(0,0){\small{$l_3=2$}}}


\put(205,5){\makebox(0,0){\small{$2$}}}
\put(205,15){\makebox(0,0){\small{$1$}}}
\put(205,25){\makebox(0,0){\small{$2$}}}
\put(205,35){\makebox(0,0){\small{$3$}}}

\put(220,5){\makebox(0,0){\small{$1$}}}
\put(220,15){\makebox(0,0){\small{$0$}}}
\put(220,25){\makebox(0,0){\small{$1$}}}
\put(220,35){\makebox(0,0){\small{$2$}}}

\put(235,5){\makebox(0,0){\small{$2$}}}
\put(235,15){\makebox(0,0){\small{$1$}}}
\put(235,25){\makebox(0,0){\small{$0$}}}
\put(235,35){\makebox(0,0){\small{$1$}}}

\put(250,5){\makebox(0,0){\small{$3$}}}
\put(250,15){\makebox(0,0){\small{$2$}}}
\put(250,25){\makebox(0,0){\small{$1$}}}
\put(250,35){\makebox(0,0){\small{$2$}}}


\put(285,0){\vector(1,0){70}}
\put(295,-10){\vector(0,1){50}}


\put(325,-20){\makebox(0,0){\small{$l_3=3$}}}


\put(305,5){\makebox(0,0){\small{$3$}}}
\put(305,15){\makebox(0,0){\small{$2$}}}
\put(305,25){\makebox(0,0){\small{$3$}}}
\put(305,35){\makebox(0,0){\small{$4$}}}

\put(320,5){\makebox(0,0){\small{$2$}}}
\put(320,15){\makebox(0,0){\small{$1$}}}
\put(320,25){\makebox(0,0){\small{$2$}}}
\put(320,35){\makebox(0,0){\small{$3$}}}

\put(335,5){\makebox(0,0){\small{$3$}}}
\put(335,15){\makebox(0,0){\small{$2$}}}
\put(335,25){\makebox(0,0){\small{$1$}}}
\put(335,35){\makebox(0,0){\small{$2$}}}

\put(350,5){\makebox(0,0){\small{$4$}}}
\put(350,15){\makebox(0,0){\small{$3$}}}
\put(350,25){\makebox(0,0){\small{$2$}}}
\put(350,35){\makebox(0,0){\small{$3$}}}

}

\end{picture}

The Gorenstein symmetry of $R(0, c)$ with respect to $c/2=(1,1,1)$ is transparent.

From $(R(0,c), w)$ one reads $\bH_{>0}=0$, and $\bH_0=\calt^-_{2}\oplus \calt_0(1)^2$ is associated with the graded root
 identical with the root of the irreducible plane curve singularity $x^3+y^4=0$, given in
 \ref{ex:34}.

%
%

Using the weighted lattice points (or the spaces $S_n$, see below), we can also see the induced  level grading
$ \{{\rm F}_{-d}\bH_0\}_d$. It turns out that it coincides with the grading $ \{{\rm F}_{-d}\bH_0\}_d$
of the singularity $x^3+y^4=0$ shown in  \ref{ex:34}.
At $d=0$ the lattice point $(0,0,0)$ is `cut out', at $d=3$ the point $(1,1,1)$ with $w=-1$,
at $d=4$ the central cross of $S_0$ is left out, at $d=6$ the lattice point $(2,2,2)$ is cut, and so on.
Hence, in this way we obtain $PE_\infty(T,Q,h)$ 
$$PE_\infty (T,Q,h)=1+T^3Q^{-1}+T^4+ \frac{T^6}{1-TQ}.$$
Regarding of the coincidence $PE_\infty (T,Q,h)(x^3+y^3)=PE_\infty (T,Q,h)(x^3+y^4)$ (for the second one see \ref{ss:irredu})
note that there exists a $\delta$-constant deformation $x^3+ty^3+y^4=0$ connecting the two germs,
and we might expect some kind of stability along such deformation of certain invariants
(see e.g. \cite{AgostonNemethi}). However, the stability of ${\rm Gr}^F_*\bH_0$ is still surprising
(not just because of the jump of the number of irreducible components, but also because
 at the level of
semigroups we do not recognise the trace of an immediate stability).

On the other hand, all the other  invariants of the two germs $x^3+y^3$ and $x^3+y^4$ are very different.

Let us start with the motivic Poincar\'e series of $x^3+y^3$. It can be determined either  via (\ref{eq:pmot}) using  the
$\hh$-function given above in (\ref{eq:hilb33}), or from \cite[5.1]{Gorsky}:
$$P^m(\bt;q)=1+\frac{q(1-q)^2\cdot t_1t_2t_3\,-\,q^3t_1t_2t_3(1-t_1)(1-t_2)(1-t_3)}{(1-t_1q)(1-t_2q)(1-t_3q)}.$$

Then $\bPE_1 ({\bf T}, Q,h)$ can be deduced using Theorem \ref{th:PP}. Indeed, by part {\it (h)} of that theorem
we have to focus on the coefficients of the ${\bf T}^l$ for $l\in \calS$ only. For $l=(0,0,0)$ we get the contribution $1$.
In the case of $l=(1,1,1)$, the coefficient of $\bt^{(1,1,1)}$ in $P^m$ is $q(1-q)^2-q^3=q(1-2q)$.
Since $\hh(1,1,1)=1$ and $w(1,1,1)=-1$ from Theorem \ref{th:PP} we get the term
${\bf T}^{(1,1,1)}Q^{-1}(1+2Qh)$ in $\bPE_1$.

For the semigroup element $s=(l+1,1,1)$ ($l\geq 1$) the coefficient in $P^m$ is
$q(1-q)^2q^l-q^{3+l}+q^{2+l}=q^{l+1}(1-q)$.
Since $\hh(s)=l+1$ and $w(s)=l-1$, we get the contribution
$T_1^{l+1}T_2T_3Q^{l-1}(1+Qh)$  in $\bPE_1$.
The contribution for $(l_1,l_2, l_3)_{l_1,l_2,l_3\geq 2}$ is ${\bf T}^{(l_1,l_2,l_3)}Q^{l_1+l_2+l_3-6}(1+Qh)^2$.

Therefore, $\bPE_1({\bf T}, Q,h)$ is
\begin{equation}\label{eq:TTT}
1+T_1T_2T_3Q^{-1}(1+2Qh)+T_1T_2T_3\cdot (\sum_{i=1}^3\frac{T_i}{1-T_iQ}\,)\cdot (1+Qh)+
\frac{T_1^2T_2^2T_3^2(1+Qh)^2}{\prod_{i=1}^3 (1-T_iQ)}.
\end{equation}
This can be deduced from the filtrations  of the spaces $S_n$ as well. Let us do the first step together with the
study of the corresponding spectral sequences. Clearly $S_{<-1}=\emptyset$.

  \begin{picture}(300,100)(-20,-20)

  \dashline[200]{1}(0,0)(20,0)\dashline[200]{1}(0,0)(0,20)\dashline[200]{1}(20,0)(20,20)
   \dashline[200]{1}(0,20)(20,20)\dashline[200]{1}(20,0)(30,10)\dashline[200]{1}(20,20)(30,30)
  \dashline[200]{1}(0,20)(10,30)\dashline[200]{1}(10,30)(30,30)
   \dashline[200]{1}(30,10)(30,30)

    \dashline[200]{1}(30,30)(50,30)\dashline[200]{1}(30,30)(30,50)\dashline[200]{1}(50,30)(50,50)
   \dashline[200]{1}(30,50)(50,50)\dashline[200]{1}(50,30)(60,40)\dashline[200]{1}(50,50)(60,60)
  \dashline[200]{1}(30,50)(40,60)\dashline[200]{1}(40,60)(60,60)
   \dashline[200]{1}(60,40)(60,60)
   \put(30,-10){\makebox(0,0){$S_{-1}$}}
   \put(30,30){\circle*{4}}

    \dashline[200]{1}(100,0)(120,0)\dashline[200]{1}(100,0)(100,20)\dashline[200]{1}(120,0)(120,20)
   \dashline[200]{1}(100,20)(120,20)\dashline[200]{1}(120,0)(130,10)\dashline[200]{1}(120,20)(130,30)
  \dashline[200]{1}(100,20)(110,30)\dashline[200]{1}(110,30)(130,30)
   \dashline[200]{1}(130,10)(130,30)

    \dashline[200]{1}(130,30)(150,30)\dashline[200]{1}(130,30)(130,50)\dashline[200]{1}(150,30)(150,50)
   \dashline[200]{1}(130,50)(150,50)\dashline[200]{1}(150,30)(160,40)\dashline[200]{1}(150,50)(160,60)
  \dashline[200]{1}(130,50)(140,60)\dashline[200]{1}(140,60)(160,60)
   \dashline[200]{1}(160,40)(160,60)
    \put(130,-10){\makebox(0,0){$S_{0}$}}
   \put(100,0){\circle*{4}}\put(160,60){\circle*{4}}
   \thicklines
   \put(110,30){\line(1,0){40}} \put(120,20){\line(1,1){20}} \put(130,10){\line(0,1){40}}

 \put(230,0){\line(1,0){20}}\put(230,0){\line(0,1){20}}
  \put(230,20){\line(1,0){40}}\put(250,0){\line(0,1){40}}
   \put(230,20){\line(1,1){10}} \put(240,30){\line(1,0){10}}
    \put(250,20){\line(1,1){10}}\put(250,0){\line(1,1){10}}\put(260,10){\line(0,1){10}}

     \put(240,30){\line(0,1){20}}
      \put(240,50){\line(1,0){40}}

  \put(250,40){\line(1,1){20}}
  \put(260,30){\line(1,0){40}}
  \put(240,30){\line(1,0){20}}
  \put(260,30){\line(0,1){40}}\put(270,20){\line(1,1){20}}\put(280,10){\line(0,1){40}}\put(280,10){\line(-1,0){20}}
\put(270,60){\line(1,0){40}}\put(280,50){\line(1,1){20}}\put(290,40){\line(0,1){40}}

   \put(230,20){\line(1,1){10}} \put(240,30){\line(1,0){10}}
    \put(250,20){\line(1,1){10}}\put(250,0){\line(1,1){10}}\put(260,10){\line(0,1){10}}
\thinlines

     \dashline[200]{1}(230,0)(240,10)\dashline[200]{1}(240,10)(240,30)\dashline[200]{1}(240,10)(260,10)
        \dashline[200]{1}(260,30)(280,50)\dashline[200]{1}(270,40)(270,60)\dashline[200]{1}(270,40)(290,40)
      \put(260,-10){\makebox(0,0){$S_{1}$}}

    \end{picture}

$S_{-1}$ is the lattice point $(1,1,1)$, hence via (\ref{eq:sum}) the contribution in $\bPE_1$ is ${\bf T}^{(1,1,1)}Q^{-1}$.

The space $S_0$ is $\{(0,0,0)\}\cup \{(2,2,2)\}\cup \{(1,1,[0,2])\}\cup \{(1,[0,2], 1)\}\cup \{([0,2],1,1)\}$. The contribution is
$$Q^0\big(\ 1+2T_1T_2T_3 h+T_1T_2T_3(T_1+T_2+T_3)+T_1^2T_2^2T_3^2\,\big).$$
1 respectively $T_1^2T_2^2T_3^2$ are produced by the isolated lattice points
$(0,0,0)$ and $(2,2,2)$ of $S_0$. If we glue  the three
endpoints with $d=4$  of the central cross of $S_0$ into a single point then we create two 1--loops, this gives
the term $2T_1T_2T_3 h$; while $T_1T_2T_3(T_1+T_2+T_3)$ is given by the three ends considered before.

Next we consider the spectral sequence associated with $S_0$ and the level filtration (i.e. in the above formula
in order to get $PE_1(T,Q,h)$ we substitute $T_1=T_2=T_3=T$. I.e.,
$$PE_1(T,Q,h)=1+T^3Q^{-1}(1+2Qh)+3T^4\frac{1+Qh}{1-TQ}+T^6\frac{(1+Qh)^2}{(1-TQ)^3}.$$
The left diagram $E^1_{*,*}$ is given by the above expression (by the term of $Q^0$), while the right
$E^\infty_{*,*}$ is given by the $Q^0$ term $1+T^4+T^6$ of $PE_\infty$ computed above (or verifying the filtration on $S_0$).
The differential has degree $(-1,0)$, so necessarily  $\Z^3\leftarrow \Z^2$ should be injective  and $E^2_{*,*}=E^\infty_{*,*}$.

 \begin{picture}(300,110)(-40,-20)

  \put(0,10){\vector(1,0){80}}\put(60,0){\vector(0,1){70}}
  \dashline[200]{1}(50,0)(50,70)\dashline[200]{1}(40,0)(40,70)\dashline[200]{1}(30,0)(30,70)
  \dashline[200]{1}(20,0)(20,70)\dashline[200]{1}(10,0)(10,70)\dashline[200]{1}(0,0)(0,70)
   \dashline[200]{1}(0,20)(70,20)\dashline[200]{1}(0,30)(70,30)\dashline[200]{1}(0,40)(70,40)
  \dashline[200]{1}(0,50)(70,50)\dashline[200]{1}(0,60)(70,60)\dashline[200]{1}(0,70)(70,70)
  \put(60,80){\makebox(0,0){\footnotesize{$q$}}} \put(80,15){\makebox(0,0){\footnotesize$p$}}
   \put(30,-10){\makebox(0,0){\footnotesize{$E^1_{*,*}(S_0)$}}}
    \put(60,10){\makebox(0,0){\footnotesize{$\Z$}}} \put(0,70){\makebox(0,0){\footnotesize{$\Z$}}}
    \put(22,51){\makebox(0,0){\footnotesize{$\Z^3$}}} \put(32,51){\makebox(0,0){\footnotesize{$\Z^2$}}}

    \put(200,10){\vector(1,0){80}}\put(260,0){\vector(0,1){70}}
  \dashline[200]{1}(250,0)(250,70)\dashline[200]{1}(240,0)(240,70)\dashline[200]{1}(230,0)(230,70)
  \dashline[200]{1}(220,0)(220,70)\dashline[200]{1}(210,0)(210,70)\dashline[200]{1}(200,0)(200,70)
   \dashline[200]{1}(200,20)(270,20)\dashline[200]{1}(200,30)(270,30)\dashline[200]{1}(200,40)(270,40)
  \dashline[200]{1}(200,50)(270,50)\dashline[200]{1}(200,60)(270,60)\dashline[200]{1}(200,70)(270,70)
  \put(260,80){\makebox(0,0){\footnotesize{$q$}}} \put(280,15){\makebox(0,0){\footnotesize$p$}}
   \put(230,-10){\makebox(0,0){\footnotesize{$E^\infty_{*,*}(S_0)$}}}
    \put(260,10){\makebox(0,0){\footnotesize{$\Z$}}} \put(200,70){\makebox(0,0){\footnotesize{$\Z$}}}
    \put(222,51){\makebox(0,0){\footnotesize{$\Z$}}} 

  \end{picture}

In the case of $S_1$, $Q^1$ in  $PE_1(T,Q,h)$ appears with $3(T^4h+T^5)+2T^6h+3T^7$ while in $PE_\infty(T,Q,h)$ with $T^7$.
Hence the pages of the spectral sequence are

\begin{picture}(300,120)(-40,-20)

  \put(-10,10){\vector(1,0){90}}\put(60,0){\vector(0,1){80}}
  \dashline[200]{1}(50,0)(50,80)\dashline[200]{1}(40,0)(40,80)\dashline[200]{1}(30,0)(30,80)
  \dashline[200]{1}(20,0)(20,80)\dashline[200]{1}(10,0)(10,80)\dashline[200]{1}(0,0)(0,80)\dashline[200]{1}(-10,0)(-10,80)
   \dashline[200]{1}(-10,20)(70,20)\dashline[200]{1}(-10,30)(70,30)\dashline[200]{1}(-10,40)(70,40)
  \dashline[200]{1}(-10,50)(70,50)\dashline[200]{1}(-10,60)(70,60)\dashline[200]{1}(-10,70)(70,70)\dashline[200]{1}(-10,80)(70,80)
  \put(60,90){\makebox(0,0){\footnotesize{$q$}}} \put(80,15){\makebox(0,0){\footnotesize$p$}}
   \put(30,-10){\makebox(0,0){\footnotesize{$E^1_{*,*}(S_1)$}}}
    \put(12,61){\makebox(0,0){\footnotesize{$\Z^3$}}} \put(22,61){\makebox(0,0){\footnotesize{$\Z^3$}}}
\put(-8,81){\makebox(0,0){\footnotesize{$\Z^3$}}} \put(2,81){\makebox(0,0){\footnotesize{$\Z^2$}}}

    \put(190,10){\vector(1,0){90}}\put(260,0){\vector(0,1){80}}
  \dashline[200]{1}(250,0)(250,80)\dashline[200]{1}(240,0)(240,80)\dashline[200]{1}(230,0)(230,80)
  \dashline[200]{1}(220,0)(220,80)\dashline[200]{1}(210,0)(210,80)\dashline[200]{1}(200,0)(200,80)\dashline[200]{1}(190,0)(190,80)
   \dashline[200]{1}(190,20)(270,20)\dashline[200]{1}(190,30)(270,30)\dashline[200]{1}(190,40)(270,40)
  \dashline[200]{1}(190,50)(270,50)\dashline[200]{1}(190,60)(270,60)\dashline[200]{1}(190,70)(270,70)\dashline[200]{1}(190,80)(270,80)
  \put(260,90){\makebox(0,0){\footnotesize{$q$}}} \put(280,15){\makebox(0,0){\footnotesize$p$}}
   \put(230,-10){\makebox(0,0){\footnotesize{$E^\infty_{*,*}(S_1)$}}}
\put(190,80){\makebox(0,0){\footnotesize{$\Z$}}}

  \end{picture}

It turns out that  $E^2_{*,*}=E^\infty_{*,*}$ for all $n$, in particular, $PE_\infty$ is obtained from $PE_1$ by substitution
$h\mapsto -T$, cf. \ref{ss:ss}.

\begin{remark}
The above examples from \ref{ss:22} and \ref{ss:33}  might suggest that in general, for any $n$,
the spectral sequence degenerates at most at $E^2$-level, that is, $E^2_{*,*}=E^\infty_{*,*}$.
However, this is not the case as the next family  shows. In particular, the degeneration
invariant $k_{(C,o)}$ can be strict larger than two.
\end{remark}


\subsection{Decomposable singularities.}\label{ss:decsing}

A curve singularity $(C,o)$ is called `decomposable' (into
$(C',o)$ and $(C'',o)$),
 if it is isomorphic to the one-point union in
$(\bC^n\times \bC^m,o)$ of  $(C',o)\times \{o\}\subset (\bC^n,o)\times \{o\}$ and
$\{o\}\times (C'',o)\subset \{o\}\times (\bC^m,o)$.
We denote this by $(C,o)=(C',o)\vee (C'',o)$.

If $(C',o)\subset (\bC^n,o)$
is given by the ideal $I'$,
then its ideal in $(\bc^n\times \bC^m,o)$ is
$I'+\frm_{(\bc^m,o)}$ (here $\frm$ denotes the maximal ideal).
Using this observation, one can deduce that if $(C,o)=(C',o)\cup(C'',o)$ then
$(C,o)$ is decomposable into  $(C',o)$ and $(C'',o)$
if and only if $(C',C'')_{Hir}=1$  (or, if and only if $\delta(C,o)=\delta(C',o)+\delta(C'',o)+1$, cf. (\ref{eq:Hir})),
see \cite{Steiner83,Stevens85}.

Similarly, a computation shows that
the semigroup of values also behave `additively'. Assume that the number of
irreducible components of  $(C',o)$ and $(C'',o)$ is $r'$ and $r''$, then
\begin{equation}\label{eq:dec1}
\calS(C,o)=\{(0,0)\}\cup\, \big(\ (\calS(C',o)\setminus \{0\}\times \calS(C'',o)\setminus \{0\})\ \big)
\subset
\Z_{\geq 0}^{r'}\times \Z_{\geq 0}^{r''}= \Z_{\geq 0}^{r'+r''}.
\end{equation}
Furthermore, by Lemma \ref{eq:semi} and from  $w(l)=2\hh(l)-|l|$   we also have
\begin{equation}\label{eq:dec2}
 \hh_{(C,o)}(l',l'')=\left\{
 \begin{array}{ll}
        \hh_{(C',o)}(l') & \mbox{if $l''=0$},\\
          \hh_{(C'',o)}(l'') & \mbox{if $l'=0$},\\
            \hh_{(C',o)}(l') +\hh_{(C'',o)}(l'')-1 & \mbox{if $l'>0$ and $l''>0$};
       \end{array}\right.
\end{equation}
\begin{equation}\label{eq:dec3}
 w_{(C,o)}(l',l'')=\left\{
 \begin{array}{ll}
        w_{(C',o)}(l') & \mbox{if $l''=0$},\\
          w_{(C'',o)}(l'') & \mbox{if $l'=0$},\\
            w_{(C',o)}(l') +w_{(C'',o)}(l'')-2 & \mbox{if $l'>0$ and $l''>0$};
       \end{array}\right.
\end{equation}
Furthermore, (\ref{eq:pmot}) gives
\begin{equation}\label{eq:dec4}
P^m_{(C,o)}(\bt',\bt'';q)-1=  q^{-1}(1-q)\big(\,P^m_{(C',o)}(\bt';q)-1\big)\big(P^m_{(C'',o)}(\bt'';q)-1\big).
\end{equation}
In particular, via Theorem \ref{th:PP},
\begin{equation}\label{eq:dec5}
\bPE_{(C,o),1}({\bf T}',{\bf T}'', Q,h)-1=  Q^{-2}(1+Qh)\big(
\bPE_{(C',o),1}({\bf T}', Q,h)-1\big)\big(
\bPE_{(C'',o),1}({\bf T}'', Q,h)-1\big).
\end{equation}
\begin{remark} (a)
The formulae for $\bPE_{(C,o),k}$ for $k>1$, in general is more complicated.
The complete discussion will be given in a forthcoming manuscript. They can be deduced from the structure
of the level spaces $\{S_n\}_n$, which are determined by the weight function according to
(\ref{eq:dec3}). E.g., if we denote by $\bar{S}_n$ the intersection of $S_n$ with $(\R_{\geq 1})^r$, then
at the level of these spaces we have
$$\bar{S}_{(C,o),n}=\cup_{n'+n''=n} \ \bar{S}_{(C',0),n'}\times  \bar{S}_{(C'',0),n''}.$$
By analysing the first lines/columns,
we get that for $n\not=0$ the spaces $S_{(C,o),n}$ and $\bar{S}_{(C,o),n}$
have the same (graded) homotopy type, while  for
$n=0$ the space  $S_{(C,o),0}$ is the disjoint union of $\{0\}$ and a space $S'_{(C,o),0}$
where $S'_{(C,o),0}$ and $\bar{S}_{(C,o),0}$
have the same (graded) homotopy type.

(b) Based on the discussion from {\it (a)} in order to compute $\bH_*(C_1\vee C_2)$ we have to focus on the
lattice homology on the spaces $(\R_{\geq 1})^r$. But by (\ref{eq:dec3}) the weights here behave additively. This creates a
situation similar to the computation of the lattice homology, or of the Heegaard Floer homology associated with the connected
sum of plumbed 3--manifolds, where one compares the homologies associated with graphs $\Gamma_1$, $\Gamma_2$ and the
disjoint union $\Gamma_1\sqcup\Gamma_2$. For such a connected formula in the $HF^-$ theory see
\cite{os22}, section 6. Here a very similar (K\"unneth) formula holds (with similar homological algebra proof).
This is formulated as follows.
\end{remark}
We separate from $\bH_*$ the contribution given by the lattice point $0$: we write $\bH_0$ as $\bar{\bH}_0\oplus  \calt_0(1)$, and
$\bar{\bH}_b=\bH_b$ for $b\geq 1$.
\begin{theorem}\label{th:kunneth}
For any $b\geq 0$ we have the following isomorphism of $\Z[U]$--modules:
$$\bar{\bH}_b(C_1\vee C_2)=\oplus _{i+j=b}\, \bar{\bH}_i(C_1)\otimes_{\Z[U]} \bar{\bH}_j(C_2)[4]\ \oplus \
\oplus _{i+j=b-1}\, {\rm Tor}_{\Z[U]}(\bar{\bH}_i(C_1), \bar{\bH}_j(C_2))[2].$$
\end{theorem}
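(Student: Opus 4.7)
The plan is to reduce the wedge formula to an algebraic K\"unneth formula over $\Z[U]$ by exploiting the product structure on the shifted region $(\R_{\geq 1})^{r_1+r_2}$, on which the weight $w_C$ is additive up to the constant $-2$. First I would invoke the structural observation stated in the Remark preceding the theorem: outside the isolated lattice point $0\in S_0$, each $S_n(C)$ is homotopy equivalent to $\bar S_n(C):=S_n(C)\cap(\R_{\geq 1})^{r_1+r_2}$. Consequently $\bar\bH_*(C)\cong\bigoplus_n H_*(\bar S_n(C),\Z)$ as graded $\Z[U]$--modules, and it is computed by the subcomplex $\bar\Lb_*(C)\subset\Lb_*(C)$ spanned by the cubes of $(\R_{\geq 1})^{r_1+r_2}$, via the same $\Z[U]$--module identification as in the proof of Theorem~\ref{9STR1}.

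Next I would exhibit $\bar\Lb_*(C)$ as a tensor product of chain complexes. Each cube in $(\R_{\geq 1})^{r_1+r_2}$ factors uniquely as $\square_1\times\square_2$ with $\square_i$ a cube in $(\R_{\geq 1})^{r_i}$, the cubical boundary respects this product, and the additivity relation $w_C(l',l'')=w_{C_1}(l')+w_{C_2}(l'')-2$ extends to $w_C(\square_1\times\square_2)=w_{C_1}(\square_1)+w_{C_2}(\square_2)-2$. Together with the convention $\deg(U^m\square)=-2m-2w(\square)$, this produces a canonical isomorphism of graded $\Z[U]$--module chain complexes
$$
\bar\Lb_*(C)\ \cong\ \bigl(\bar\Lb_*(C_1)\otimes_{\Z[U]}\bar\Lb_*(C_2)\bigr)[4],
$$
the shift $[4]$ recording the weight offset $-2$. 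Both factors are chain complexes of free $\Z[U]$--modules, so the algebraic K\"unneth spectral sequence
$$
E^2_{p,q}=\operatorname{Tor}^{\Z[U]}_p\bigl(\bar\bH_*(C_1),\bar\bH_*(C_2)\bigr)_q\ \Longrightarrow\ H_{p+q}\bigl(\bar\Lb_*(C_1)\otimes_{\Z[U]}\bar\Lb_*(C_2)\bigr)
$$
is available. The finite--rank modules $\bar\bH_i(C_j)$ decompose into summands of the form $\calt^-_k$ (projective) and $\calt_k(n)=\Z[U]/U^n$, the latter carrying a length--one free resolution $0\to\calt^-_{k-2n}\xrightarrow{U^n}\calt^-_k\to\calt_k(n)\to 0$. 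Each $\bar\bH_i(C_j)$ therefore has projective dimension at most one, $E^2_{p,*}=0$ for $p\geq 2$, and the spectral sequence collapses to a K\"unneth short exact sequence; this sequence splits non--canonically because its summands are cyclic. The overall $[4]$ shift of the tensor term and the shift $[2]$ of the Tor term (the latter being $[4]$ combined with the natural $[-2]$ that enters $\operatorname{Tor}_1(\calt_k(n),\cdot)$ through the resolution above) then follow by tracking the internal degrees inside this K\"unneth sequence.

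The principal obstacle is the delicate bookkeeping of these two grading shifts, and in particular the relative $[-2]$ offset between the Tor and tensor summands; this mirrors the $[1]$ Maslov shift on the Tor term in the Ozsv\'ath--Szab\'o connected sum formula for $HF^-$ and forces a careful tracking of internal degrees through the minimal free resolutions of the cyclic pieces $\calt_k(n)$. A secondary point is the splitting of the K\"unneth short exact sequence over the non--hereditary ring $\Z[U]$; since this cannot be quoted from a general theorem, it must be verified directly by invoking the explicit $\calt^-\oplus\bigoplus\calt(n)$ structure of each $\bar\bH_*(C_i)$.
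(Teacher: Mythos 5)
Your overall strategy is exactly the one the paper intends (and only sketches, with a one-sentence appeal to the connected-sum argument in Ozsv\'ath--Szab\'o \cite{os22}, \S 6): restrict to $(\R_{\geq 1})^{r_1+r_2}$ where the weight is additive up to the constant $-2$, exhibit the restricted complex $\bar\Lb_*(C)$ as a shift of the tensor product $\bar\Lb_*(C_1)\otimes_{\Z[U]}\bar\Lb_*(C_2)$, and run the algebraic K\"unneth spectral sequence, which collapses because each $\bar\bH_i(C_j)$ has projective dimension at most one over $\Z[U]$. All of that is correct and matches the paper's intention.

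There is, however, a genuine problem in your bookkeeping of the internal degree shift on the $\operatorname{Tor}$ term. You assert that the $[2]$ in the theorem arises as ``$[4]$ combined with the natural $[-2]$ that enters $\operatorname{Tor}_1(\calt_k(n),\cdot)$ through the resolution $0\to\calt^-_{k-2n}\xrightarrow{U^n}\calt^-_k\to\calt_k(n)\to 0$.'' But the syzygy in that resolution sits in internal degree $k-2n$, not $k-2$; the shift introduced by the resolution is $-2n$, which depends on the torsion exponent $n$ and is not uniformly $-2$. If one uses the standard internal grading on $\operatorname{Tor}_1^{\Z[U]}$ (i.e.\ the grading it inherits from the tensored minimal free resolution, which is what the K\"unneth spectral sequence actually produces), then $\operatorname{Tor}_1(\calt_a(m),\calt_b(n))=\calt_{a+b-2\max(m,n)}(\min(m,n))$, and the chain-level identity $\bar\Lb_*(C)\cong(\bar\Lb_*(C_1)\otimes_{\Z[U]}\bar\Lb_*(C_2))[4]$ forces the shift on the $\operatorname{Tor}$ summand to be $[4]$, the same as on the $\otimes$ summand. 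One can check this against the paper's own Example~\ref{ex:decsing2} ($C_1=C_2=\{x^3+y^4\}$, $\bar\bH_0(C_i)=\calt^-_2\oplus\calt_0(1)$): there $\operatorname{Tor}_1(\calt_0(1),\calt_0(1))=\calt_{-2}(1)$, and $\calt_{-2}(1)[4]=\calt_2(1)$ reproduces the stated $\bH_1(C)=\calt_2(1)$, whereas $\calt_{-2}(1)[2]=\calt_0(1)$ does not. The paper's displayed identity $\operatorname{Tor}_{\Z[U]}(\calt_n(k'),\calt_m(k))=\calt_{n+m}(\min\{k',k\})$ is itself off by the shift $-2\max(k',k)$ relative to the standard convention, and together with the stated $[2]$ it happens to reproduce the correct answer only when all torsion exponents equal~$1$. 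So you should either adopt the standard graded $\operatorname{Tor}$ and write $[4]$ on both summands, or spell out precisely which nonstandard shifted $\operatorname{Tor}$ convention is being used; the ``$[-2]$ from the resolution'' as you have written it is simply not a valid uniform shift, and inserting it would give the wrong internal grading whenever $\min(m,n)>1$.

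A secondary but real gap is the splitting of the K\"unneth short exact sequence over $\Z[U]$. You correctly flag that this cannot be quoted from a general theorem, since $\Z[U]$ is not hereditary, and you propose to verify it from the explicit $\calt^-\oplus\bigoplus\calt(n)$ structure. That is the right instinct, but ``because its summands are cyclic'' is not an argument: the extension groups $\operatorname{Ext}^1_{\Z[U]}(\calt_{c'}(k),\calt_c(k'))\cong\Z[U]/U^{\min(k,k')}$ are nonzero, so cyclic-by-cyclic extensions do \emph{not} automatically split. What actually makes the splitting work in this setting is the same chain-level mechanism as in the $HF^-$ connected-sum proof: one first replaces each $\bar\Lb_*(C_i)$ by a chain-homotopy-equivalent direct sum of elementary ``staircase'' subcomplexes realizing the $\calt^-\oplus\bigoplus\calt(n)$ decomposition (this uses the freeness of the complex and the fact that the homology has no $\Z$-torsion), after which the tensor product visibly decomposes into the indicated $\otimes$ and $\operatorname{Tor}_1$ pieces, with no extension problem left. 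That reduction is the missing step.
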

Note also the following identities: $\calt^-_n\otimes _{\Z[U]} \calt^-_m=\calt^-_{n+m}$,
$\calt^-_n\otimes _{\Z[U]} \calt_m(k)=\calt_{n+m}(k)$, $\calt_n(k')\otimes _{\Z[U]} \calt_m(k)=\calt_{n+m}(\min\{k',k\})$,
${\rm Tor}_{\Z[U]}(\calt^-_n, M)=0$ and ${\rm Tor} _{\Z[U]}(\calt_n(k'),\calt_m(k))=\calt_{n+m}(\min\{k',k\})$.

\vspace{2mm}

 Next we focus on  some concrete key examples.

\begin{example}\label{ex:gen}
Assume that $(C^{(j)},o)$ ($1\leq j\leq r$) is an irreducible curve singularity
with $\calS_{(C^{(j)},o)}=\{0\}\cup \bZ_{\geq c^{(j)}}$, for some $c^{(j)}\geq 1$. This situation can be realized.
In fact, any numerical semigroup $\calS\subset \Z_{\geq 0}$
with $\Z_{\geq 0}\setminus \calS$ finite is the semigroup of some
singular germ. Indeed,
let $\bar{\beta}_0, \ldots, \bar{\beta}_s $ be a minimal set of generators of $\calS$ (for its existence see e.g. \cite{Assi}).
Let $C^\calS$ be the affine curve defined via the parametrization
$t\mapsto (t^{\bar{\beta}_0}, \ldots, t^{\bar{\beta}_s})$. Then the affine coordinate ring
$\C[C^\calS]$ of $C^\calS$ is the image in $\C[t]$ of the morphism
$\varphi:\C[u_0,\ldots, u_s]\to \C[t]$, $\varphi(u_i)=t^{\bar{\beta}_i}$, which correspond to the normalization of
$C^{\calS}$.
The analytic germ $(C^\calS,0)$  is irreducible, and its semigroup is $\calS$.

Having the germs  $(C^{(j)},o)$ ($1\leq j\leq r$), let us consider $(C,o):=\vee_{j=1}^r
 (C^{(j)},o)$. It has $r$ irreducible components, they are isomorphic to $\{(C^{(j)},o)\}_j$.
 Then $\calS_{(C,o)}=\{0\}\cup \prod_j \Z_{\geq c^{(j)}}$, hence the conductor  of
 $\calS_{(C,o)}$ is $c=(c^{(1)}, \dots, c^{(r)})$.  By the above general formula (\ref{eq:dec4})  we have
 $$P^m(\bt;q)=1+\sum_{l\geq c}\bt^l q^{|l|-|c|+1}(1-q)^{r-1}=1+\bt^cq\cdot \frac{(1-q)^{r-1}}
 {\prod_i (1-t_iq)}.$$
 Since for any $l\geq c$ we have $\hh(l)=|l|-|c|+1$, we also have
 $$\bPE_1({\bf T}, Q,h)=1+\sum_{l\geq c}{\bf T}^l Q^{|l|-2|c|+2}(1+Qh)^{r-1}
 =1+{\bf T}^cQ^{2-|c|}\cdot \frac{(1+Qh)^{r-1}} {\prod_i (1-T_iQ)},$$
 $$PE_1(T, Q,h)
 =1+T^{|c|}Q^{2-|c|}\cdot \frac{(1+Qh)^{r-1}} {(1-TQ)^r}.$$
 In this case, from the explicit structure of the $w$-table in $R(0,c)$ we also get that each
 $S_n$ for $n\not=0$ is contractible, and $S_0$ consists of two components, each contractible.
 Therefore, $\bH_{>0}(C,o)$ is trivial, and  $\bH_0(C,o)= \calT^-_{-2(2-|c|)}\oplus \calt_0(1)$.
 (This identity can be deduced via Theorem \ref{th:kunneth} as well.)
 The homological graded root is

 \begin{picture}(300,92)(80,330)

\put(180,380){\makebox(0,0){\footnotesize{$0$}}} \put(177,390){\makebox(0,0){\footnotesize{$+1$}}}
\put(177,410){\makebox(0,0){\footnotesize{$-(2-|c|)$}}}\put(177,370){\makebox(0,0){\footnotesize{$-1$}}}
\put(177,340){\makebox(0,0){\small{$-w_0$}}}\put(177,360){\makebox(0,0){\footnotesize{$-2$}}}
\dashline{1}(200,350)(240,350)
\dashline{1}(200,380)(240,380) \dashline{1}(200,360)(240,360)
\dashline{1}(200,390)(240,390) \dashline{1}(200,370)(240,370)
\put(220,345){\makebox(0,0){$\vdots$}} \put(220,360){\circle*{3}}
\put(220,370){\circle*{3}} \put(210,380){\circle*{3}}
\put(220,380){\circle*{3}}
\put(220,390){\circle*{3}}
\put(220,410){\circle*{3}} \put(220,410){\line(0,-1){5}}
 \put(220,350){\line(0,1){45}}
\put(210,380){\line(1,-1){10}} 
\put(223,403){\makebox(0,0){$\vdots$}}
\end{picture}

\noindent  Thus, one reads directly that
$$PE_\infty(T,Q,h)
  =1+T^{|c|}Q^{2-|c|}\cdot \frac{1} {1-TQ}.$$
In particular, $PE_\infty$ is obtained from $PE_1$ by cancellation of terms of type
$T^aQ^bH^c(T+h)$ (cf. \ref{ss:ss}) (that is, by substitution $h\mapsto -T$),
 hence the spectral sequence degenerates at $E^2$ level.

 Note that in this case $\delta=|c|-1$. This shows that the inequality $\delta\leq |c|-1$
 proved in \ref{bek:AnnFiltr} is sharp, and  this extremal case corresponds exactly to
$\hh(c)=1$, or $\frc=\frm_{(C,o)}$.

The curve is extremal from the point of view of the lattice homology
$\bH_*(C,o)$ as well. In \cite[Example 4.6.1]{AgostonNemethi} is proved that $\bH_{*,red}\not=0$ if and only if $(C,o)$ is non-smooth.
Note that in our case ${\rm rank}_{\Z}\bH_{*,red}(C,o)=1$, the smallest possible among the non-smooth germs.

If $c^{(i)}=1$ for all $i$ then $(C,o)$ is an ordinary $r$-tuple, hence the above discussion
clarifies their invariants as well.

\end{example}

\begin{example}\label{ex:decsing2}
 Assume that $(C',o)=(C'',o)=\{x^3+y^4=0\}$.
 Then $\calS_{(C',o)}=\calS_{(C'',o)}=\langle3,4\rangle$.
 Here we provide some concrete computation for $(C,o)=(C',o)\vee (C'',o)$.

The $w$-table in $R(0,(8,8))$ and the homological graded root is the following:

\begin{picture}(320,120)(-100,-20)

\put(177,50){\makebox(0,0){\footnotesize{$0$}}} \put(177,60){\makebox(0,0){\footnotesize{$1$}}}
\put(177,70){\makebox(0,0){\footnotesize{$2$}}}\put(177,80){\makebox(0,0){\footnotesize{$3$}}}
\put(177,90){\makebox(0,0){\footnotesize{$4$}}}\put(177,10){\makebox(0,0){\footnotesize{$-w$}}}
\dashline{1}(200,60)(240,60)
\dashline{1}(200,80)(240,80)
\dashline{1}(200,50)(240,50) \dashline{1}(200,90)(240,90)
\dashline{1}(200,70)(240,70)
\put(220,10){\makebox(0,0){$\vdots$}} \put(220,30){\circle*{3}}
\put(220,40){\circle*{3}} \put(210,50){\circle*{3}}
\put(220,50){\circle*{3}}
\put(220,60){\circle*{3}} \put(220,20){\line(0,1){70}}
\put(210,50){\line(1,-1){10}}

\put(220,80){\circle*{3}}
\put(220,90){\circle*{3}}
\put(210,80){\circle*{3}}
\put(220,70){\circle*{3}}
\put(230,70){\circle*{3}}
\put(230,80){\circle*{3}}
\put(220,70){\line(1,1){10}}
\put(220,70){\line(-1,1){10}}
\put(220,60){\line(1,1){10}}



\put(-15,0){\line(1,0){145}}
\put(-5,-10){\line(0,1){100}}

\put(5,-5){\makebox(0,0){\small{$0$}}}
\put(20,-5){\makebox(0,0){\small{$1$}}}
\put(35,-5){\makebox(0,0){\small{$2$}}}
\put(50,-5){\makebox(0,0){\small{$3$}}}
\put(65,-5){\makebox(0,0){\small{$4$}}}
\put(80,-5){\makebox(0,0){\small{$5$}}}
\put(95,-5){\makebox(0,0){\small{$6$}}}
\put(110,-5){\makebox(0,0){\small{$7$}}}
\put(125,-5){\makebox(0,0){\small{$8$}}}

\put(-10,5){\makebox(0,0){\small{$0$}}}
\put(-10,15){\makebox(0,0){\small{$1$}}}
\put(-10,25){\makebox(0,0){\small{$2$}}}
\put(-10,35){\makebox(0,0){\small{$3$}}}
\put(-10,45){\makebox(0,0){\small{$4$}}}
\put(-10,55){\makebox(0,0){\small{$5$}}}
\put(-10,65){\makebox(0,0){\small{$6$}}}
\put(-10,75){\makebox(0,0){\small{$7$}}}
\put(-10,85){\makebox(0,0){\small{$8$}}}

\put(5,5){\makebox(0,0){\small{$0$}}}
\put(5,15){\makebox(0,0){\small{$1$}}}
\put(5,25){\makebox(0,0){\small{$0$}}}
\put(5,35){\makebox(0,0){\small{$-1$}}}
\put(5,45){\makebox(0,0){\small{$0$}}}
\put(5,55){\makebox(0,0){\small{$1$}}}
\put(5,65){\makebox(0,0){\small{$0$}}}
\put(5,75){\makebox(0,0){\small{$1$}}}
\put(5,85){\makebox(0,0){\small{$2$}}}

\put(20,5){\makebox(0,0){\small{$1$}}}
\put(20,15){\makebox(0,0){\small{$0$}}}
\put(20,25){\makebox(0,0){\small{$-1$}}}
\put(20,35){\makebox(0,0){\small{$-2$}}}
\put(20,45){\makebox(0,0){\small{$-1$}}}
\put(20,55){\makebox(0,0){\small{$0$}}}
\put(20,65){\makebox(0,0){\small{$-1$}}}
\put(20,75){\makebox(0,0){\small{$0$}}}
\put(20,85){\makebox(0,0){\small{$1$}}}

\put(35,5){\makebox(0,0){\small{$0$}}}
\put(35,15){\makebox(0,0){\small{$-1$}}}
\put(35,25){\makebox(0,0){\small{$-2$}}}
\put(35,35){\makebox(0,0){\small{$-3$}}}
\put(35,45){\makebox(0,0){\small{$-2$}}}
\put(35,55){\makebox(0,0){\small{$-1$}}}
\put(35,65){\makebox(0,0){\small{$-2$}}}
\put(35,75){\makebox(0,0){\small{$-1$}}}
\put(35,85){\makebox(0,0){\small{$0$}}}

\put(50,5){\makebox(0,0){\small{$-1$}}}
\put(50,15){\makebox(0,0){\small{$-2$}}}
\put(50,25){\makebox(0,0){\small{$-3$}}}
\put(50,35){\makebox(0,0){\small{$-4$}}}
\put(50,45){\makebox(0,0){\small{$-3$}}}
\put(50,55){\makebox(0,0){\small{$-2$}}}
\put(50,65){\makebox(0,0){\small{$-3$}}}
\put(50,75){\makebox(0,0){\small{$-2$}}}
\put(50,85){\makebox(0,0){\small{$-1$}}}

\put(65,5){\makebox(0,0){\small{$0$}}}
\put(65,15){\makebox(0,0){\small{$-1$}}}
\put(65,25){\makebox(0,0){\small{$-2$}}}
\put(65,35){\makebox(0,0){\small{$-3$}}}
\put(65,45){\makebox(0,0){\small{$-2$}}}
\put(65,55){\makebox(0,0){\small{$-1$}}}
\put(65,65){\makebox(0,0){\small{$-2$}}}
\put(65,75){\makebox(0,0){\small{$-1$}}}
\put(65,85){\makebox(0,0){\small{$0$}}}

\put(80,5){\makebox(0,0){\small{$1$}}}
\put(80,15){\makebox(0,0){\small{$0$}}}
\put(80,25){\makebox(0,0){\small{$-1$}}}
\put(80,35){\makebox(0,0){\small{$-2$}}}
\put(80,45){\makebox(0,0){\small{$-1$}}}
\put(80,55){\makebox(0,0){\small{$0$}}}
\put(80,65){\makebox(0,0){\small{$-1$}}}
\put(80,75){\makebox(0,0){\small{$0$}}}
\put(80,85){\makebox(0,0){\small{$1$}}}

\put(95,5){\makebox(0,0){\small{$0$}}}
\put(95,15){\makebox(0,0){\small{$-1$}}}
\put(95,25){\makebox(0,0){\small{$-2$}}}
\put(95,35){\makebox(0,0){\small{$-3$}}}
\put(95,45){\makebox(0,0){\small{$-2$}}}
\put(95,55){\makebox(0,0){\small{$-1$}}}
\put(95,65){\makebox(0,0){\small{$-2$}}}
\put(95,75){\makebox(0,0){\small{$-1$}}}
\put(95,85){\makebox(0,0){\small{$0$}}}

\put(110,5){\makebox(0,0){\small{$1$}}}
\put(110,15){\makebox(0,0){\small{$0$}}}
\put(110,25){\makebox(0,0){\small{$-1$}}}
\put(110,35){\makebox(0,0){\small{$-2$}}}
\put(110,45){\makebox(0,0){\small{$-1$}}}
\put(110,55){\makebox(0,0){\small{$0$}}}
\put(110,65){\makebox(0,0){\small{$-1$}}}
\put(110,75){\makebox(0,0){\small{$0$}}}
\put(110,85){\makebox(0,0){\small{$1$}}}

\put(125,5){\makebox(0,0){\small{$2$}}}
\put(125,15){\makebox(0,0){\small{$1$}}}
\put(125,25){\makebox(0,0){\small{$0$}}}
\put(125,35){\makebox(0,0){\small{$-1$}}}
\put(125,45){\makebox(0,0){\small{$0$}}}
\put(125,55){\makebox(0,0){\small{$1$}}}
\put(125,65){\makebox(0,0){\small{$0$}}}
\put(125,75){\makebox(0,0){\small{$1$}}}
\put(125,85){\makebox(0,0){\small{$2$}}}

\end{picture}

Then we deduce  that $\bH_{>2}=0$, $\bH_1=\calt_{2}(1)$ is generated by the loop around the lattice point
$(5,5)$ in $S_{-1}$,  and $\bH_0=\calt_{8}^-\oplus \calt_{6}(1)^{\oplus 2}\oplus \calt_{4}(1)\oplus\calt_0(1)$.
(These can be read from Theorem \ref{th:kunneth} as well.)
 The delta invariant is $\delta=7$ (compatibly with $7=3+3+1$).

We wish to emphasize that in general, by Proposition \ref{cor:EUcurves},
the {\it homotopy type} of $S_n$ is given by $S_n\cap R(0,c)$. However, if we wish to determine the graded $S_n$,
or $\bPE_1$, then we need a rectangle which contains $S_n$.  (This motivates that we provided above
$R(0,(8,8))$, though  $c=(6,6)$, since $S_n\subset R(0,(8,8))$ for  $-4\leq n\leq -1$, and we wish to picture these spaces.)
From the rectangle $R(0,c)$ one also sees that $w$ is not symmetric with respect to $l\leftrightarrow c-l$ (cf. \ref{bek:GORdualoty})),
hence $(C,o)$ is not Gorenstein, though both components are plane curve singularities.

The spaces $S_n$  for $n=-4 -3,-2,-1 $  are the following:

\begin{picture}(350,120)(0,-20)

\thicklines
 \put(30,-10){\makebox(0,0){$S_{-4}$}}\put(42,35){\makebox(0,0){\footnotesize$(3,3)$}}
   \put(30,30){\circle*{4}}

 \put(130,-10){\makebox(0,0){$S_{-3}$}}\put(142,65){\makebox(0,0){\footnotesize$(3,6)$}}
 \put(172,35){\makebox(0,0){\footnotesize$(6,3)$}}
   \put(130,60){\circle*{4}} \put(160,30){\circle*{4}}
\put(120,30){\line(1,0){20}}\put(130,20){\line(0,1){20}}

\put(230,-10){\makebox(0,0){$S_{-2}$}}
 \put(272,65){\makebox(0,0){\footnotesize$(6,6)$}}
   \put(260,60){\circle*{4}} 
\put(210,30){\line(1,0){10}}
\put(240,30){\line(1,0){30}}
\put(220,40){\line(1,0){20}}\put(220,20){\line(1,0){20}}\put(220,60){\line(1,0){20}}

\put(230,10){\line(0,1){10}}
\put(230,40){\line(0,1){30}}
\put(220,20){\line(0,1){20}}\put(240,20){\line(0,1){20}}\put(260,20){\line(0,1){20}}

\put(220,22){\line(1,0){20}}\put(220,24){\line(1,0){20}}\put(220,26){\line(1,0){20}}
\put(220,28){\line(1,0){20}}\put(220,32){\line(1,0){20}}\put(220,34){\line(1,0){20}}
\put(220,36){\line(1,0){20}}\put(220,38){\line(1,0){20}}
\put(220,30){\line(1,0){20}}

\put(330,-10){\makebox(0,0){$S_{-1}$}}
\put(300,30){\line(1,0){80}}
\put(310,20){\line(1,0){60}}\put(310,40){\line(1,0){60}}\put(310,60){\line(1,0){60}}

\put(310,22){\line(1,0){60}}\put(310,24){\line(1,0){60}}\put(310,26){\line(1,0){60}}
\put(310,28){\line(1,0){60}}
\put(310,32){\line(1,0){60}}\put(310,34){\line(1,0){60}}\put(310,36){\line(1,0){60}}
\put(310,38){\line(1,0){60}}

\put(320,18){\line(1,0){20}}\put(320,16){\line(1,0){20}}\put(320,14){\line(1,0){20}}\put(320,12){\line(1,0){20}}

\put(320,42){\line(1,0){20}}\put(320,46){\line(1,0){20}}\put(320,44){\line(1,0){20}}\put(320,48){\line(1,0){20}}\put(320,50){\line(1,0){20}}
\put(320,52){\line(1,0){20}}\put(320,56){\line(1,0){20}}\put(320,54){\line(1,0){20}}\put(320,58){\line(1,0){20}}
\put(320,10){\line(1,0){20}}
\put(320,62){\line(1,0){20}}\put(320,66){\line(1,0){20}}\put(320,64){\line(1,0){20}}\put(320,68){\line(1,0){20}}\put(320,70){\line(1,0){20}}

\put(310,20){\line(0,1){20}}
\put(320,10){\line(0,1){10}}
\put(320,40){\line(0,1){30}}
\put(340,10){\line(0,1){10}}
\put(340,40){\line(0,1){30}}

\put(370,20){\line(0,1){20}}

\put(330,0){\line(0,1){10}}
\put(330,70){\line(0,1){10}}\put(360,10){\line(0,1){10}}\put(360,40){\line(0,1){30}}
\thinlines
\end{picture}

The space $S_{-2}$ has two connected component: one of them is the isolated lattice point $(6,6)$,
let $S_{-2}'$ be the other one.
The non-zero terms in the
 first page of the spectral sequence $E^1_{*,*}$ {\it associated with} $S_{-2}'$ can be read from its  cubical complex:
 $E^1_{-10,10}=\Z^4$, $E^1_{-9,10}=\Z^2$, $E^1_{-8,8}=\Z$, $E^1_{-7,8}=\Z^2$.

 This must converge to $E^\infty_{*,*}$ where the only non-zero term is
  $E^\infty_{-10,10}=\Z$. 
This can happen only if the differential $d^3 $ is non-trivial. I.e.,
$E^1\not=E^2=E^3\not= E^4=E^\infty$, hence $k_{(C,o)}\geq 4$.

\begin{picture}(300,80)(-40,10)

 \dashline[200]{1}(30,50)(30,80)
  \dashline[200]{1}(20,50)(20,80)\dashline[200]{1}(10,50)(10,80)
  \dashline[200]{1}(0,50)(0,80)
  \dashline[200]{1}(-10,50)(-10,80)

  \dashline[200]{1}(-10,50)(30,50)\dashline[200]{1}(-10,60)(30,60)
  \dashline[200]{1}(-10,70)(30,70)\dashline[200]{1}(-10,80)(30,80)

   \put(10,30){\makebox(0,0){\footnotesize{$E^1_{*,*}(S_{-2}')$}}}
    \put(10,60){\makebox(0,0){\footnotesize{$\Z$}}} \put(21,61){\makebox(0,0){\footnotesize{$\Z^2$}}}
\put(-8,81){\makebox(0,0){\footnotesize{$\Z^4$}}} \put(2,81){\makebox(0,0){\footnotesize{$\Z^2$}}}


  \dashline[200]{1}(130,50)(130,80)
  \dashline[200]{1}(120,50)(120,80)\dashline[200]{1}(110,50)(110,80)
  \dashline[200]{1}(100,50)(100,80)
  \dashline[200]{1}(90,50)(90,80)

  \dashline[200]{1}(90,50)(130,50)\dashline[200]{1}(90,60)(130,60)
  \dashline[200]{1}(90,70)(130,70)\dashline[200]{1}(90,80)(130,80)

   \put(110,30){\makebox(0,0){\footnotesize{$E^2_{*,*}(S_{-2}')$}}}
    \put(110,15){\makebox(0,0){\footnotesize{$E^3_{*,*}(S_{-2}')$}}}
  \put(121,60){\makebox(0,0){\footnotesize{$\Z$}}}
\put(92,81){\makebox(0,0){\footnotesize{$\Z^2$}}}
\put(300,60){\makebox(0,0){\mbox{(the arrow is $d^3\not=0$)}}}

  \put(117,64){\vector(-2,1){22}}


  \dashline[200]{1}(230,50)(230,80)
  \dashline[200]{1}(220,50)(220,80)\dashline[200]{1}(210,50)(210,80)
  \dashline[200]{1}(200,50)(200,80)
  \dashline[200]{1}(190,50)(190,80)

  \dashline[200]{1}(190,50)(230,50)\dashline[200]{1}(190,60)(230,60)
  \dashline[200]{1}(190,70)(230,70)\dashline[200]{1}(190,80)(230,80)

   \put(210,30){\makebox(0,0){\footnotesize{$E^4_{*,*}(S_{-2}')$}}}
     \put(210,15){\makebox(0,0){\footnotesize{$E^\infty_{*,*}(S_{-2}')$}}}
\put(190,80){\makebox(0,0){\footnotesize{$\Z$}}} 
  \end{picture}

(Above in each diagram the upper-left corner is the lattice point $(-10,10)$.)

Looking at  the pages of this spectral sequence (and the examples considered above) we might believe that
each differential $d^k_{*,*}$ has `maximal rank' (that is, its rank is the maximal of the ranks of its
source and target). However, this is not the case in general.
E.g., $S_{-1}$ has the homotopy type of a circle, hence $E^\infty_{p,q}=\Z$ for a pair $(p,q)$  with $p+q=0$ and for a pair
with $p+q=1$.  It turns out that the spectral sequence has the following pages (the upper-left corner is $(-13,13)$):

\begin{picture}(300,80)(-40,10)

 \dashline[200]{1}(30,40)(30,80)
  \dashline[200]{1}(20,40)(20,80)\dashline[200]{1}(10,40)(10,80)
  \dashline[200]{1}(0,40)(0,80)
  \dashline[200]{1}(-10,40)(-10,80) \dashline[200]{1}(40,40)(40,80)

  \dashline[200]{1}(-10,50)(40,50)\dashline[200]{1}(-10,60)(40,60)
  \dashline[200]{1}(-10,70)(40,70)\dashline[200]{1}(-10,80)(40,80)\dashline[200]{1}(-10,40)(40,40)

   \put(10,30){\makebox(0,0){\footnotesize{$E^1_{*,*}(S_{-1})$}}}
    \put(12,61){\makebox(0,0){\footnotesize{$\Z^4$}}} \put(22,61){\makebox(0,0){\footnotesize{$\Z^4$}}}
\put(-8,81){\makebox(0,0){\footnotesize{$\Z^2$}}} \put(0,80){\makebox(0,0){\footnotesize{$\Z$}}}
 \put(40,40){\makebox(0,0){\footnotesize{$\Z$}}}

  \dashline[200]{1}(130,40)(130,80)
  \dashline[200]{1}(120,40)(120,80)\dashline[200]{1}(110,40)(110,80)
  \dashline[200]{1}(100,40)(100,80)
  \dashline[200]{1}(90,40)(90,80) \dashline[200]{1}(140,40)(140,80)

  \dashline[200]{1}(90,50)(140,50)\dashline[200]{1}(90,60)(140,60)
  \dashline[200]{1}(90,70)(140,70)\dashline[200]{1}(90,80)(140,80)\dashline[200]{1}(90,40)(140,40)

   \put(110,30){\makebox(0,0){\footnotesize{$E^2_{*,*}(S_{-1})$}}}
   \put(110,15){\makebox(0,0){\footnotesize{$E^\infty_{*,*}(S_{-1})$}}}
\put(90,80){\makebox(0,0){\footnotesize{$\Z$}}}\put(140,40){\makebox(0,0){\footnotesize{$\Z$}}}

  \dashline[200]{1}(230,40)(230,80)
  \dashline[200]{1}(220,40)(220,80)\dashline[200]{1}(210,40)(210,80)
  \dashline[200]{1}(200,40)(200,80)
  \dashline[200]{1}(190,40)(190,80) \dashline[200]{1}(240,40)(240,80)

  \dashline[200]{1}(190,50)(240,50)\dashline[200]{1}(190,60)(240,60)
  \dashline[200]{1}(190,70)(240,70)\dashline[200]{1}(190,80)(240,80)\dashline[200]{1}(190,40)(240,40)

  \put(235,45){\vector(-4,3){40}}
\put(190,80){\makebox(0,0){\footnotesize{$\Z$}}}\put(240,40){\makebox(0,0){\footnotesize{$\Z$}}}
   \put(210,30){\makebox(0,0){\footnotesize{$E^5_{*,*}(S_{-1})$}}}
    \put(210,15){\makebox(0,0){\footnotesize{$d^5=0$}}}
  \end{picture}

I.e., $E^\infty_{-13,13}=E^{\infty}_{-8,9}=\Z$ and $d^5_{-8,9}:E^5_{-8,9}=\Z\to E^5_{-13,13}=\Z$ is the zero morphism.
Hence, the spectral sequence --- starting from $E^1_{*,*}$ --- does not  follow automatically the `maximal rank'
principle.
\end{example}

\begin{example}\label{ex::decsing2}
Consider now $r$ copies of $(C',o)=\{x^3+y^4=0\}$, and define
$(C,o):=\vee_{i=1}^r (C',o)$. Then $l=5$ for $w(C',o)$ is a local maximum point.
Therefore, the lattice point $M:=(5, 5, \ldots, 5)$ will be a local maximum point for the
$w$-values of $(C,o)$ with $w(M)=2-r$.
Hence, $\bH_{r-1}(C,o)\not=0$. This shows that for well-chosen germs  $\bH_b(C,o)$ can be non-zero for arbitrarily large $b$.
\end{example}
\begin{question}
(a) Is there any plane curve singularity with $k_{(C,o)}>2$?

(b) What is the algebraic interpretation of  $k_{(C,o)}(n)$ and $k_{(C,o)}$ ?
\end{question}

   \section{Actions along the spectral sequences. The operators $Y_1, \ldots , Y_r$}\label{s:Uoperators}

    \subsection{} Recall that we have the natural inclusion $S_n\subset S_{n+1}$ for any $n$.
    This induces a $U$--action on $\oplus_n H_*(S_n,\Z)$.
    Besides this,
    in this section we will define certain additional  maps too, and we will analyse the
    actions what they induce on the terms of the spectral sequence $E^*_{*,*}$.
    In this way we endow the terms of the spectral sequence $E^*_{*,*}$ with certain additional structure.

   We will denote by $u=u_n$ the inclusion $S_n\subset S_{n+1}$, and  we will use
   similar notation for inclusions of type $S_n\cap \frX_{-l}\subset S_{n+1}\cap \frX_{-l}$ as well.
    Analogously,  for any $i$, the  inclusions of type $\frX_{-l-E_i}\hookrightarrow \frX_{-l}$
     will be denoted by $x_i$. Furthermore we define another map
   $y_i:\frX\to \frX$ by $y_i(x)=x+E_i$. This map clearly sends $\frX_{-d}$ to  $\frX_{-d-1}$,
   $\frX_{-l}$ to $\frX_{-l-E_i}$ and each cube $\square=(l,I)$ to $(l+E_i, I)$.

  Moreover, since $w(l+E_i)\leq w(l)+1$,  we have
   \begin{lemma}\label{lem:ui}
   (i) \ $w(y_i(l))\leq w(l)+1$, \ (ii) \ $w(y_i(\square))\leq w(\square)+1$, \ (iii) \
   $y_i(S_n)\subset S_{n+1}$.
   \end{lemma}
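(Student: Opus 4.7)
The plan is to derive all three statements from the single arithmetic fact that $\hh(l+E_i)-\hh(l)\in\{0,1\}$, recorded in Lemma \ref{eq:semi}. Once (i) is established for lattice points, parts (ii) and (iii) follow by the standard extension: cube weights are defined as the maximum over vertices, and $S_n$ is the union of cubes of weight $\leq n$.

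For (i), I would simply unpack the definition of $w$. Writing $w(l+E_i)=2\hh(l+E_i)-|l+E_i|=2\hh(l+E_i)-|l|-1$, and using $\hh(l+E_i)\leq \hh(l)+1$, one gets $w(l+E_i)\leq 2\hh(l)+2-|l|-1=w(l)+1$. That is the whole content of (i); no deeper property of $\hh$ is needed.

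For (ii), note that $y_i$ sends the cube $\square=(l,I)$ to the cube $(l+E_i,I)$, whose vertices are precisely the $E_i$-translates of the vertices of $\square$. Since $w$ on cubes is defined as the maximum of $w_0$ over vertices, I would pick a vertex $l+E_{I'}$ of $y_i(\square)$ realizing this maximum; by (i) applied at $l+E_{I'\setminus\{i\}}$ (or at $l+E_{I'}$ itself, depending on whether $i\in I'$), its weight is at most $w_0(l+E_{I'\setminus\{i\}})+1$ or $w_0(l+E_{I'})+1$, both of which are bounded by $w(\square)+1$.

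Finally, (iii) is immediate from (ii): if $\square\subset S_n$, so $w(\square)\leq n$, then $w(y_i(\square))\leq n+1$, hence $y_i(\square)\subset S_{n+1}$; taking the union over all cubes of $S_n$ yields $y_i(S_n)\subset S_{n+1}$. I do not anticipate any real obstacle here — the only slight subtlety is keeping track of which vertex of $y_i(\square)$ maximizes the weight, but the monotonicity of $\hh$ together with (i) handles every case uniformly.
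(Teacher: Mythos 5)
Your proof is correct and follows the same route the paper intends: it verifies $w(l+E_i)\le w(l)+1$ directly from $\hh(l+E_i)-\hh(l)\in\{0,1\}$ (Lemma \ref{eq:semi}), then passes to cubes via the max-over-vertices definition of $w$ and to $S_n$ by taking unions. One small simplification for (ii): the vertices of $y_i(\square)=(l+E_i,I)$ are exactly $\{(l+E_{I'})+E_i\}_{I'\subset I}$, i.e.\ the $E_i$--translates of the vertices of $\square$, so part (i) applies vertex-by-vertex with no case distinction on whether $i\in I'$.
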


   The map $y_i:S_n\to S_{n+1}$ is not very interesting at  $H_b(S_n,\Z)\to H_b(S_{n+1},\Z)$ level.
   Indeed, the  homotopy $S_n\times [0,1]\to S_{n+1}$ by $(x,t)\mapsto x+tE_i  $ gives the following.

   \begin{lemma}\label{lem:ui2}
   For each $i$, the map  $y_i:S_n\to S_{n+1}$ is homotopic to the inclusion map $u:S_n\subset S_{n+1}$, hence
   the morphisms $(\bH_b)_{-2n}\to (\bH_b)_{-2n-2}$ induced by $y_i$ and  $u$
   are equal.
   For $b=0$, the action induced by $y_i$ can be read from the edges of graded root as well (as described in
    Example \ref{ex:34}).
   \end{lemma}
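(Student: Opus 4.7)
The plan is to use the straight-line homotopy $H:S_n\times [0,1]\to S_{n+1}$ defined by $H(x,t)=x+tE_i$. The only nontrivial step is to verify that the image indeed lies in $S_{n+1}$; once this is established, $y_i$ and $u$ are homotopic maps $S_n\to S_{n+1}$ and hence induce the same morphism on singular homology, giving the equality of the induced morphisms $(\bH_b)_{-2n}\to (\bH_b)_{-2n-2}$ for every $b\ge 0$.

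To verify well-definedness of $H$, I would pick any $x\in S_n$ and choose a cube $\square=(l,I)\in S_n$ (so $w(\square)\le n$) containing $x$. The segment $\{x+tE_i:t\in [0,1]\}$ is a straight translate in the $i$-th coordinate direction, so I would split into two cases according to whether $i$ belongs to $I$ or not. If $i\notin I$, the segment is contained in the larger cube $(l,I\cup\{i\})$, whose vertex set is the union of the vertex sets of $(l,I)$ and $(l+E_i,I)$; hence its weight equals $\max\{w((l,I)),\,w((l+E_i,I))\}$, which by Lemma \ref{lem:ui}(ii) is bounded by $\max\{n,n+1\}=n+1$, so the segment sits in $S_{n+1}$. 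If $i\in I$, the segment lies in the rectangular region $(l,I)\cup (l+E_i,I)$, and each of these two cubes has weight $\le n+1$ by the same lemma. In both cases the straight-line path stays inside $S_{n+1}$, so $H$ is continuous with image in $S_{n+1}$, and it visibly interpolates between $u$ (at $t=0$) and $y_i$ (at $t=1$).

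For the second assertion, for $b=0$ the module $(\bH_0)_{-2n}=H_0(S_n,\Z)$ is freely generated by the connected components $\{S_n^v\}_v$, which are indexed by the vertices of $\mathfrak{R}(C,o)$ with $w_0(v)=n$, cf. \ref{ss:grroot}. The morphism induced by the inclusion $u$ sends the class of $S_n^v$ to the class of the unique connected component $S_{n+1}^{v'}$ containing it; but this is precisely the edge $[v,v']$ of $\mathfrak{R}(C,o)$. Since $y_i\simeq u$, the action of $y_i$ on $(\bH_0)_{-2n}$ is read off from the very same edges of the graded root.

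The essential technical point — and the one requiring care — is the case analysis showing that the straight-line homotopy does not escape $S_{n+1}$; everything else is formal once the homotopy is in place. I do not expect any additional obstacle, because the combinatorics of the cubical decomposition together with the one-Lipschitz property of $w_0$ in each coordinate direction (encoded in Lemma \ref{lem:ui}) makes the verification a routine computation on the two cases $i\in I$ and $i\notin I$.
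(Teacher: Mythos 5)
Your proof is correct and takes essentially the same approach as the paper: the paper also uses the straight-line homotopy $(x,t)\mapsto x+tE_i$ and simply asserts it maps into $S_{n+1}$. You have supplied the (correct) case analysis on $i\in I$ versus $i\notin I$ that the paper leaves implicit.
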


   However, along the spectral sequence associated with the filtration $\{S_n\cap \frX_{-d}\}_d$
   of $S_n$  the maps $y_i$ induce more interesting morphisms.
   First, consider the following  diagram, which is commutative up to homotopy (by a similar argument as in Lemma \ref{lem:ui2}):

   \begin{picture}(300,60)(0,10)

   \put(50,50){\makebox(0,0){{$S_n\cap \frX_{-l-E_i}$}}}
    \put(50,10){\makebox(0,0){{$S_{n}\cap \frX_{-l}$}}}
    \put(150,50){\makebox(0,0){{$S_{n+1}\cap \frX_{-l-E_i}$}}}
    \put(150,10){\makebox(0,0){{$S_{n+1}\cap \frX_{-l}$}}}
    \put(85,50){\vector(1,0){30}} \put(85,10){\vector(1,0){30}}
    \put(50,40){\vector(0,-1){20}} \put(150,40){\vector(0,-1){20}}
   \put(80,20){\vector(3,2){30}}
   \put(100,55){\makebox(0,0){{$u$}}}  \put(100,15){\makebox(0,0){{$u$}}}
    \put(43,30){\makebox(0,0){{$x_{i}$}}}  \put(160,30){\makebox(0,0){{$x_{i}$}}}
     \put(87,30){\makebox(0,0){{$y_i$}}}
   \end{picture}

\vspace{2mm}

\noindent    It induced two other commutative diagrams:

    \begin{picture}(300,60)(-50,10)

   \put(50,50){\makebox(0,0){{$H_b(S_n\cap \frX_{-l-E_i},S_n\cap \frX_{-l-E_i}\cap \frX_{-|l|-2}) $}}}
    \put(50,10){\makebox(0,0){{$H_b(S_n\cap \frX_{-l},S_n\cap \frX_{-l}\cap \frX_{-|l|-1} )$}}}
    \put(250,50){\makebox(0,0){{$H_b(S_{n+1}\cap \frX_{-l-E_i},S_{n+1}\cap \frX_{-l-E_i}\cap \frX_{-|l|-2} )$}}}
    \put(250,10){\makebox(0,0){{$H_b(S_{n+1}\cap \frX_{-l},S_{n+1}\cap \frX_{-l}\cap \frX_{-|l|-1} ) $}}}
    \put(140,50){\vector(1,0){10}} \put(130,10){\vector(1,0){30}}
    \put(50,40){\vector(0,-1){20}} \put(250,40){\vector(0,-1){20}}
   \put(130,20){\vector(3,2){30}}
   \put(145,55){\makebox(0,0){{$U$}}}  \put(145,15){\makebox(0,0){{$U$}}}
    \put(40,30){\makebox(0,0){{$X_{i}$}}}  \put(265,30){\makebox(0,0){{$X_{i}$}}}
     \put(130,30){\makebox(0,0){{$Y_i$}}}
   \end{picture}

  \noindent  and

   \begin{picture}(300,60)(0,10)

   \put(50,50){\makebox(0,0){{$F_{-d-1}H_b(S_n)$}}}
    \put(50,10){\makebox(0,0){{$F_{-d}H_b(S_n)$}}}
    \put(150,50){\makebox(0,0){{\ \ $F_{-d-1}H_b(S_{n+1})$}}}
    \put(150,10){\makebox(0,0){{$F_{-d}H_b(S_{n+1})$}}}
    \put(85,50){\vector(1,0){30}} \put(85,10){\vector(1,0){30}}
    \put(50,40){\vector(0,-1){20}} \put(150,40){\vector(0,-1){20}}
   \put(80,20){\vector(3,2){30}}
   \put(100,55){\makebox(0,0){{$U$}}}  \put(100,15){\makebox(0,0){{$U$}}}
    \put(40,30){\makebox(0,0){{$X_{i}$}}}  \put(165,30){\makebox(0,0){{$X_{i}$}}}
     \put(85,30){\makebox(0,0){{$Y_i$}}}
   \end{picture}

\vspace{2mm}

   In the first homological  diagram the morphisms $X_{i}$ are automatically trivial, hence,
   we obtain  that $U$ in that diagram  (i.e. on the page $E^1$) acts trivially too. This is compatible with the statement
   from Theorem \ref{th:PP}{\it (c)}. That statement from
   Theorem \ref{th:PP}{\it (c)} was based     on the structure of the
   Orlik--Solomon algebra exploited in \cite{GorNem2015}, the above proof is based merely on the very  existence of the morphisms $Y_i$ induced by $y_i$.

   The second homological commutative diagram from above  proves that $U:{\rm Gr}_{-d}^F\,H_b(S_n)\to
   {\rm Gr}_{-d}^F\,H_b(S_{n+1})$ is trivial as well. This fact can be deduced by the following argument too. Consider again the map $u_n:S_n\hookrightarrow S_{n+1}$, which is compatible
   with the corresponding filtration $\{\frX_{-d}\}_d$. Therefore, $u_n$ induces morphisms at the level of
   spectral sequences associated with  $S_n$ and  $S_{n+1}$. Since at the $E^1_{*,*}$ page the
   induced map $U$ is trivial, it is trivial at the level of all the pages $E^k_{*,*}$,
   including $E^\infty_{*,*}$.

   For further reference we state:

   \begin{lemma}
   The morphisms $U$ induced by the inclusion $S_n\subset S_{n+1}$ on $(E^*_{*,*})_{n}\to
   (E^*_{*,*})_{n+1}$ (in particular, on ${\rm Gr}_{-d}^F\,H_b(S_n)\to
   {\rm Gr}_{-d}^F\,H_b(S_{n+1})$\,) are trivial.
   \end{lemma}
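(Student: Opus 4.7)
The strategy is to first establish the triviality on the $E^1$ page by a diagram chase, and then promote it to all pages via functoriality of the spectral sequence.

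\textbf{Triviality on $E^1$.} First I would note that the inclusion $u_n\colon S_n\hookrightarrow S_{n+1}$ is filtration-preserving: for each $d\geq 0$, $u_n(S_n\cap \frX_{-d})\subset S_{n+1}\cap \frX_{-d}$. Therefore it induces, for every $n$, a morphism of spectral sequences $(E^k_{*,*})_n\to (E^k_{*,*})_{n+1}$, which on $E^1$ is the map $U$ appearing in the statement. To see that this $U$ vanishes on $E^1$, I would exploit the second commutative diagram of the preceding subsection, namely
\[
U=X_i\circ Y_i
\]
at the level of $(E^1_{-l,q})_n\to (E^1_{-l,q})_{n+1}$, where $X_i$ is the map on relative homology induced by the inclusion $x_i\colon \frX_{-l-E_i}\hookrightarrow \frX_{-l}$.

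\textbf{Vanishing of $X_i$.} The key observation is that $\frX_{-l-E_i}\subset \frX_{-|l|-1}$: indeed any cube $(l',I)$ with $l'\geq l+E_i$ satisfies $|l'|\geq |l|+1$. Hence for every $n$,
\[
S_n\cap \frX_{-l-E_i}\ \subset\ S_n\cap \frX_{-l}\cap \frX_{-|l|-1}.
\]
Thus the map on pairs $(S_n\cap \frX_{-l-E_i},\,S_n\cap \frX_{-l-E_i}\cap \frX_{-|l|-2})\to (S_n\cap \frX_{-l},\,S_n\cap \frX_{-l}\cap \frX_{-|l|-1})$ factors with image entirely in the subspace of the target pair, so the induced morphism $X_i$ on relative homology is zero. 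Combined with the commutative diagram, this gives $U=X_i\circ Y_i=0$ on $(E^1_{-l,q})_n\to (E^1_{-l,q})_{n+1}$, and summing over $l$ with $|l|=d$ (using the decomposition of $E^1_{-d,q}$) yields triviality on $(E^1_{-d,q})_n\to (E^1_{-d,q})_{n+1}$.

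\textbf{Propagation to higher pages.} Since the morphism of spectral sequences induced by $u_n$ is compatible with the differentials, the induced map on $E^{k+1}$ is the homology of the map on $E^k$. A map of chain complexes that is zero on $E^1$ remains zero on every $E^k$ for $k\geq 1$, and passing to the limit gives the vanishing on $E^\infty=\mathrm{Gr}^F_{*}\,\bH_*$. The only slightly subtle point — though not really an obstacle — is to keep track of the indices $(-d,q,n)$ correctly through the decomposition $(E^1_{-d,q})_n=\bigoplus_{|l|=d}(E^1_{-l,q})_n$ and to check that $u_n$ respects this decomposition, which it does tautologically since $u_n$ is induced by the identity inclusion of cubes.
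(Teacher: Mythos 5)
Your proposal is correct and follows the same route the paper takes: factor $U$ as $X_i\circ Y_i$ on the $E^1$ page via the commutative diagrams, observe that $X_i$ is zero because $\frX_{-l-E_i}\subset\frX_{-|l|-1}$ (so its image lands in the subspace of the target pair), and then invoke functoriality of the spectral sequence of a filtered space to propagate the vanishing to all pages and to $E^\infty=\mathrm{Gr}^F_*\,\bH_*$. Your explicit justification for the vanishing of $X_i$ is the step the paper compresses into the phrase ``automatically trivial,'' so your write-up is, if anything, slightly more self-contained.
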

\subsection{The actions  $Y_i$}
Similarly as above
 the map $y_i:S_n\to S_{n+1}$ is compatible with the filtrations (i.e.
 $y_i:S_n\cap \frX_{-d}\to S_{n+1}\cap \frX_{-d-1}$), hence it induces
  spectral sequence morphisms
   connecting the spectral sequences of $S_n$ and  $S_{n+1}$.
   Thus,    for  any $k\geq 1$ one has  a morphism
    \begin{equation}\label{eq:yi}
    Y_i: (E^k_{-d, b+d})_{n} \to (E^k_{-d-1, b+d+1})_{n+1},\end{equation}
     which commutes with the differentials of the  spectral sequences.
This is compatible with the disjoint decomposition $S_n=\sqcup_v S_n^v$ into connected components. Via the notation of
\ref{ss:grroot}, if $[v,u]$ is an edge of ${\mathfrak{R}}$ then $y_i(S_n^v)\subset S_{n+1}^u$, hence we have a well-defined restriction
  \begin{equation}\label{eq:yi2}
Y_i|_{(E^k_{-d,b+d})^v_n}: (E^k_{-d, b+d})^v_{n} \to (E^k_{-d-1, b+d+1})^u_{n+1},\end{equation}
 which commutes with the differentials of the  spectral sequences. In particular, all the discussions below regarding the actions $\{Y_i\}_i$ from (\ref{eq:yi}) can be extended
to the level of their restrictions (\ref{eq:yi2}) indexed by the edges of $\mathfrak{R}$.
The corresponding details regarding this extensions  are left to the reader.

The morphisms $Y_i$, for $k=1$ and for any fixed $b$,  make
   $$\oplus_{n,d}\, \oplus_{l\,:\, |l|=d}\,  H_b(S_n\cap \frX_{-l}, S_n\cap \frX_{-l}\cap \frX_{-|l|-1}, \Z)
  =\oplus_{n,d}\,  (E^1_{-d, b+d})_{n}$$
   a $\Z[Y_1,\ldots, Y_r]$-- module.
    The operator $Y_i$ increases the $w$--weight (or $n$) by $+1$, increases the lattice
  filtration by $E_i$ (i.e., $l\rightsquigarrow l+E_i$), and preserves the homological degree $b$.
   This new structure completes considerably
   the rank discussions from the previous sections coded in
   $\bPE_1({\bf T}, Q,h)$ and $PE_k(T,Q,h)$.
   One might think that it is a weaker replacement for the missing $U$--action (killed
   in the relative setup), but, in fact,
   as we will see,  the $Y$--action on ${\rm Gr}^F_*\bH_*$ has even a more subtle structure than the
   $U$--action on $\bH_*$.

  Recall also that above, if  the summand  
  $ H_b(S_n\cap \frX_{-l}, S_n\cap \frX_{-l}\cap \frX_{-|l|-1}, \Z)$
  is non-zero,  then necessarily  $n=w(l)+b$.


%

Let us exemplify the operators $Y_i$
in some concrete situations.
\begin{example}\label{ex:irredY1}
Let $(C,o)$ be an {\bf irreducible curve singularity}   with semigroup $\cS$. We assume that
$1\not\in\cS$. We organize the gaps $\Z_{\geq 0} \setminus \cS$  as follows.
$$\cS=\{0=b_0, a_1,a_1+1,\ldots, b_1, a_2, a_2+1, \ldots, b_2, \ldots, a_k, a_k+1, \ldots, b_k, a_{k+1}, a_{k+2}, \ldots\},$$
where the gaps are exactly $\cup_{i=0}^k\{\mbox{integers strictly between $b_i$ and $a_{i+1}$}\}$ and
$a_{k+1}$ is the conductor.

Fix some $s\in\cS$. Note that $w(s+1)=w(s)+1$, hence $s$ determines a generator
$[s]$ in $H_0(S_{w(s)}\cap \frX_{-s}, S_{w(s)}\cap \frX_{-s-1},\Z)=\Z=\Z_{(s)}$.
This element corresponds to $T^sQ^{w(s)}$ in $PE_1(T,Q)$.
It has level degree $d=s$ and weight $n=w(s)$.
Then $Y([s]):=Y_1([s])=[s+1]$ if $s+1\in\cS$, and $=0$ otherwise.
In particular, the  $Y$--module
$\sum _{s\in \cS}H_0(S_{w(s)}\cap \frX_{-s}, S_{w(s)}\cap \frX_{-s-1},\Z)=\oplus _{s\in\cS}\Z_{(s)}$ has the
 following irreducible  summands:
the $Y$--modules of finite $\Z$--rank:
$$\Z_{(0)}, \  \ \Z_{(a_1)}\stackrel{Y}{\longrightarrow} \cdots  \stackrel{Y}{\longrightarrow}
\Z_{(b_1)}, \ \cdots,  \Z_{(a_k)}\stackrel{Y}{\longrightarrow} \cdots  \stackrel{Y}{\longrightarrow}
\Z_{(b_k)}$$
and one $Y$--module of infinite $\Z$--rank
$ \Z_{(a_{k+1})}\stackrel{Y}{\longrightarrow}  \Z_{(a_{k+2})}\stackrel{Y}{\longrightarrow}\cdots$.

Take for example the semogroups $\cS_1=\langle 4,5,7\rangle$ and $\cS_{2}=\langle 3,7,8\rangle$,
cf. Remark \ref{rem:3.5.1}. They have the very same $\Z[U]$--module
$\bH_0=\et^-_{4}\oplus \et_{2}(1)\oplus \et_{0}(1)$, and
$PE_k(T=1,Q)=Q^{-2}+2Q^{-1}+2Q^0+Q^1+\cdots$. The common $(-w)$--graded root is shown by the left graph below.

This, for both cases,  can be compared with $\oplus_d\,{\rm Gr}^F_{-d} \,\bH_0$ with Poincar\'e
series $PE^\infty(T,Q)=\sum_{s\in\cS}T^sQ^{w(s)}$. Of course, this object still keeps all the information about the semigroup.
 From this if we  delete the level degrees (or in $PE^\infty$ we put $T=1$)
we obtain that $\oplus_d\,{\rm Gr}^F_{-d} \,\bH_0$ endowed (only) with the $w$--weights coincides with the $w$--weighted
$\bH_0$ as graded $\Z$--modules (which in both cases of $\cS_1$ and $\cS_2$ are the same). Then we can compare $\bH_0$ and  $\oplus_d\,{\rm Gr}^F_{-d} \,\bH_0$, where
both are endowed with their $w$--weights. The point is that
for the two cases of $\cS_1$ and $\cS_2$, the  $Y_1$--module structures on  $\oplus_d\,{\rm Gr}^F_{-d} \,\bH_0$
(weighted by $n=w(d)$) distinguish the two cases (while the $U$--multiplication on $\bH_0$ not);
see the second and the third root below (where again the edges code the action).

\begin{picture}(320,120)(170,-20)

\put(177,50){\makebox(0,0){\footnotesize{$0$}}} \put(177,60){\makebox(0,0){\footnotesize{$1$}}}
\put(177,70){\makebox(0,0){\footnotesize{$2$}}}
\put(177,10){\makebox(0,0){\footnotesize{$-w$}}}
\dashline{1}(200,60)(240,60)
\dashline{1}(200,50)(240,50)
\dashline{1}(200,70)(240,70)
\put(220,10){\makebox(0,0){$\vdots$}} \put(220,30){\circle*{3}}
\put(220,40){\circle*{3}} \put(210,50){\circle*{3}}
\put(220,50){\circle*{3}}
\put(220,60){\circle*{3}} \put(220,20){\line(0,1){50}}
\put(210,50){\line(1,-1){10}}
\put(220,70){\circle*{3}}
\put(230,60){\circle*{3}}
\put(220,50){\line(1,1){10}}
 \put(220,-10){\makebox(0,0){\footnotesize{$U$--action}}}
 \put(320,-10){\makebox(0,0){\footnotesize{$Y$--action}}}
 \put(420,-10){\makebox(0,0){\footnotesize{$Y$--action}}}

\put(320,70){\circle*{3}}
\put(330,60){\circle*{3}}
\put(320,10){\makebox(0,0){$\vdots$}}
\put(320,30){\circle*{3}}
\put(320,40){\circle*{3}}
\put(310,50){\circle*{3}}
\put(320,50){\circle*{3}}
\put(320,60){\circle*{3}}
\put(320,70){\line(0,-1){8}}
\put(330,60){\line(-1,-1){10}}
\put(320,40){\line(0,-1){8}}
\put(320,30){\line(0,-1){8}}
\put(320,50){\line(0,-1){8}}

\put(420,70){\circle*{3}}
\put(430,60){\circle*{3}}
\put(420,10){\makebox(0,0){$\vdots$}}
\put(420,30){\circle*{3}}
\put(420,40){\circle*{3}}
\put(410,50){\circle*{3}}
\put(420,50){\circle*{3}}
\put(420,60){\circle*{3}}
\put(420,70){\line(0,-1){8}}
\put(420,60){\line(0,-1){8}}
\put(420,40){\line(0,-1){8}}
\put(420,30){\line(0,-1){8}}
\put(420,50){\line(0,-1){8}}

\put(320,85){\makebox(0,0){$\langle 4,5,7\rangle$}}
\put(420,85){\makebox(0,0){$\langle 3,7,8\rangle$}}
\put(500,85){\makebox(0,0){$\langle 4,5,7\rangle$}}
\put(560,85){\makebox(0,0){$\langle 3,7,8\rangle$}}

\put(500,10){\makebox(0,0){$\vdots$}}
\put(500,30){\makebox(0,0){\footnotesize{10}}}
\put(500,40){\makebox(0,0){\footnotesize{9}}}
\put(490,50){\makebox(0,0){\footnotesize{0}}}
\put(500,50){\makebox(0,0){\footnotesize{8}}}
\put(500,60){\makebox(0,0){\footnotesize{5}}}
\put(500,70){\makebox(0,0){\footnotesize{4}}}
\put(510,60){\makebox(0,0){\footnotesize{7}}}

\put(560,10){\makebox(0,0){$\vdots$}}
\put(560,30){\makebox(0,0){\footnotesize{10}}}
\put(560,40){\makebox(0,0){\footnotesize{9}}}
\put(550,50){\makebox(0,0){\footnotesize{0}}}
\put(560,50){\makebox(0,0){\footnotesize{8}}}
\put(560,60){\makebox(0,0){\footnotesize{7}}}
\put(560,70){\makebox(0,0){\footnotesize{6}}}
\put(570,60){\makebox(0,0){\footnotesize{3}}}

\end{picture}

The diagrams on the right show that different level--degrees corresponding to the $w$--weights of the
generators of the root. They are the semigroup elements. Note that $Y$ acts via the following pattern:
 $Y(1_u)=1_v$ if and only if
  $u,v\in\calS$ satisfy $v=u+1$. Hence the $Y$--action still keeps considerably information about $\cS$.

If $(C,o)$ is an irreducible {\it plane}  curve singularity, then
by the formula $\sum_{s\in\cS}t^s=\Delta(t)/(1-t)$ (cf. Theorem
 \ref{Poincare vs Alexander}), the Alexander polynomial transforms into
 $$\Delta(t)=1-t+t^{a_1}-t^{b_1+1}+t^{a_2}-t^{b_2+1}+\cdots t^{a_k}-t^{b_k+1}+t^{a_{k+1}}.$$
Then the  above $Y$ action can be read from this shape
of $\Delta(t)$ too (compatibly to  the staircase diagrams of $(C,o)$
in the language of ${\rm HFL}^-$,  see e.g. \cite{Kr}).
\end{example}

\begin{example}\label{ex:u1}
{\bf Assume that $(C,o)=\{x^2+y^2=0\}$}. Then $r=2$
and $\bPE_1(T_1,T_2, Q,h)= 1+\frac{T_1T_2(1+Qh)}{(1-T_1Q)(1-T_2Q)}$, cf. \ref{ex:22}.
The $Y_1$--action on $\oplus_{n,d}\, \oplus_{l\,:\, |l|=d}\, (E^1_{-d, b+d})_{n}$ is the following.
Corresponding to $(0,0)\in\cS$, or to the monomial $1$ of $\bPE_1$, the actions of
$Y_1$ and $Y_2$ are trivial. Hence, it forms an irreducible $\Z[Y_1,Y_2]$--module with
 $\Z$--rank one. Let us denote it by $\Z_{w=0}$.
There are two other irreducible  $\Z[Y_1,Y_2]$ modules, both of them generated at the semigroup
entry $(1,1)$. One of them corresponds to $b=0$, or to the monomials
$\{T_1^{a_1}T_2^{a_2}Q^{a_1+a_2-2}h^0\}_{(a_1,a_2)\geq (1,1)}$,
with actions $Y_1(T_1^{a_1}T_2^{a_2}Q^{a_1+a_2-2})=T_1^{a_1+1}T_2^{a_2}Q^{a_1+a_2-1}$,
$Y_2(T_1^{a_1}T_2^{a_2}Q^{a_1+a_2-2})=T_1^{a_1}T_2^{a_2+1}Q^{a_1+a_2-1}$.
It is generated over $\Z[Y_1,Y_2]$ by $T_1T_2Q^0h^0$,
let us  denote it by
$\Z[Y_1,Y_2]_{w=0}$.
The other, corresponding to $b=1$ (or to $h^1$) is given by
$Y_1(T_1^{a_1}T_2^{a_2}Q^{a_1+a_2-1}h)=T_1^{a_1+1}T_2^{a_2}Q^{a_1+a_2}h$,
$Y_2(T_1^{a_1}T_2^{a_2}Q^{a_1+a_2-1}h)=T_1^{a_1}T_2^{a_2+1}Q^{a_1+a_2}h$. It is generated by
$T_1T_2QH$, let us denote it by $\Z[Y_1,Y_2]_{w=1}$.
These two modules are isomorphic with $\Z[Y_1,Y_2]$ with the corresponding degree shifts.

 Pictorially, denoted by `big bullets' ($b=0$) and `circles' $(b=1)$,  they are:

 \begin{picture}(300,70)(-50,-10)

\put(10,10){\circle*{2}}\put(20,20){\circle*{4}}
\put(15,0){\makebox(0,0){$S_0$}}

\qbezier(22,22)(40,30)(75,22)
\put(70,23){\vector(4,-1){6}}

\qbezier(22,22)(40,40)(65,32)
\put(60,33){\vector(4,-1){6}}

\qbezier(82,22)(100,30)(135,22)
\put(130,23){\vector(4,-1){6}}
\qbezier(82,22)(100,40)(125,32)
\put(120,33){\vector(4,-1){6}}

\qbezier(72,32)(90,40)(125,32)
\put(120,33){\vector(4,-1){6}}
\qbezier(72,32)(90,50)(115,42)
\put(110,43){\vector(4,-1){6}}

\qbezier(280,27)(300,40)(330,27)
\put(328,27){\vector(3,-1){6}}
\qbezier(280,30)(300,50)(320,37)
\put(318,37){\vector(3,-1){6}}

\put(60,10){\line(1,0){10}}\put(60,10){\line(0,1){10}}
\put(70,10){\line(0,1){20}}\put(60,20){\line(1,0){20}}
\put(60,12){\line(1,0){10}}\put(60,14){\line(1,0){10}}
\put(60,16){\line(1,0){10}}\put(60,18){\line(1,0){10}}
\put(65,0){\makebox(0,0){$S_1$}}
\put(70,30){\circle*{4}}\put(80,20){\circle*{4}}

\put(110,10){\line(1,0){20}}\put(110,10){\line(0,1){20}}
\put(130,10){\line(0,1){20}}\put(110,30){\line(1,0){20}}
\put(120,30){\line(0,1){10}}\put(130,20){\line(1,0){10}}
\put(110,12){\line(1,0){20}}\put(110,14){\line(1,0){20}}\put(110,16){\line(1,0){20}}
\put(110,18){\line(1,0){20}}\put(110,20){\line(1,0){20}}\put(110,22){\line(1,0){20}}\put(110,24){\line(1,0){20}}
\put(110,26){\line(1,0){20}}\put(110,28){\line(1,0){20}}
\put(120,0){\makebox(0,0){$S_2$}}
\put(120,40){\circle*{4}}\put(130,30){\circle*{4}}\put(140,20){\circle*{4}}

\put(210,10){\circle*{2}}\put(220,20){\circle*{2}}
\put(215,0){\makebox(0,0){$S_0$}}

\put(260,10){\line(1,0){10}}\put(260,10){\line(0,1){10}}
\put(270,10){\line(0,1){20}}\put(260,20){\line(1,0){20}}
\put(260,12){\line(1,0){10}}\put(260,14){\line(1,0){10}}\put(260,16){\line(1,0){10}}
\put(260,18){\line(1,0){10}}
\put(265,0){\makebox(0,0){$S_1$}}
\put(220,20){\circle*{2}}
\put(275,25){\circle{4}}

\put(310,10){\line(1,0){20}}\put(310,10){\line(0,1){20}}
\put(330,10){\line(0,1){20}}\put(310,30){\line(1,0){20}}
\put(320,30){\line(0,1){10}}\put(330,20){\line(1,0){10}}
\put(310,12){\line(1,0){20}}\put(310,14){\line(1,0){20}}\put(310,16){\line(1,0){20}}
\put(310,18){\line(1,0){20}}\put(310,20){\line(1,0){20}}\put(310,22){\line(1,0){20}}
\put(310,24){\line(1,0){20}}
\put(310,26){\line(1,0){20}}\put(310,28){\line(1,0){20}}
\put(320,0){\makebox(0,0){$S_2$}}
\put(325,35){\circle{4}}\put(335,25){\circle{4}}

\end{picture}

Next, we can analyse the page $E^2$ too. The differential $d^1$ on $\Z_{w=0}$ is trivial, hence this
term survives in $E^2$ too. The other two modules are connected by a nontrivial graded
differetial $\oplus_n d^1_{*,*}(S_n)$, which is
a  $w$--homogeneous morphism of modules $\Z[Y_1,Y_2]_{w=1}\to \Z[Y_1,Y_2]_{w=0}$, provided by multiplication by $Y_1-Y_2$.
Hence the $E^2 $ terms, as a $\Z[Y_1,Y_2]$--module has two irreducible submodules, one of them is a
$\Z_{w=0}$ of $\Z$-rank one and it has a trivial $Y_1,Y_2$--action, while the other one is
$\Z[Y]$, where both $Y_1$ and $Y_2$ act as multiplication by $Y$, and its generator sit at
the semigroup element $(1,1)$, its weight $w=0$ and $h=0$. Its Poincar\'e series is
$T^2+T^3Q+T^4Q^2+\cdots= PE^2(T,Q)-1$.

The $\Z[Y_1,Y_2]$ module  $ E^2$ (and any $E^k$ for $k\geq 2$), when we keep only the $w$--weight and the $Y_1$ action,
 pictorially is the following

\begin{picture}(320,60)(150,0)

\put(320,5){\makebox(0,0){$\vdots$}}
\put(320,30){\circle*{3}}
\put(320,40){\circle*{3}}
\put(320,20){\circle*{3}}
\put(330,50){\circle*{3}}
\put(310,50){\circle*{3}}
\put(330,50){\line(-1,-1){10}}
\put(320,40){\line(0,-1){8}}
\put(320,30){\line(0,-1){8}}
\put(320,20){\line(0,-1){8}}
\put(400,30){\makebox(0,0){$(\mbox{$Y_1$ and $Y_2$ act identically})$}}
\end{picture}
\end{example}

\begin{example}\label{ex:u1u4}
{\bf Assume that $(C,o)$ is an ordinary $r$-tuple } (cf. \ref{bek:ANcurves}).
For the invariants $\bPE_1({\bf T}, Q,h)$, $PE_k(T,Q,h)$, see \ref{ex:gen} (with $c=(1,1,\ldots, 1)$).
Then based on the discussion from \ref{ex:gen} (or even by construction of the spaces $S_n$)
 one deduce that the $\Z[Y_1,\ldots, Y_r]$ structure on
the first page $E^1$ is
$$\Z_{w=0}\oplus \, \oplus _{b=0}^{r-1} \ \Z[Y_1, \ldots, Y_r]_{(b)}^{\oplus \binom{r-1}{b}},$$
where $\Z_{w=0}$ has $\Z$--rank one, the generator corresponds to the lattice point $l=0$, $w(0)=0$ and $b=0$,
while  $\Z[Y_1, \ldots, Y_r]_{(b)}$ is generated at the lattice point $l=(1,\ldots, 1)$, $n=w(l)+b=2-r+b$ (i.e. it
corresponds to the monomial ${\bf T}^{(1, \ldots ,1 )}Q^{2-r+b }h^b$ and each $Y_i$ is multiplication by $T_iQ$).
\end{example}

The collapse of the $\Z[Y_1,\ldots, Y_r]$--action on $E^{\geq 2}$ in Example \ref{ex:u1}
 is not an accident, it is a genaral fact valid for any $(C,o)$.

\begin{proposition}\label{prop:collapse} Fix an isolated singularity $(C,o)$.
For any $k\geq 2$ the action of $Y_i$ on $\oplus_{n,d} (E^k_{*,*})_n$ is independent of\, $i$.
In particular, $\oplus_{n,d} (E^k_{*,*})_n$ admits a natural $\Z[Y]$--module structure.

${\rm Gr}\,\bH_*:=\oplus_{n,d} (E^\infty_{*,*})_n$ and $\bH_*$ both  considered as a graded $\Z$--modules, graded
by the $w$--weights (equivalently, by the summands associated with each $S_n$) are isomorphic as
graded $\Z$--modules. However, ${\rm Gr}\,\bH_*$ considered as a $Y$--module
 and $\bH_*$  considered as a $U$--module  usually  do not agree.
\end{proposition}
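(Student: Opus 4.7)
The plan is to construct, for each pair $i, j$, an explicit chain homotopy between the two maps $y_i, y_j : S_n \to S_{n+1}$ that is strictly filtration-preserving. On the first page of the spectral sequence this will realise $Y_i - Y_j$ as $d^1 \sigma_{ij} + \sigma_{ij} d^1$, killing the difference from $E^2$ onwards.

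First, for each $i$ I would define a prism operator $\Sigma_i: \calC_*(S_n) \to \calC_{*+1}(S_{n+1})$ realising the straight-line homotopy $h_i(x, t) := x + t E_i$ from the inclusion $u: S_n \hookrightarrow S_{n+1}$ to $y_i$. Concretely, on a cube $\square = (l, I)$ with $i \notin I$, $\Sigma_i(l, I) = \pm (l, I \cup \{i\})$; when $i \in I$ the prism is degenerate and one uses the standard cubical Eilenberg--Zilber convention (setting it to zero, or splitting it into the two relevant unit cubes). A direct cube-by-cube check then yields $\partial \Sigma_i + \Sigma_i \partial = y_i - u$. The crucial filtration observation is that for any $x$ with $|x| \ge d$ and any $t \in [0,1]$ one has $|h_i(x, t)| = |x| + t \ge d$, so $\Sigma_i$ sends $\calC_*(S_n \cap \frX_{-d})$ into $\calC_{*+1}(S_{n+1} \cap \frX_{-d})$ --- it is strictly filtration-preserving, whereas $y_i$ and $y_j$ strictly drop filtration by $-1$.

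Setting $\sigma_{ij} := \Sigma_i - \Sigma_j$, we get $\partial \sigma_{ij} + \sigma_{ij} \partial = y_i - y_j$ at chain level. Decomposing $\partial = \partial^0 + \partial^1 + \cdots$ according to filtration drop (where $\partial^r$ lowers filtration by $r$), and matching filtration degrees on both sides (the right hand side lies purely in filtration drop $-1$), we obtain $\partial^0 \sigma_{ij} + \sigma_{ij} \partial^0 = 0$ and $\partial^1 \sigma_{ij} + \sigma_{ij} \partial^1 = y_i - y_j$. The first identity means $\sigma_{ij}$ descends to the associated graded, inducing a well-defined map on $E^1$; via the identification of $d^1$ with the class of $\partial^1$, the second becomes $d^1 \sigma_{ij} + \sigma_{ij} d^1 = Y_i - Y_j$ on $E^1$. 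Hence $Y_i - Y_j$ is nullhomotopic on the first page and vanishes on $E^2$ and every later page, yielding the common $Y$-action and the $\Z[Y]$-module structure on $(E^k_{*,*})_n$ for all $k \ge 2$.

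The graded isomorphism $\mathrm{Gr}\, \bH_* \cong \bH_*$ as graded $\Z$-modules (graded by the $w$-weight $n$) is then a direct consequence of convergence: each $S_n$ being a finite cubical complex, the spectral sequence has finite length, so $\oplus_d (E^\infty_{-d, *})_n = \mathrm{Gr}^F H_*(S_n, \Z) \cong H_*(S_n, \Z) = (\bH_*)_{-2n}$ as graded groups; summing over $n$ gives the claim. That this isomorphism does not upgrade to a $Y$-versus-$U$ module isomorphism is already visible in Example \ref{ex:irredY1}: the two irreducible germs with semigroups $\langle 4,5,7\rangle$ and $\langle 3,7,8\rangle$ share the same $\Z[U]$-module $\bH_0$, but the explicit graded roots in that example show that the two $Y$-actions on $\mathrm{Gr}\,\bH_0$ are distinct.

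The main obstacle I anticipate is the chain-level bookkeeping of the prism operators $\Sigma_i$ when the direction $E_i$ already enters the cube $\square$. Acyclic-models reasoning guarantees that the homotopy exists, but to preserve the $\frX$-filtration one must check that whichever degenerate-prism convention is adopted every contribution still lies in $\frX_{-|l|}$, and that the signs in $\partial\Sigma_i + \Sigma_i\partial = y_i - u$ come out correctly. Once this bookkeeping is handled, the spectral-sequence step and the convergence argument are both formal.
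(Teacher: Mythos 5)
Your proof takes essentially the same route as the paper. The paper's argument is the one-chain version of yours: given a representative $\alpha$ in $S_n\cap\frX_{-d}$, it forms the prism $\beta_i=\cup_{t\in[0,1]}(\alpha+tE_i)$, observes that $\beta_i-\beta_j$ lies in $S_{n+1}\cap\frX_{-d}$ with $\partial(\beta_i-\beta_j)=y_i(\alpha)-y_j(\alpha)$, and concludes $y_i(\alpha)-y_j(\alpha)\in\partial Z^1_{-d}\subset B^2_{-d-1}$, so $Y_i=Y_j$ on $E^2$ and hence on every later page. You promote the prism to an operator $\Sigma_i$ and decompose the resulting chain-homotopy identity by filtration degree, which is the same observation packaged as an $E^1$-level homotopy $d^1\bar\sigma_{ij}+\bar\sigma_{ij}d^1=Y_i-Y_j$; this is slightly more systematic and handles arbitrary (not necessarily closed) chains cleanly, at the cost of the cubical bookkeeping you flag for the degenerate direction $i\in I$. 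The paper's proof block actually stops after the first paragraph of the proposition; your closing remarks on $\mathrm{Gr}\,\bH_*\cong\bH_*$ as graded $\Z$-modules (via finite convergence of each $S_n$-spectral sequence) and on the $Y$-versus-$U$ distinction via Example \ref{ex:irredY1} are exactly what the paper leaves implicit, and they are correct.
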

\begin{proof}
Let $\alpha$ be a chain in $S_n\cap \frX_{-d}$. For any $t\in[0,1]$ set $y_{i,t}:\frX\to \frX$  given by $x\mapsto x+tE_i$.
Denote  $\cup_{t\in[0,1]} y_{i,t}(\alpha)$ by $\beta_i$. Then $\beta_i-\beta_j\in S_{n+1}\cap \frX_{-d}$ and
$\partial (\beta_i-\beta_j)=y_i(\alpha)-y_j(\alpha)$. Then using the general construction of spectral sequences,
$y_i(\alpha)-y_j(\alpha)\in (B^2_{*,*})_{n+1}$, hence its class $[y_i(\alpha)-y_j(\alpha)]\in (E^2_{*,*})_{n+1}
=(Z^2_{*,*})_{n+1}/(B^2_{*,*})_{n+1}$ is zero. (Cf. \cite[page 131]{SpecSeq}.)
\end{proof}

\begin{remark}
There is another aspect and difference in the comparison of
${\rm Gr}\,\bH_*$ considered as a $Y$--module
 and $\bH_*$  considered as a $U$--module.
Assume that $(C,o)$ is Gorenstein. Then $\bH_*(C,o)$ and the $U$--action respects the Gorenstein $\Z_2$--symmetry
$l\mapsto c-l$, however, the $Y$--action on ${\rm Gr}\,\bH_*$ does not. (See e.g. the case of an irreducible plane curve singularity, or Example \ref{ex:u1}.)

\end{remark}
\begin{example}\label{ex:u1u3}
{\bf Assume that  $(C,o)$ is the plane curve singularity
$\{x^3+y^3=0\}$}. (This is a continuation of Example \ref{ss:33}.)
For each $b\geq 0$ the $\Z[Y_1,Y_2, Y_3]$--module $\oplus_{n,d}(E^1_{-d, b+d})_n$
decomposes into irreducible summands. They are the following.

\underline{Case $b=0$:} \

$M_{b=0}^1=\Z$ of $\Z$--rank one generated by ${\bf T}^0Q^0h^0$;

$M_{b=0}^2\simeq \Z[Y_1,Y_2,Y_3]/ (Y_1Y_2,Y_2Y_3,Y_3Y_1)$ generated by $T_1T_2T_3Q^{-1}h^0$;

$M_{b=0}^3\simeq \Z[Y_1,Y_2,Y_3]$ generated by $T_1^2T_2^2T_3^2Q^0h^0$.

\underline{Case $b=1$:} \

$M_{b=1}^2$ is supported by the semigroup elements
$(1,1,1)\cup\cup_{k\geq 2}(k,1,1)\cup \cup_{k\geq 2}(1,k,1)\cup \cup_{k\geq 2}(1,1,k)$, similarly as
$M_{b=0}^2$, but in this case the $(1,1,1)$--homogeneous part has $\Z$--rank 2.
All other homogeneous $\Z$--summands have rank one.
So, it has two generators both coded by $T_1T_2T_3Q^0h$.

$M_{b=1}^{3,3'}$, two copies of $\Z[Y_1,Y_2,Y_3]$, both generated  at $T_1^2T_2^2T_3^2Qh$.

\underline{Case $b=2$:} \

$M_{b=2}^{3}\simeq \Z[Y_1,Y_2,Y_3]$ generated  at $T_1^2T_2^2T_3^2Q^2h^2$.

\vspace{2mm}

This shows that we might have irreducible $\Z[Y_1,Y_2,Z_3]$--modules with homogeneous summand associated with certain $l\in\cS$
of $\Z$--rank $\geq 2$. (This never happens if $r=1$.)

For $k=\infty$, the $Z[Y]$--modules  $\oplus_{n,d}(E^1_{-d, b+d})_n$ for $x^3+y^3$ and $x^3+y^4$ agree (for the last one see
Example \ref{ex:irredY1}.)

The above picture coincides  with the description of HFL$^-$ of the torus knot $T_{3,3}$  in \cite{GH}.
\end{example}

\begin{example}\label{ex:u1u2}
The reader is invited to describe the $\Z[Y_1,Y_2]$--modules in the case of Example \ref{ex:decsing2}  (case
$\langle 3,4\rangle \vee \langle 3,4\rangle $).
Here appear modules of type $\Z[Y_1,Y_2]/(Y_1^2,Y_2^2)$ and $\Z[Y_1,Y_2]/(Y_1^2)$ as well.
\end{example}

\begin{remark}
There is a $\Z[Y_1,\ldots, Y_r]$--action on the relative homologies as well. Indeed, the morphism
$$Y_i:H_*(S_n\cap  \frX_{-l}, S_{n-1}\cap \frX_{-l})\to
H_*(S_{n+1}\cap  \frX_{-l-E_i}, S_{n}\cap \frX_{-l-E_i})$$
induced by $x\mapsto x+E_i$ is well defined and usually nontrivial (and distinct for diffenet $i$'s).
\end{remark}

\subsection{Deformations and functors}

Theorem \ref{th:DEF} has the following consequence.
\begin{theorem}\label{th:DEF2}
    Consider  a flat deformation of isolated  curve singularities ${(C_t,o)}_{t\in(\bC,0)}$.

   Assume that  either (a) $(C_{t=0},o)$ irreducible, or (b)  ${(C_t,o)}_{t\in(\bC,0)}$ is a delta-constant deformation
 of   plane curve singularities such that
   the number of irreducible components stays  stable.

 Then the   induced  morphism $ \bH_*(C_{t\not=0},o)\to \bH_*(C_{t=0},o)$  is compatible with
   all the filtrations and degrees (weight, level and homological).
   That is, they induces morphisms at the level of $(E^k_{-d,q})_n$, preserving  all the degrees.
   Moreover, these morphisms are also   compatible with
   the graded graph-map at the level of graded roots ${\mathfrak R}(C_{t\not=0},o)\to{\mathfrak R}(C_{t=0},o)$ and also with the $Y_i$ actions.

\end{theorem}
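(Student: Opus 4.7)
The plan is to upgrade the morphism from Theorem \ref{th:DEF} to a morphism of all auxiliary structures by a single observation: in both cases (a) and (b) the lattice map $\phi$ satisfies $|\phi(\ell)|=|\ell|$ for every $\ell\in\Z_{\geq 0}^r$. Combined with the weight inequality $w_{t=0}(\phi(\ell))\leq w_{t\neq 0}(\ell)$ already established in Theorem \ref{th:DEF}, this level-preservation shows that the piecewise linear extension $\Phi\colon\R_{\geq 0}^r\to\R_{\geq 0}^{r'}$ of $\phi$ sends the doubly filtered subspace $S_n\cap\frX_{-d}$ of the source into the corresponding subspace $S_n\cap\frX_{-d}$ of the target. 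Thus $\Phi$ is a morphism of doubly filtered topological pairs indexed by the weight $n$ and the level $d$ simultaneously.

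Next I would feed this doubly filtered map into the naturality of the spectral sequence construction. Since $\Phi$ preserves both filtrations, it induces for each $n$ and each page $k\geq 1$ a morphism
\begin{equation*}
\Phi_{*}\colon (E^k_{-d,q})_{n}(C_{t\neq 0})\longrightarrow (E^k_{-d,q})_{n}(C_{t=0})
\end{equation*}
commuting with the differentials $d^{k}$ and preserving the three gradings $(n,d,-d+q)$. In the limit $k\to\infty$ these assemble into the graded map on the associated graded $\mathrm{Gr}^{F}_{-d}\bH_{-d+q}$ induced by the morphism of Theorem \ref{th:DEF}. The refined lattice filtration of Section \ref{s:LFilt} is likewise respected: in case (b), $\Phi$ is the identity at the lattice level and sends $S_n\cap\frX_{-l}$ to $S_n\cap\frX_{-l}$, while in case (a) it sends $S_n\cap\frX_{-l}$ into $S_n\cap\frX_{-|l|}$, which is the appropriate piece of the one-variable target.

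Compatibility with the operators $Y_1,\ldots,Y_r$ reduces to intertwining $\Phi$ with the lattice shifts. In case (b), $\Phi(x+E_i)=\Phi(x)+E_i$, so $\Phi\circ y_i=y_i\circ\Phi$ on the nose and $\Phi_{*}Y_i=Y_i\Phi_{*}$ on each page of the spectral sequences. In case (a), $\Phi(x+E_i)=\Phi(x)+1$ for every $i$, so $\Phi\circ y_i=y\circ\Phi$, where $y$ is the unique shift on the one-dimensional target; the induced relation $\Phi_{*}Y_i=Y\Phi_{*}$ exhibits $\Phi_{*}$ as a morphism of $\Z[Y_1,\ldots,Y_r]$-modules into a $\Z[Y]$-module along the ring map $Y_i\mapsto Y$. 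The compatibility with the graded-root graph map of Theorem \ref{th:DEF} is then automatic: $\Phi$ sends each connected component of $S_n(C_{t\neq 0})$ into a connected component of $S_n(C_{t=0})$, which is exactly the vertex map of roots, and both the per-vertex level filtration $\{S_n^v\cap\frX_{-d}\}_d$ and the restricted operators from \eqref{eq:yi2} are defined component-wise, so everything transports functorially.

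The whole argument reduces to the single identity $|\phi(\ell)|=|\ell|$ together with the weight comparison already established in Theorem \ref{th:DEF}; there is no new homological input. The main technical point to be careful with is the chain-level realization of $\Phi$ in case (a), where source cubes of dimension $q\geq 2$ must be sent to degenerate $q$-chains in the one-dimensional target. This is settled by the realization used in the proof of Theorem \ref{th:DEF} (for instance by passing through the continuous piecewise linear extension and taking singular homology), and one only needs to remark that the same realization respects the level filtration, which it does because the level of every cube depends only on its initial vertex $\ell$ and $|\phi(\ell)|=|\ell|$.
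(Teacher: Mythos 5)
Your proposal is correct and captures exactly what the paper leaves implicit (the paper states Theorem \ref{th:DEF2} as a consequence of Theorem \ref{th:DEF} without spelling out a proof): the single new input is the observation that $|\phi(\ell)|=|\ell|$ in both cases (a) and (b), so that the piecewise-linear realization of $\phi$ is a morphism of doubly-filtered spaces $(S_n,\{S_n\cap\frX_{-d}\}_d)$, and naturality of the associated spectral sequences together with the intertwining relations $\Phi\circ y_i=y\circ\Phi$ (resp.\ $y_i\circ\Phi$) gives everything. Your remark about degenerate image chains in case (a) and your verification that the level of a cube is governed by its initial vertex address the only potentially delicate point, so there is nothing to add.
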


\section{Plane curves. Relation with Heegaard Floer Link homology}\label{s:HFL}

%

\subsection{Review of Heegaard Floer Link homology}\label{ss:HFL}
Assume that $(C,o)$ is a  plane curve singularity, $(C,o)\subset (\bC^2,0)$.
It defines the link $L(C,o)=L\subset S^3$  with link-components $L_1,\ldots, L_r$.
In this section we relate the above considered filtered lattice homology with
the Heegaard Floer link homology of $L$. For more on Heegaard Floer Link theory  see e.g.
 \cite{MO,os,os2,OSzHol,os4,ras}. In the next paragraphs we recall in short some notations and facts.

To every 3-manifold $M$ with fixed Heegaard splitting (and extra structures) one can associate a {\em Heegaard Floer complex} $CF^{-}(M)$
of free $\Z[U]$-modules. The operator $U$ has homological degree $(-2)$, and the differential $d$ has degree $(-1)$. This complex is not unique, but different choices
(e.g. the splitting)
lead to quasi-isomorphic complexes. Therefore the homology of $CF^{-}(M)$ is an invariant of $M$ called
 {\em Heegaard Floer homology} and denoted by $HF^{-}(M)$. In this note we will have $M=S^3$;  in this case  $HF^{-}(S^3)=\Z[U]$.

To a link $L=L_1\cup\ldots\cup L_{r}\subset S^3$ one can associate
a $\Z^r$--filtered complex of
$\Z[U_1,\ldots,U_r]$-modules, denoted by $CFL^{-}(L)$.
 The  $\Z^r$--filtration is called the Alexander filtration.
 The operators $U_i$
 have homological degree $(-2)$ and shift the filtration level by $E_i$.
 If one ignores the filtration, then the complex is quasi-isomorphic
to the Heegaard Floer complex  $CF^{-}(S^3)$, where all the operators $U_i$ are homotopic to each other,
cf.  \cite{OSzHol}. One can
consider the complex also as a $\Z[U]$-module, where $U=U_1$.
However, the filtration captures nontrivial information about the link.

For $l\in \Z^r$, we will denote the Alexander filtration by $\{A^-(l)\}_l$. Each  $\Cc(l)=(\oplus_\nu A^{-,\nu}(l),d)$
is a subcomplex of $CFL^{-}(L)$ at filtration level $v$ (in \cite{MO}
 these complexes are denoted  by $\mathfrak{A}^{-}(v)$).
  It is spanned by the elements of $CFL^{-}(L)$ with Alexander filtration
 greater than or equal to $l$. (For a more clear match with the algebraic picture, we reverse the sign of $l$, thus reversing the direction of the filtration as well.) The upper index $\nu$ denotes the homological (Maslow) grading.   They 
satisfy
\begin{equation}\label{eq:INCL}\begin{array}{l}
\Cc(l_1)\supset  \Cc(l_2) \mbox{  \ for    $l_2\geq l_1$, \ and }\\
\Cc(l_1)\cap \Cc(l_2)= \Cc(\max\{l_1,l_2\}).
                               \end{array}
\end{equation}
The subcomplexes $\Cc(v)$ are $\Z[U_1,\ldots,U_r]$-submodules,  the induced  operators $U_i$
have homological degree $-2$ and are
homotopic to each other again. Moreover $U_i(A^-(l))\subset A^-(l+E_i)$.
The Heegaard-Floer link homology is defined as the homology of the associated graded pieces of $\Cc(l)$:
$$
\HFL^{-}(L,l):=H_{*}(\, (\gr\Cc)(l)\,), \ \ \mbox{where} \ (\gr\Cc)(l):=
\Cc(l)/\sum_{l'\geq  l}\Cc(l').
$$
\begin{remark}
At present, Heegaard Floer link homology is defined only for $\BF_2$ coefficients, hence, strictly speaking, all results of this section are valid only over $\BF_2$. Nevertheless, we believe that all the statements
are true over $\Z$ as well, but the cautious reader might take everywhere $\BF_2$ instead of $\Z$.
\end{remark}



\subsection{The connection with the local lattice cohomology}\label{ss:HFLloc}

In \cite[Theorem 6.1.3]{GorNem2015}  the following isomorphism  was proved.

\begin{theorem}\label{th:isoGor}
For any $l\in \Z^r$ let ${\rm HL}^-_*$ denote the local lattice cohomology
$H_*({\rm gr}_l\calL^-, {\rm gr}_l \partial _U)$, graded by the homological degree, cf. \ref{bek:2.22}.
Let ${\rm HFL}_*(L,l)$ be the Heegaard Floer link homology, graded by its homological (Maslow) grading.
Then $\HFL^-_*(L,l)$ and $ {\rm HL}^-_*(l)$ are isomorphic as graded\, $\Z$-modules.

In particular, $\HFL^-_*(L,l)$ has no $\Z$--torsion and $\HFL^-_*(L, l)=0$ whenever $l\not\in \calS$.
\end{theorem}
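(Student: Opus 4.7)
\medskip

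\noindent\textbf{Proof plan (sketch).}

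The plan is to reduce both sides to the same combinatorial complex built out of the Hilbert function $\hh$, and then match them. On the Heegaard Floer side, the first step I would carry out is to invoke the fact that for a plane curve singularity $(C,o)$ the link $L=L(C,o)\subset S^3$ is an $L$--space link, i.e.\ each large surgery on $L$ is an $L$--space. This is a known consequence of algebraicity (e.g.\ via iterated cabling of torus links, or via positivity of the Alexander polynomial combined with Hom--Liu/Gorsky--Némethi criteria). For $L$--space links the Manolescu--Ozsv\'ath surgery/mapping cone machinery gives an explicit description of $\mathrm{CFL}^-(L,l)$ up to quasi--isomorphism: for each $l$ the subcomplex $A^-(l)$ is quasi--isomorphic to $\Z[U]$ sitting in a single Maslov degree, and this degree is controlled by the $H$--function, which for a plane curve singularity coincides with the analytic Hilbert function $\hh(l)=\dim \calO/\calF(l)$ (this is a theorem of Gorsky--Némethi relating the $H$--function of an algebraic link to $\hh$).

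Next, I would use the inclusion structure \eqref{eq:INCL} of the Alexander subcomplexes $A^-(l)$ together with the $L$--space reduction above to compute the associated graded $(\mathrm{gr}\,\Cc)(l)=A^-(l)/\sum_i A^-(l+E_i)$ by a Koszul--type resolution indexed over subsets $I\subset \cV$. Concretely, the inclusion--exclusion of the subcomplexes $\{A^-(l+E_i)\}_{i\in\cV}$ inside $A^-(l)$ yields an $r$--dimensional cube--shaped complex whose vertex at $I$ is (quasi--isomorphic to) $\Z[U]$ placed in Maslov degree determined by $\hh(l+E_I)$, and whose edges from $I$ to $I\setminus\{k\}$ are multiplication by $U^{\hh(l+E_I)-\hh(l+E_{I\setminus\{k\}})}$ with the appropriate sign. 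This is exactly the chain complex $(\mathrm{gr}_l\,\calL^-, \mathrm{gr}_l\,\partial_U)$ defined in \ref{bek:2.22}, once one checks that the Maslov gradings on both sides match with the homological degree $\deg(U^m\square)=-2m-2\hh(\square)+\dim(\square)$ of \eqref{eq:szamdeg}.

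Given this chain--level identification, the graded $\Z$--module isomorphism $\HFL^-_*(L,l)\simeq \mathrm{HL}^-_*(l)$ is immediate. The torsion--freeness is then inherited from Theorem \ref{zlat}(2), which realises $\mathrm{HL}^-_*(l)$ as the homology of the projective hyperplane arrangement $\setP\cH'(l)$; this is torsion--free by Orlik--Solomon. The vanishing for $l\notin\calS$ follows either directly (the corresponding arrangement $\cH'(l)$ is empty because some $\calF(l+E_i)=\calF(l)$ for all $i$ kills the top vertex of the cube, making the complex acyclic), or via Theorem \ref{th:PP}(e)--(h) and Proposition \ref{prop:motProp}(c), which already pin down the support of $\bPE_1$ (equivalently, of the graded local lattice homology) to $\calS$.

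The step I expect to be the main obstacle is the matching of the Maslov/homological gradings in the second paragraph. Setting up the chain--level identification of \emph{vector spaces} is routine from the $L$--space property, but verifying that the Maslov grading on $(\mathrm{gr}\,\Cc)(l)$ provided by Heegaard Floer theory equals $-2\hh(l+E_I)+|I|$ on the summand labelled by $I\subset \cV$ requires the explicit formula of Ozsv\'ath--Szab\'o for the $d$--invariant shift on $A^-(l)$ in terms of the $H$--function, together with the conversion $w(l)=2\hh(l)-|l|$. Once this normalisation is in place, everything else (the differentials, the $U$--action triviality, the subset indexing) follows formally.
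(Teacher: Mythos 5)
The paper itself states this result only as a citation to \cite[Theorem~6.1.3]{GorNem2015} and does not reprove it, so there is no in-paper argument to compare against; but your plan is essentially the strategy used in that reference. The chain of ideas --- plane-curve links are $L$--space links, hence $H_*(\Cc(l))\cong\Z[U]$ concentrated in Maslov degree $-2H(l)$ with $H=\hh$; the associated graded $({\rm gr}\,\Cc)(l)$ is replaced by an iterated-cone/Koszul cube indexed by $I\subset\cV$ using the lattice identity $\Cc(l_1)\cap\Cc(l_2)=\Cc(\max\{l_1,l_2\})$; and after substituting $\Z[U]$ for each vertex the cube is literally $({\rm gr}_l\calL^-,{\rm gr}_l\partial_U)$ once the Maslov/cube degree shift is matched --- is how Gorsky--N\'emethi proceed, and your deduction of torsion-freeness (Orlik--Solomon via Theorem~\ref{zlat}(2)) and of the support statement is also correct. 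One small wording slip: you write ``some $\calF(l+E_i)=\calF(l)$ \emph{for all} $i$''; for $l\notin\calS$ what holds is that there \emph{exists} an $i$ with $\calF(l+E_i)=\calF(l)$, and then the identity $\calF(a)\cap\calF(b)=\calF(\max\{a,b\})$ forces $\hh(l+E_{I\cup\{i\}})=\hh(l+E_I)$ for every $I\not\ni i$, giving a contracting homotopy of the cube (equivalently $\cH(l)\subset\calF(l)\setminus\calF(l+E_i)=\emptyset$).
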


Theorem \ref{th:PP} has the following consequences.

Let us  fix $l\in\calS$ and assume that $\HFL^-_k(L,l)\not=0$. This means  ${\rm HL}^-_k(l)\not=0$. For the convenience of the reader let us write down again  the possible
bidegrees $(k-b,b)$ which might appear. Using parts {\it (a)-(b)} of Theorem \ref{th:PP}, in the isomorphism (\ref{eq:PP}) we have
\begin{equation}
        k  =-2n+2\hh(l)-2|l|+b\ \ \ \ \mbox{and} \ \ \
          w(l) =n-b.
\end{equation}
These identities identify both $n$ and $b$   in terms of $l$ and $k$:
\begin{equation}\label{eq:nb}
        n  =-|l|-k\ \ \ \ \mbox{and} \ \ \
        b  = -|l|-k-w(l)=-k-2\hh(l).
\end{equation}
Hence, with fixed $k$, there is only one bidegree $(k-b,b)$ for which ${\rm HL}^-_{k-b,b}(l)\not=0$ given by the second identity of (\ref{eq:nb}), namely  $b=-k-2\hh(l)\geq 0$. Moreover,  (\ref{eq:PP}) and (\ref{eq:HLFUJ}) transforms into the following isomorphism
\begin{corollary}
\begin{equation}\label{eq:lb1}
\HFL^-_{-2\hh(l)-b}(L,l)\simeq H_b(S_{w(l)+b}\cap\frX_{-l}, S_{w(l)+b}\cap\frX_{-l}\cap \frX_{-|l|-1}).
\end{equation}
In particular, the spaces $\{S_n\}_n$ (as cube--subcomplexes of  $\R_{\geq 0}^r$)
provide by their relative homologies all the Heegaard Floer link homologies. \end{corollary}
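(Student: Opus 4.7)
The plan is to combine three results that have already been established in the excerpt: Theorem \ref{th:isoGor} (identifying $\HFL^-$ with local lattice homology), Theorem \ref{zlat}\textit{(3)} (the bigrading constraint on local lattice homology), and Theorem \ref{th:PP}\textit{(a)--(b)} (identifying local lattice homology with relative homology of the filtered pieces of $S_n$). The corollary should follow formally by tracking the various gradings.

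First I would fix $l \in \calS$ and a homological index $k$, and apply Theorem \ref{th:isoGor} to obtain a graded $\Z$--module isomorphism
$$\HFL^-_k(L,l) \simeq {\rm HL}^-_k(l) = H_k({\rm gr}_l\calL^-, {\rm gr}_l\partial_U),$$
where the right-hand side is graded by the total (homological) degree coming from the bigrading ${\rm bdeg}(U^m\square) = (-2m - 2\hh(\square), \dim(\square))$. Next I would invoke Theorem \ref{zlat}\textit{(3)}: the only bidegrees $(a,b)$ with nontrivial contribution satisfy $a + 2b = -2\hh(l)$. Since the total degree is $a+b$, this linear relation uniquely determines the bidegree from the total degree: writing $k = a+b$ and solving, one gets $b = -k - 2\hh(l)$ and $a = 2k + 2\hh(l)$. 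Hence for each $k$ there is exactly one bidegree contributing, and ${\rm HL}^-_k(l) = {\rm HL}^-_{a,b}(l)$ for this specific $(a,b)$.

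Then I would apply Theorem \ref{th:PP}\textit{(a)}, which provides the isomorphism
$$H_b(S_n\cap\frX_{-l}, S_n\cap\frX_{-l}\cap\frX_{-|l|-1}) \simeq H_{-2n + 2\hh(l) - 2|l|,\, b}({\rm gr}_l\calL^-, {\rm gr}_l\partial_U),$$
and Theorem \ref{th:PP}\textit{(b)}, which says the left-hand side is nonzero only when $n = w(l) + b$. Substituting $n = w(l) + b = 2\hh(l) - |l| + b$ into the first coordinate of the bidegree gives $-2n + 2\hh(l) - 2|l| = -2\hh(l) - 2b = a$, exactly matching the bidegree isolated in the previous step. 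Reparametrizing $k = -2\hh(l) - b$ yields the claimed identity
$$\HFL^-_{-2\hh(l)-b}(L,l) \simeq H_b\bigl(S_{w(l)+b}\cap\frX_{-l},\, S_{w(l)+b}\cap\frX_{-l}\cap\frX_{-|l|-1}\bigr).$$

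There is no genuine obstacle here; the corollary is a pure bookkeeping exercise, the whole content sitting in the three prior theorems. The only delicate point is verifying consistently that the homological grading used in $\HFL^-$ (the Maslov grading) matches the total degree of the bigrading on ${\rm gr}_l\calL^-$ used in Theorem \ref{th:isoGor} --- once this is confirmed (as it is by the statement of Theorem \ref{th:isoGor}), the sign conventions and offsets all line up, and no further argument is required.
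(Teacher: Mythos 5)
Your argument is correct and is essentially the same as the paper's: both invoke Theorem~\ref{th:isoGor} to pass from $\HFL^-$ to local lattice homology, use the bidegree constraint (your Theorem~\ref{zlat}\textit{(3)}, which is the source of Theorem~\ref{th:PP}\textit{(b)}) to pin down the single contributing bidegree, and then apply the isomorphism (\ref{eq:PP}) from Theorem~\ref{th:PP}\textit{(a)} to identify it with the relative homology of the filtered piece of $S_n$. The bookkeeping of gradings matches the paper's computation leading to (\ref{eq:nb}).
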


E.g., in the case of Example \ref{ss:33}, let us fix the lattice point $l=(2,2,2)$. Then $|l|=6$, $w(l)=0$ and $-2\hh(l)=-6$.
The coefficient of ${\bf T}^l=T_1^2T_2^2T_3^2$ in ${\bf PE}_1({\bf T}, Q,h)$ is  $(1+Qh)^2
=Q^0h^0+2Q^1h^1+Q^2H^2$, cf. (\ref{eq:TTT}).
This means that at the lattice point $l=(2,2,2)$, via the relative homology `at $l$',
from $S_0$ (i.e. from the exponents and coefficient of  $Q^0h^0$) we read that ${\rm rank}\, \HFL^-_{-6}(L,l)=1$. Next, from $S_1$ (i.e. from $2QH$) we read that
${\rm rank}\, \HFL^-_{-6-1}(L,l)=2$, finally from $S_2$ that ${\rm rank}\, \HFL^-_{-6-2}(L,l)=1$. All other homologies
$\HFL^-_{-6-b}(L,l)$, $b\not=0,1,2$,
are zero. (It is instructive  to compare the statements with the pictures of the spaces $\{S_n\}_n$ as well.)

Next, using the Poincar\'e polynomial identity (\ref{hfminus}) together with Theorem \ref{th:PP} we obtain

\begin{corollary}
\begin{equation*} \begin{split}
P^m(\bt;q)&=\sum_l\sum _k (-1)^k \cdot
  {\rm rank} \HFL^-_{-2\hh(l)-k}(L,l)\cdot\bt^l\, q^{\hh(l)+k},\\
{\bf PE}_1({\bf T}, Q, h)&=\sum_l \
\sum_k\  {\rm rank} \HFL^-_{-2\hh(l)-k}(L,l)\cdot {\bf T}^lQ^{w(l)+k}h^{k}\\ &=
\sum_l \
\sum_n\  {\rm rank} \HFL^-_{-n-|l|}(L,l)\cdot {\bf T}^lQ^nh^{n-w(l)},\\
 {\bf PE}_1({\bf T}, Q=1, h=1)&=\sum_l \sum_k (-1)^k\pp^m _{l,k}\cdot {\bf T}^l=\sum_l \
\big(\sum_k\  {\rm rank} \HFL^-_k(L,l)\,\big)\cdot {\bf T}^l,\\
 {\bf PE}_1({\bf T}, Q=1, h=-1)&=\sum_l \sum_k \pp^m _{l,k}\cdot {\bf T}^l=\sum_l \
\chi\big(\HFL^-_*(L,l)\,\big)\cdot {\bf T}^l= P({\bf T}).
\end{split}\end{equation*}
\end{corollary}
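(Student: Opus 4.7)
The plan is to deduce each identity by combining three ingredients that are already in the paper: the isomorphism of Theorem \ref{th:isoGor} identifying $\HFL^-_*(L,l)$ with the local lattice homology $\mathrm{HL}^-_*(l)$; the Poincaré polynomial identity (\ref{hfminus}), $P^{\calL^-}_l(-q^{-1})=q^{\hh(l)}\pp^m_l(q)$; and the two rewrites of $\bPE_1$ from Theorem \ref{th:PP}(e)(f) in terms of the coefficients $\pp^m_{l,k}$. No new geometric input is needed beyond these.

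First I would prove the formula for $P^m(\bt;q)$. By Theorem \ref{zlat}(3), the module $\mathrm{HL}^-_p(l)$ is nonzero only for $p=-2\hh(l)-k$ with $k\in\Z_{\geq 0}$. Expanding $P^{\calL^-}_l(s)=\sum_p\mathrm{rank}\,\mathrm{HL}^-_p(l)\cdot s^p$ at $s=-q^{-1}$ and using that $-2\hh(l)$ is even (so $(-1)^{-2\hh(l)-k}=(-1)^k$), identity (\ref{hfminus}) becomes
\[
\pp^m_l(q)=\sum_{k\geq 0}(-1)^k\,\mathrm{rank}\,\mathrm{HL}^-_{-2\hh(l)-k}(l)\cdot q^{\hh(l)+k}.
\]
Theorem \ref{th:isoGor} then replaces the lattice rank by $\mathrm{rank}\,\HFL^-_{-2\hh(l)-k}(L,l)$, yielding the first displayed identity after summing over $l$.

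Comparing this expansion with Theorem \ref{th:PP}(e) and using uniqueness of the expansion $\pp^m_l(q)=\sum_k\pp^m_{l,k}q^{\hh(l)+k}$ gives the key dictionary
\[
\pp^m_{l,k}=(-1)^k\,\mathrm{rank}\,\HFL^-_{-2\hh(l)-k}(L,l).
\]
Substituting this into Theorem \ref{th:PP}(f) produces $\bPE_1({\bf T},Q,h)=\sum_{l,k}\mathrm{rank}\,\HFL^-_{-2\hh(l)-k}(L,l)\cdot{\bf T}^l Q^{w(l)+k}h^k$, which is the second identity. The reindexing $n:=w(l)+k$, combined with $w(l)=2\hh(l)-|l|$ so that $-n-|l|=-2\hh(l)-k$, converts this into the form summed over $n$.

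The two specializations at $Q=1$ are then immediate from the dictionary. At $h=1$ one gets $\sum_l\big(\sum_k\mathrm{rank}\,\HFL^-_{-2\hh(l)-k}(L,l)\big){\bf T}^l$; reindexing the inner sum over $j=-2\hh(l)-k$ rewrites it as $\sum_k\mathrm{rank}\,\HFL^-_k(L,l)$, and the dictionary simultaneously identifies the sum with $\sum_k(-1)^k\pp^m_{l,k}$. At $h=-1$ the sign $(-1)^k$ cancels the sign in the dictionary, giving $\sum_k\pp^m_{l,k}=\pp^m_l(1)=\pp_l$ by Proposition \ref{prop:motProp}(a), and simultaneously $\sum_k(-1)^k\mathrm{rank}\,\HFL^-_{-2\hh(l)-k}(L,l)=\chi(\HFL^-_*(L,l))$ (the parity of $2\hh(l)$ again being harmless). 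There is no real obstacle; the only thing to watch is the sign bookkeeping and the fact that the local lattice homology is concentrated in the single bidegree selected by Theorem \ref{zlat}(3), which is precisely what makes the expansion in powers of $q$ match the expansion in homological degrees of $\HFL^-$.
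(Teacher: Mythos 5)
Your argument is correct and takes essentially the same route the paper intends: combine the isomorphism $\HFL^-_*(L,l)\simeq \mathrm{HL}^-_*(l)$ of Theorem \ref{th:isoGor} with the identity (\ref{hfminus}), the non-vanishing constraint of Theorem \ref{zlat}(3), and the reformulations of $\bPE_1$ in Theorem \ref{th:PP}(e)--(f), then reindex by $n=w(l)+k$ and specialize. The only mild redundancy is that you re-derive the dictionary $(-1)^k\pp^m_{l,k}=\mathrm{rank}\,\HFL^-_{-2\hh(l)-k}(L,l)$, which is already recorded as equation (\ref{eq:HLFUJ}) in Theorem \ref{th:PP}(e).
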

The last identity is equivalent via  Theorem \ref{Poincare vs Alexander} by a result of Ozsv\'ath and Szab\'o from
\cite[Proposition 9.2]{OSzHol}, which determines
 the generating function $\sum_{l}\
\chi(\HFL^-(L,l))\cdot \bt^l$
of the  Euler characteristic of the Heegaard Floer link homology
as
$\Delta(\bt)$ if  $r>1$, and
$\Delta(t)/1-t$ if  $r=1$.

Next, we compare $\{\HFL^-_*(L,l)\}_l$ with the spectral sequences.

First of all, we wish to emphasize that the spectral sequence of this note is {\it not}
the spectral sequence constructed in the $HFL^-$--theory. In that theory, the $\infty$--page is the graded
$HF^-(S^3)=\Z[U]$, while in our case the $\infty$--page  is the graded $\bH_*(C,o)$.

Let us write (\ref{eq:lb1}) in the form
\begin{equation}\label{eq:lb2}
\HFL^-_{-n-|l|}(L,l)\simeq H_b(S_n\cap\frX_{-l}, S_n\cap\frX_{-l}\cap \frX_{-|l|-1}),
\end{equation}
where $b=n-w(l)$. For any fixed $d$, summation over $\{l\,:\, |l|=d\}$ gives for any $n,\, b$ and $d$:
\begin{equation}\label{eq:lb3}
\bigoplus_{|l|=d,\ w(l)=n-b}
\HFL^-_{-n-|l|}(L,l)\simeq \bigoplus_{|l|=d}\,
H_b(S_n\cap\frX_{-l}, S_n\cap\frX_{-l}\cap \frX_{-|l|-1})= (E^1_{-d, b+d})_n.
\end{equation}
In particular, for any fixed $n$, the spectral sequence associated with $S_n$
uses and capture only the specially chosen summand $\oplus_{l}{\rm HFL}^-_{-n-|l|}(L,l)$ of
$\oplus _l{\rm HFL^-}_*(L,l)$. This is a partition of $\oplus _l{\rm HFL^-}_*(L,l)$ indexed by $n$.
Each $\oplus_{l}{\rm HFL}^-_{-n-|l|}(L,l)$, interpreted as an $E^1$--term, converges to
$({\rm Gr}^F_*\bH_*)_{-2n}$. (This is not the spectral sequence of the Link Heegaard Floer theory,
which converges to $HF^-(S^3)$, though the entries --- but not
the differentials --- of the first page can be identified.)

This partition of $\{\oplus_{l}{\rm HFL}^-_{-n-|l|}(L,l)\}_{n\geq m_w}$ of
$\oplus _l{\rm HFL^-}_*(L,l)$ can be refined by the vertices of the  graded root.
Indeed, if we replace in (\ref{eq:lb2}) $S_n$ by $\sqcup_v S_n^v$,  then each
$\oplus_{l}{\rm HFL}^-_{-n-|l|}(L,l)$ for fixed $n\geq m_w$ decomposes into a direct sum
\begin{equation}\label{eq:v}
\oplus _{v\in\cV(\mathfrak{R}): w_0(v)=n}\ {\rm HFL}^-_{-n-|l|}(L,l)^v,\end{equation}
providing a direct sum decomposition  of the link Heegaard Floer homology $\oplus _l{\rm HFL^-}_*(L,l)$
indexed by $\cV(\mathfrak{R})$.
(The author does not know whether  this direct sum decomposition can be realized via the HFL theory.)

Let us  make in (\ref{eq:lb3}) a summation over $d$. For fixed $n$ and $b$ we obtain
\begin{equation}\label{eq:lb4}
\bigoplus_{l\,:\, \, w(l)=n-b}
\HFL^-_{-n-|l|}(L,l)\simeq \bigoplus_{d}\,
(E^1_{-d, b+d})_n.
\end{equation}
Since $\rank (E^1_{-d, b+d})_n\geq \rank (E^\infty_{-d, b+d})_n$, we get the following lower bound
for the Heegaard Floer link homology in terms of the lattice homology of $(C,o)$:
\begin{corollary}
(a) For any fixed $d$, $n$ and $b$:
\begin{equation}\label{eq:lb3b}
\sum_{|l|=d,\ w(l)=n-b}
\rank\ \HFL^-_{-n-|l|}(L,l)\geq {\rm rank}\ {\rm Gr}^F_{-d}\,(\bH_b(C,o))_{-2n}.
\end{equation}

(b) For any fixed  $n$ and $b$:
\begin{equation}\label{eq:lb4b}
\sum_{l\, :\, \,  w(l)=n-b}
\rank\ \HFL^-_{-n-|l|}(L,l)=
\sum_{l\, :\, \,  w(l)=n-b}
\rank\ \HFL^-_{-2\hh(l)-b}(L,l)
\geq \rank\,  (\bH_b(C,o))_{-2n}.
\end{equation}
(In (\ref{eq:lb4b}) the left hand side also equals
  the coefficient of $Q^nh^b$ in $PE_1(T=1, Q,h)$.)

\end{corollary}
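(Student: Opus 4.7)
The plan is to derive both parts as direct consequences of the identification (\ref{eq:lb3}) established via Theorem~\ref{th:PP}, combined with the standard fact that ranks can only decrease along a spectral sequence.

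For part (a), I would take equation (\ref{eq:lb3}) and pass to $\Z$-ranks. Since (by Theorem~\ref{th:PP}(b) / the (non)vanishing result stated just before the corollary) the summand $H_b(S_n\cap\frX_{-l}, S_n\cap\frX_{-l}\cap \frX_{-|l|-1})$ is automatically zero unless $n=w(l)+b$, the right-hand side of (\ref{eq:lb3}) is already tacitly supported on $\{|l|=d,\, w(l)=n-b\}$; thus
\[
\sum_{|l|=d,\, w(l)=n-b}\rank\HFL^-_{-n-|l|}(L,l)=\rank (E^1_{-d,b+d})_n.
\]
Now invoke the general principle $PE_1\geq PE_2\geq\cdots\geq PE_\infty$ recorded in subsection~\ref{ss:ss} (applied coefficient by coefficient), which gives $\rank (E^1_{-d,b+d})_n\geq \rank (E^\infty_{-d,b+d})_n$. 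By the very definition of $E^\infty$ this last rank equals $\rank\bigl({\rm Gr}^F_{-d}\,\bH_b(C,o)\bigr)_{-2n}$, which completes (a).

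For part (b), the stated equality of the two sums is purely an index identity: on the locus $w(l)=n-b$ one has $n+|l|=w(l)+b+|l|=2\hh(l)+b$, so the homological degree $-n-|l|$ coincides with $-2\hh(l)-b$. For the inequality, I would sum the inequality of (a) over all $d\in\Z_{\geq 0}$. The left-hand side telescopes to the full sum $\sum_{l:\,w(l)=n-b}\rank\HFL^-_{-n-|l|}(L,l)$, since each nonzero $l$ contributes exactly at $d=|l|$. On the right-hand side, the level filtration $\{F_{-d}\bH_b(\frX,w)\}_d$ restricted to the homogeneous piece of weight $-2n$ is finite: indeed, by Theorem~\ref{cor:EUcurves} the space $S_n$ is contained in the finite rectangle $R(0,c')$, so only finitely many $d$ contribute. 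Hence
\[
\sum_{d}\rank\bigl({\rm Gr}^F_{-d}\,\bH_b(C,o)\bigr)_{-2n}=\rank\bigl(\bH_b(C,o)\bigr)_{-2n},
\]
since the rank of a finitely filtered finitely generated abelian group is the sum of the ranks of its graded pieces. Combining these yields the claimed lower bound.

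The argument is essentially book-keeping, so there is no serious obstacle; the only subtleties to handle with care are (i) checking that the implicit support condition $w(l)=n-b$ on the right-hand side of (\ref{eq:lb3}) matches the explicit one on the left, which is immediate from Theorem~\ref{th:PP}(b), and (ii) justifying the finiteness of the level filtration on each homogeneous piece, which follows from the contractibility/finiteness statements of Theorem~\ref{cor:EUcurves}. The parenthetical remark after (\ref{eq:lb4b}) identifying the left-hand side with the $Q^nh^b$-coefficient of $PE_1(T=1,Q,h)$ is then immediate from the definition of $PE_1$ and the specialization $T=1$.
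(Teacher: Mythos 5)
Your argument is correct and follows the same route the paper sketches: identify $\sum_{|l|=d,\,w(l)=n-b}\rank\HFL^-_{-n-|l|}(L,l)$ with $\rank(E^1_{-d,b+d})_n$ via (\ref{eq:lb3}) and Theorem~\ref{th:PP}(b), then use $\rank E^1\geq \rank E^\infty = \rank{\rm Gr}^F$, and for (b) sum over $d$ using additivity of rank over the finite filtration. One small slip: Theorem~\ref{cor:EUcurves} asserts that $S_n\cap R(0,c')\hookrightarrow S_n$ is a homotopy equivalence, not that $S_n\subset R(0,c')$ (which can fail); the finiteness you need is instead the compactness of each $S_n$, which holds because $w(l)\to\infty$ as $|l|\to\infty$ and is already noted in subsection~\ref{ss:ss}.
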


\begin{example}
(\ref{eq:lb4b}) for $b=0$ reads as
\begin{equation}\label{eq:lb5}
\sum_{l\, :\, \,  w(l)=n}
\rank\ \HFL^-_{-2\hh(l)}(L,l)
\geq \rank\,  (\bH_0(C,o))_{-2n}.
\end{equation}
Note that for  $n\geq 0$ we have $\rank\,
(\bH_0(C,o))_{-2n}\geq 1$ (see e.g.  Proposition \ref{prop:infty}).

Assume that $\bH^{\geq 1}(C,o)=0$. Then (\ref{eq:lb5}) together with  Proposition \ref{prop:infty} {\it (d)} gives
$$\sum_{n<0}\, \sum_{l\,:\, w(l)=n}
\rank\ \HFL^-_{-2\hh(l)}(L,l)\, +\,
\sum_{n\geq 0}\, \sum_{l\,:\, w(l)=n}
\big(\rank\ \HFL^-_{-2\hh(l)}(L,l)-1\big)\, \geq \delta(C,o).$$
\end{example}

\section{Other level filtrations}

In section \ref{s:levfiltr} the filtration of $S_n$ was induced by the filtration $\{\frX_{-d}\}_d$ of $\frX$, where
$\frX_{-d}:=\{\cup(l,I)\,:\, |l|\geq d\}$.
However, there are infinitely many similar level filtrations which might serve equally well and
 can be considered and studied.
 Indeed, fix e.g. an integral nonzero vector $a=(a_1, \ldots, a_r)\in\Z^r$, and set $\frX^{(a)}_{-d}:=
\{\cup(l,I)\,:\, \sum_ia_il_i\geq d\}$. Then  for each fixed $n$, $\{S_n\cap \frX^{(a)}_{-d}\}_d$ is an increasing finite filtration of
$S_n$. In particular (by taking the very same weight function $w$)
 it induces a homological spectral sequence converging to the lattice homology.
That is,  the $\infty$--page is the graded ${\rm Gr}^F_*\bH_*$, where both $F_{*}\bH_*(\frX,w)$ and
 ${\rm Gr}^F_*\bH_*$  depend on the choice of $a$.
 The first pages can also be identified with certain local lattice homologies.

 For example, assume that $a\in (\Z_{>0})^r$.  Then, for each fixed $n$, the first pages
 $ (E^1_{-d,q})_n^{(a)}= H_{-d+q}(S_n\cap \frX^{(a)}_{-d}, S_n\cap \frX^{(a)}_{-d-1},\Z)$
 has a direct sum decomposition
\begin{equation*}
 (E^1_{-d,q})_{n}^{(a)}= \bigoplus_{l\in\Z^r_{\geq 0},\, \sum_ia_il_i=d}\
 (E^1_{-l,q})_{n}^{(a)}, \ \mbox{where} \ (E^1_{-l,q})_{n}^{(a)}:=
 H_{-d+q}(S_n\cap \frX_{-l}, S_n\cap \frX_{-l}\cap \frX^{(a)}_{-d-1},\Z)\end{equation*}
Note that for any fixed $l\in(\Z_{\geq 0})^r$ the modules $(E^1_{-l,q})_{n}^{(a)}$ are $a$--independent
(i.e. they are the terms of the  local lattice homologies considered in the previous sections),
however in  $(E^1_{-d,q})_{n}^{(a)}$ we specially choose the summation set according to the hyperplane equation
$ \sum_ia_il_i=d$.

In particular, for a plane curve singularity, for every $a\in (\Z_{>0})^r$ we get a spectral sequence whose first pages
consists of the `reorganized  packages' of HFL$^-_*$, and it converges to $\bH_*(C,o)$.

The construction can   be compared with the definition of the homologies tHFK (and upsilon invariant) from
\cite{OSZStu}.

\end{document}